\numberwithin{equation}{section}
\crefname{section}{Section}{Sections}
\crefname{subsection}{\S}{\S\S}
\theoremstyle{plain}
\newtheorem{lemma}{Lemma}[section]
\newtheorem{proposition}[lemma]{Proposition}
\newtheorem{corollary}[lemma]{Corollary}
\newtheorem{theorem}[lemma]{Theorem}
\newtheorem{conjecture}[lemma]{Conjecture}
\newtheorem{question}[lemma]{Question}
\theoremstyle{nonumberplain}
\theoremstyle{plain}
\newtheorem{definition}[lemma]{Definition}
\newtheorem{example}[lemma]{Example}
\newtheorem{remark}[lemma]{Remark}
\crefname{definition}{definition}{definitions}
\crefname{ex}{example}{examples}
\crefname{remark}{remark}{remarks}
\crefname{convention}{convention}{conventions}
\crefname{claim}{claim}{claims}
\crefname{conjecture}{conjecture}{conjectures}
\crefname{lemma}{lemma}{lemmas}
\crefname{proposition}{proposition}{propositions}
\crefname{question}{question}{questions}
\crefname{corollary}{corollary}{corollaries}
\crefname{theorem}{theorem}{theorems}
\crefname{assumption}{assumption}{Assumptions}
\crefname{equation}{}{}
\theoremstyle{nonumberplain}
\newtheorem{proof}{Proof}
\newcommand\bC{{\mathbb C}}
\newcommand\bQ{{\mathbb Q}}
\newcommand\bR{{\mathbb R}}
\newcommand\bS{{\mathbb S}}
\newcommand\bT{{\mathbb T}}
\newcommand\bZ{{\mathbb Z}}
\newcommand\cH{{\mathcal H}}
\DeclareMathOperator{\id}{id}
\def\polhk#1{\setbox0=\hbox{#1}{\ooalign{\hidewidth
    \lower1.5ex\hbox{`}\hidewidth\crcr\unhbox0}}}
\newcommand{\trivdim}[2]{\mathrm{dim}^{#2}_{\mathrm{triv}}(#1)}
\newcommand{\join}[3]{#1 \circledast^{#2} #3}
\newcommand{\qu}[1]{\lq\lq #1\rq\rq\hspace{0pt}}
\newcommand{\bes}{\begin{equation*}}
\newcommand{\ees}{\end{equation*}}
\newcommand{\be}{\begin{equation}}
\newcommand{\ee}{\end{equation}}
\begin{document}

\title{Invariants in Noncommutative Dynamics}
\author{Alexandru Chirvasitu\footnote{University at Buffalo, \url{achirvas@buffalo.edu} \hspace{2 pt} Partial support through NSF grants DMS-1565226 and DMS-1801011.}, Benjamin Passer\footnote{University of Waterloo, \url{bpasser@uwaterloo.ca} \hspace{2 pt} Partial support from a Zuckerman Fellowship at Technion-Israel Institute of Technology.}}

\maketitle

\begin{abstract}
When a compact quantum group $H$ coacts freely on unital $C^*$-algebras $A$ and $B$, the existence of equivariant maps $A \to B$ may often be ruled out due to the incompatibility of some invariant. We examine the limitations of using invariants, both concretely and abstractly, to resolve the noncommutative Borsuk-Ulam conjectures of Baum-D\polhk{a}browski-Hajac. Among our results, we find that for certain finite-dimensional $H$, there can be no well-behaved invariant which solves the Type 1 conjecture for all free coactions of $H$. This claim is in stark contrast to the case when $H$ is finite-dimensional and abelian. In the same vein, it is possible for all iterated joins of $H$ to be cleft as comodules over the Hopf algebra associated to $H$. Finally, two commonly used invariants, the local-triviality dimension and the spectral count, may both change in a $\theta$-deformation procedure.
\end{abstract}

\setlength{\parskip}{.15 cm}


\section{Introduction}

The Borsuk-Ulam theorem, a landmark result concerning the antipodal action of the two-element group $\bZ/2\bZ$ on spheres $\bS^n$, severely restricts the existence of equivariant maps. Because the antipodal action on $\bS^n$ is generated by the order two homeomorphism $x \mapsto -x$, equivariant maps $f: \bS^n \to \bS^m$ by definition satisfy $f(-x) = -f(x)$, so they are also commonly called \textit{odd} functions.

\begin{theorem}[Borsuk-Ulam]
There is no odd function $f: \bS^n \to \bS^m$ if $m < n$.
\end{theorem}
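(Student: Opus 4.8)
The plan is to pass from the spheres to their quotients by the antipodal involutions and extract a contradiction from the $\bZ/2\bZ$-cohomology rings of real projective spaces. An odd map $f\colon\bS^n\to\bS^m$ is precisely a map intertwining the two antipodal $\bZ/2\bZ$-actions, and since these actions are free it descends to a continuous map $\ol f\colon\mathbb{RP}^n\to\mathbb{RP}^m$ of orbit spaces, fitting into a commuting square with the two double covers $\bS^k\to\mathbb{RP}^k$. First I would make this descent precise and record the crucial consequence of equivariance: the pullback along $\ol f$ of the double cover $\bS^m\to\mathbb{RP}^m$ is isomorphic, as a cover of $\mathbb{RP}^n$, to $\bS^n\to\mathbb{RP}^n$. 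Freeness is what guarantees that both orbit maps are honest connected double covers, so that the lift $z\mapsto([z],f(z))$ identifies $\bS^n$ with the pulled-back total space rather than collapsing it.

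Next I would invoke the classical computation $H^\bullet(\mathbb{RP}^k;\bZ/2\bZ)\cong(\bZ/2\bZ)[t_k]/(t_k^{k+1})$, where $t_k$ is the degree-one generator classifying the connected double cover $\bS^k\to\mathbb{RP}^k$. The covering-space statement from the first step is exactly the assertion that $\ol f^*$ sends the class classifying $\bS^m\to\mathbb{RP}^m$ to the one classifying $\bS^n\to\mathbb{RP}^n$, i.e.\ $\ol f^*(t_m)=t_n$. Because $\ol f^*$ is a homomorphism of graded rings, it then carries the product $t_m^{\,n}$ to $t_n^{\,n}$.

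To conclude I would compare the two sides degree by degree. Since $n>m$, we have $n\ge m+1$ and hence $t_m^{\,n}=0$ in $H^n(\mathbb{RP}^m;\bZ/2\bZ)$; applying $\ol f^*$ gives $t_n^{\,n}=\ol f^*(t_m^{\,n})=0$. But $t_n^{\,n}$ is the nonzero top class of $H^n(\mathbb{RP}^n;\bZ/2\bZ)$, a contradiction. (The degenerate case $m=0$ is even simpler, as no continuous odd map from the connected space $\bS^n$ to the two-point space $\bS^0$ can exist.) Hence there is no odd $f\colon\bS^n\to\bS^m$ when $m<n$.

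The main obstacle is the very first step---verifying that $\ol f^*$ is nonzero on degree-one cohomology, equivalently that $\ol f^*(t_m)=t_n$. Everything afterward is a formal consequence of the cup-product structure, so the genuine content lies in the naturality argument identifying the pulled-back cover with $\bS^n\to\mathbb{RP}^n$; this is the precise point at which freeness of the two involutions is indispensable, since without it the orbit maps need not be double covers and equivariance would not force $\ol f$ to act nontrivially on $\pi_1$.
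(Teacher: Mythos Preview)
The paper does not actually prove this statement: the Borsuk--Ulam theorem is quoted in the introduction as a classical ``landmark result'' to motivate the noncommutative conjectures that follow, and no argument for it is given. So there is no proof in the paper to compare against.

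Your argument is correct and is one of the standard proofs. The only point worth tightening is the identification $\ol f^*(t_m)=t_n$: rather than appealing to pullbacks of covers, it is slightly cleaner to note that $t_k\in H^1(\mathbb{RP}^k;\bZ/2\bZ)\cong\mathrm{Hom}(\pi_1(\mathbb{RP}^k),\bZ/2\bZ)$ is the nontrivial homomorphism, and that $\ol f_*$ on $\pi_1$ is nontrivial because the lift $f$ of $\ol f$ to the universal covers sends antipodal points to antipodal (hence distinct) points. Either way the substance is the same, and the rest of your cup-product argument is exactly the standard route.
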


Viewing the Borsuk-Ulam theorem through the Gelfand-Naimark correspondence $X \leftrightarrow C(X)$ between compact Hausdorff spaces and commutative unital $C^*$-algebras produces an equivalent $C^*$-algebraic statement. If $\bZ/2\bZ$ acts on $C(\bS^n)$ via $f(x) \mapsto f(-x)$, then there is no equivariant unital $*$-homomorphism $\phi: C(\bS^m) \to C(\bS^n)$ if $m < n$. Note in particular that the roles of domain and codomain have switched, as the Gelfand-Naimark correspondence is contravariant.

Just as the Borsuk-Ulam theorem may be generalized to larger classes of (para)compact spaces and groups acting on these spaces \cite{dold, volovikovindex}, so too does the $C^*$-algebraic formulation open itself up to broader interpretation \cite{BDH, dabsusp, bentheta, taghavi, qdeformed}.  The perspective taken in \cite{BDH} is that the sphere $\bS^{n}$ may be written as the topological join $(\bZ/2\bZ)^{*n+1}$ of $n+1$ copies of the group $\bZ/2\bZ$, where the antipodal action $x \mapsto -x$ is just the diagonal action of $\bZ/2\bZ$ on its iterated joins. Thus the Borsuk-Ulam theorem may be written as
\bes
\nexists \text{ continuous equivariant map } \hspace{.1 in} (\bZ/2\bZ)^{*n+1} \to (\bZ/2\bZ)^{*n}.
\ees
The above was immediately generalized to many more spaces and groups, as in the following topological Borsuk-Ulam conjecture.

\begin{conjecture}[Baum-D\polhk{a}browski-Hajac, \cite{BDH}]\label{conj:top}
Let $X$ be a compact Hausdorff space with a free action of a nontrivial compact group $G$, and equip the join $X * G$ with the diagonal action. Then there is no equivariant map $X * G \to X$.
\end{conjecture}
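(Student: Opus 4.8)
The natural line of attack is the one standard for Borsuk--Ulam phenomena: produce an invariant $\nu$ of free $G$-spaces that is \emph{monotone} along equivariant maps, and then show that it \emph{strictly increases} upon joining with $G$. To each free $G$-space $Y$ one attaches the classifying map $Y/G\to BG$ of the principal bundle $Y\to Y/G$ and measures its ``height''; the most flexible packaging is the Fadell--Husseini index
\[
\mathrm{Ind}_G(Y)\;=\;\ker\bigl(H^*(BG)\to H^*_G(Y)\bigr)\ \trianglelefteq\ H^*(BG),
\]
computed in coefficients adapted to $G$. An equivariant map $f\colon Y\to Z$ induces $H^*_G(Z)\to H^*_G(Y)$, hence the reverse containment $\mathrm{Ind}_G(Z)\subseteq\mathrm{Ind}_G(Y)$. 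Applied to a hypothetical equivariant $X*G\to X$, this forces $\mathrm{Ind}_G(X)\subseteq\mathrm{Ind}_G(X*G)$, so the whole problem reduces to proving the \emph{strict} reverse inclusion $\mathrm{Ind}_G(X*G)\subsetneq\mathrm{Ind}_G(X)$.

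First I would record the join mechanics. The equivariant inclusion $X\hookrightarrow X*G$ already gives $\mathrm{Ind}_G(X*G)\subseteq\mathrm{Ind}_G(X)$, while the index-of-a-join formula supplies $\mathrm{Ind}_G(X)\cdot\mathrm{Ind}_G(G)\subseteq\mathrm{Ind}_G(X*G)$. Since $G$ acts freely and transitively on itself, its Borel construction is contractible, $H^*_G(G)=H^*(\mathrm{pt})$, and $\mathrm{Ind}_G(G)$ is the augmentation ideal $I$ of positive-degree classes; thus joining with $G$ multiplies the index by $I$. The point is that $\mathrm{Ind}_G(Y)\subseteq I$ holds for every nonempty $Y$, so the desired strictness is \emph{not} formal: one must exhibit a genuine index class of $X$ that becomes nonzero in $H^*_G(X*G)$, i.e.\ that survives the join. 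For $G=\bZ/2$ this is the textbook cup-length computation: $H^*(B\bZ/2;\bZ/2)=\bZ/2[w]$, a free $Y$ has $\mathrm{Ind}=(w^{k+1})$ with $k=\mathrm{ht}(Y)$, and joining raises $k$ by one, recovering $w^{n}\neq0=w^{n+1}$ on $\bR P^{n}=\bS^{n}/(\bZ/2)$.

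To pass to general $G$ I would reduce the structure group: a nontrivial compact \emph{Lie} group contains either a circle or a cyclic subgroup $\bZ/p$, and restricting the free $G$-action to such a $K$ leaves it free while replacing $H^*(BG)$ by the tractable $H^*(BK)$ --- a (truncated) polynomial ring on one generator in which the required degree count is transparent; arbitrary compact Hausdorff $X$ would be handled via \v{C}ech/Alexander--Spanier cohomology, avoiding any CW hypothesis. The main obstacle is exactly where this reduction breaks: general compact groups need not be Lie groups, and objects such as solenoids or the $p$-adic integers $\bZ_p$ contain neither a circle nor a finite subgroup, so they offer no characteristic class to iterate. For these one is forced to approximate $G$ by its Lie quotients and pass to an inverse limit, and it is precisely the \emph{uniform} control of the strict increment $\mathrm{Ind}_G(X*G)\subsetneq\mathrm{Ind}_G(X)$ across this limit --- for arbitrary free actions on arbitrary compact Hausdorff $X$ --- that I expect to resist the method. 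Accordingly I would anticipate a clean proof only when $G$ carries a nontrivial circle or finite-cyclic subgroup detected in $H^*_G(X)$, with the torsion-free, non-Lie case remaining the genuine heart of the open problem.
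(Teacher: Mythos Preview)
The statement you are attempting to prove is a \emph{conjecture}: the paper does not prove it, and indeed states explicitly that the Type~1 conjecture (equivalent to this one in the classical case) ``remains open in full generality.'' So there is no proof in the paper to compare against; what the paper offers instead is a detailed discussion of exactly the obstruction you have put your finger on.

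Your diagnosis is accurate. The Fadell--Husseini/cohomological-index approach you outline is essentially the mechanism behind the known partial results, and you correctly isolate the failure mode: once $G$ has no nontrivial finite or circle subgroup --- the paradigm being $G=\bZ_p$ --- there is no characteristic class in $H^*(BG)$ whose powers give a strictly increasing filtration, and the inverse-limit argument over Lie quotients does not yield uniform control of the increment. The paper confirms this reduction: citing \cite{alexbenjoin}, it observes that for classical $H=C(G)$ one may assume $G$ is the $p$-adic integer group $\bZ_p$. It then exhibits concrete evidence that monotone invariants break down there: \Cref{cor:padiceigencount} shows the spectral count of $C(\bZ_p^{*m})$ is bounded by $1$ for \emph{every} $m$, so that invariant fails property~(I) of a weak index on the nose. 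The local-triviality dimension does better --- it is unbounded on the iterated joins $\bZ_p^{*m}$ --- but whether every free $\bZ_p$-action has finite local-triviality dimension is unknown, and the paper even produces candidate pathological inverse-limit spaces following \Cref{thm:nearmiss}. The closing remark of that discussion ties the remaining case to the Hilbert--Smith conjecture, which is a fair calibration of its difficulty.

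In short: your proposal is not a proof and you know it is not; the gap you name (torsion-free non-Lie $G$, specifically $\bZ_p$) is precisely the open case, and the paper's own contribution is to sharpen our understanding of why index-style invariants cannot close it.
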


All objects in \Cref{conj:top} can be generalized into the $C^*$-algebraic setting. An advantage of this perspective is that one may consider coactions of compact \textit{quantum} groups on $C^*$-algebras which are not necessarily commutative. If $A$ is a unital $C^*$-algebra which admits a coaction $\delta$ of a compact quantum group $(H, \Delta)$, then the noncommutative join procedure of \cite{BDH, joinfusion} produces a join $\join{A}{\delta}{H}$ and an induced coaction $\delta_\Delta$ of $(H, \Delta)$. Moreover, if $\delta$ is free, then $\delta_\Delta$ is also free. Thus, Conjecture \ref{conj:top} may be reformulated in more general terms.

\begin{conjecture}[Baum-D\polhk{a}browski-Hajac, \cite{BDH}]\label{conj:type12}
Let $A$ be a unital $C^*$-algebra with a free coaction $\delta$ of a nontrivial compact quantum group $(H, \Delta)$. Equip the join $\join{A}{\delta}{H}$ with the induced free coaction $\delta_\Delta$. Then the following hold.
\begin{itemize}
\item Type 1 Conjecture: There is no $(\delta, \delta_\Delta)$-equivariant unital $*$-homomorphism $A \to \join{A}{\delta}{H}$.
\item Type 2 Conjecture: There is no $(\Delta, \delta_\Delta)$-equivariant unital $*$-homomorphism $H \to \join{A}{\delta}{H}$.
\end{itemize}
\end{conjecture}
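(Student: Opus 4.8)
The plan is to attack both parts of \Cref{conj:type12} through a single numerical invariant $\iota$ attached to a free coaction that is simultaneously \emph{monotone} under equivariant unital $*$-homomorphisms and \emph{strictly increased} by the join construction $\delta \mapsto \delta_\Delta$. Since the join realizes the $C^*$-algebraic analogue of $X * G \to X$ from \Cref{conj:top}, the classical Borsuk-Ulam picture — in which the relevant index of $\bS^n = \bS^{n-1} * \bZ/2\bZ$ exceeds that of $\bS^{n-1}$ by one — suggests that the two facts to establish are, for any equivariant unital $*$-homomorphism $A \to B$ and any free $\delta$,
\bes
\iota(B) \le \iota(A) \qquad\text{and}\qquad \iota\big(\join{A}{\delta}{H}\big) \ge \iota(A) + 1.
\ees
Granting these, a $(\delta,\delta_\Delta)$-equivariant map $A \to \join{A}{\delta}{H}$ (Type 1) would force $\iota(\join{A}{\delta}{H}) \le \iota(A)$, contradicting the second inequality; the Type 2 statement is handled the same way, since an equivariant map $H \to \join{A}{\delta}{H}$ would exhibit the join as equivariantly trivial and so collapse $\iota$.

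The concrete candidates for $\iota$ are precisely the two invariants named in the abstract: the local-triviality dimension $\trivdim{A}{H}$ and the spectral count. For the monotonicity half I would argue directly from the definitions — an equivariant $*$-homomorphism pulls a local trivialization (respectively a spectral subspace decomposition) of the codomain back to the domain, which is essentially formal. In the commutative case with $H = C(\bZ/2\bZ)$ this recovers the classical theorem through the nonvanishing of Stiefel-Whitney classes, and when $H$ is finite-dimensional and abelian the spectral count is controlled by an equivariant $K$-theoretic degree, so the scheme can be pushed through in those regimes.

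The hard part — and the reason \Cref{conj:type12} remains a conjecture rather than a theorem — is the \emph{lower} bound $\iota(\join{A}{\delta}{H}) \ge \iota(A) + 1$ in full quantum generality. Establishing it requires $\iota$ to detect a genuine obstruction in the added join direction, and this is exactly where the negative phenomena of the present paper intervene: for certain finite-dimensional $H$ all iterated joins can be made \emph{cleft} as comodules over the associated Hopf algebra, which trivializes the cohomological obstruction that any such lower bound must rest on, so that no well-behaved invariant can solve the Type 1 conjecture for all free coactions of $H$. Compounding this, the $\theta$-deformation examples show that both $\trivdim{A}{H}$ and the spectral count can change under a deformation one would want to leave $\iota$ invariant, so neither is robust enough to serve as the universal $\iota$. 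The realistic program, therefore, is twofold: prove the conjecture outright in the classical and finite-dimensional abelian cases, where a monotone join-increasing invariant provably exists, and for the remaining $H$ carry out the negative analysis that demarcates which invariant-based strategies \emph{cannot} succeed — which is the content developed in the body of the paper.
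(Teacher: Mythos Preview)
The statement you are attempting to prove is a \emph{conjecture}, and the paper contains no proof of it. More to the point, the paper explicitly records (in the Introduction) that the Type~2 conjecture is \emph{false}: there exist compact quantum groups $H$ and free coactions $\delta$ on unital $C^*$-algebras $A$ for which a $(\Delta,\delta_\Delta)$-equivariant unital $*$-homomorphism $H \to \join{A}{\delta}{H}$ does exist. So your sentence ``the Type~2 statement is handled the same way, since an equivariant map $H \to \join{A}{\delta}{H}$ would exhibit the join as equivariantly trivial and so collapse~$\iota$'' cannot be salvaged into a proof --- any argument along those lines would prove a false statement. Indeed, much of \Cref{se.type2} is devoted to producing and analyzing Type~2 counterexamples (e.g.\ \Cref{thm:pathcontraction}).

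For Type~1 your program is exactly the weak/strong index framework the paper formalizes in \Cref{se.type1}, and you have correctly absorbed its limitation: \Cref{cor:noweakindex} shows that for strongly perfect $H=C^*(\Gamma)$ there is \emph{no} weak index on the full class of free coactions, so no invariant $\iota$ with the two properties you list can exist in that generality. Two smaller corrections: monotonicity of the local-triviality dimension goes the \emph{other} way than ``pulling back a local trivialization of the codomain to the domain'' --- one pushes the order-zero maps $H\to A$ forward along $A\to B$, yielding $\trivdim{B}{H}\le\trivdim{A}{H}$; and the cleftness of iterated joins is not what obstructs the lower bound $\iota(\join{A}{\delta}{H})\ge\iota(A)+1$, rather it is the existence (via \Cref{thm:perfectunitaries} and \Cref{cor:usualiterationfails}) of equivariant maps $A\to E_nH$ for all $n$ that forces any candidate $\iota$ to violate either finiteness or unboundedness on the joins. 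In short, your outline accurately summarizes the paper's positive strategy and its obstructions for Type~1, but it is not a proof, and for Type~2 the approach is doomed because the conjecture is false.
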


The above conjectures are quite general. For example, if $\Gamma$ is a finite group, then a grading of $A$ by $\Gamma$ corresponds to a coaction of the compact quantum group $\bC \Gamma$ on $A$. Such gradings were considered in \cite{taghavi}, with some highlights on the difference between the abelian and non-abelian cases. Motivated by the construction in \cite[Example 3]{taghavi}, which connects an invertible element of a nontrivial isotypic subspace to $1$ using its representation as a commutator, we find that perfect groups $\Gamma$ are the source of many pathological constructions in the equivariant join setting.

If $H = C(G)$ is classical, then \Cref{conj:top} is equivalent to \Cref{conj:type12} Type 1 by \cite[Proposition 3.5]{alexbenjoin}. That is, for compact groups $G$, it makes no difference if one discusses noncommutative or commutative $C^*$-algebras on which $G$ acts freely. Moreover, there are numerous special cases (classical and quantum) for which the Type 1 and Type 2 conjectures have been resolved \cite{BDH, BDHrevisited, alexbenjoin, hajacindex, benfree}. The Type 1 conjecture remains open in full generality, and the Type 2 conjecture is false by \cite[Theorems 2.3 and 2.6]{alexbenjoin}, even if $H$ is commutative and $A$ is nuclear. However, the classification of precisely which $H$ produce Type 2 counterexamples is incomplete.

We begin by giving some background and notational conventions in \Cref{se.notation}. Next, in \Cref{se.type1}, we consider the invariants used to prove special cases of the Type 1 conjecture in \cite{benfree, hajacindex} and describe their limitations through the construction of extremal examples. In the classical case $H = C(G)$, $G$ a compact torsion-free group, we produce examples which show that actions of finite local-triviality dimension (which satisfy the Type 1 conjecture by \cite[Theorem 5.3]{hajacindex}) include cases which are not distinguished by previous invariants. However, constructions using perfect groups show that the local-triviality dimension, or rather, any well-behaved invariant, cannot universally solve the Type 1 conjecture, even if $H$ is finite-dimensional. Namely, there exists a finite compact quantum group $H$ and a free coaction of $H$ on $A$, such that $A$ admits maps into any iterated join of $H$. This claim is in stark contrast to the finite abelian setting, and our constructions give examples of $C^*$-algebras, both commutative and noncommutative, which should be key examples in later study.

In \Cref{se.type2}, we consider the Type 2 conjecture and revisit the classification of which $H$ admit counterexamples. In the classical abelian case $H = C^*(\Gamma)$, $\Gamma$ discrete and abelian, we show that the Type 2 conjecture fails if direct sums of the map induced by $\gamma \mapsto \gamma \oplus \gamma^{-1}$ contract to the trivial representation in a stabilized fashion. However, any reduced group $C^*$-algebra which satisfies the Baum-Connes conjecture and has nontrivial rational homology enjoys a non-contractibility property (and in particular, the reduced $C^*$-algebras considered need not admit characters).  Next, we study the characters of joins and fusions, concluding that they may be easily described in terms of the characters of the components. This allows us to write a connectedness property for equivariant joins when the compact quantum group $H$ has a counit. We conclude with discussion of cleftness when $H$ is finite-dimensional, showing that for certain $H$, all of the iterated joins of $H$ are cleft as comodules over the associated Hopf algebra. However, there might not exist cleaving maps which respect the adjoint.


\section{Background and Notational Conventions}\label{se.notation}

If $X$ and $G$ are compact Hausdorff spaces, then the \textit{join} of $X$ and $G$ is defined by
\bes \begin{aligned}
X * G := ( X \times G \times [0,1] ) / \sim, \hspace{.25 in}\\
(x, g, 0) \sim (x, h, 0) \hspace{.5 in} \forall g, h \in G, x \in X, \\
(x, g, 1) \sim (y, g, 1) \hspace{.5 in} \forall x, y \in X, g \in G, \\
\end{aligned} \ees
so that $X * G$ connects a copy of $X$ to a copy of $G$ within a continuum of copies of $X \times G$. The iterated joins of $G$ are denoted
\bes
G^{*n} = E_{n-1} G,
\ees
where the above connects $n$ copies of $G$ (or is defined using $n-1$ join procedures). If $G$ is a compact group which acts on $X$, then the diagonal action of $G$ on $X * G$ is given by $[(x, g, t)] \cdot h = [(x \cdot h, gh, t)]$, which extends to the iterated joins in a similar way.

As in \cite{BDH, joinfusion}, the join procedure generalizes to the noncommutative setting, with one key difference: the coaction induced on a join cannot be diagonal, so the boundary conditions of the join need to be adjusted to produce a compatible coaction. If $(H, \Delta)$ is a compact quantum group which coacts on a unital $C^*$-algebra $A$ through $\delta: A \to A \otimes_\text{min} H$, then the (equivariant) noncommutative join
\bes
\join{A}{\delta}{H} := \{f \in C([0,1], A \otimes_{\text{min}} H): f(0) \in \delta(A), f(1) \in \bC \otimes H \}
\ees
connects a copy of $A$ to a copy of $H$ in such a way that applying the rule
\be\label{eq:joinsaction}
\delta_\Delta f (t) := (\textrm{id}_A \otimes \Delta)(f(t))
\ee
produces a coaction of $(H, \Delta)$ on $\join{A}{\delta}{H}$. Moreover, if $\delta$ is free, meaning $\delta$ is injective and meets the Ellwood condition
\bes
\overline{\left\{ \sum_{\text{finite}} (a_i \otimes 1) \delta(b_i): a_i, b_i \in A \right \}} = A \otimes_{\text{min}} H
\ees
as in \cite{ellwood}, then $\delta_\Delta$ is also free. 

While the $\delta(A)$ boundary condition in the equivariant noncommutative join does not appear to match the topological case, there is no fundamental difference. That is, if $A = C(X)$, $H = C(G)$, and $\delta$ is dual to an action of $G$ on $X$, then $\join{A}{\delta}{H}$ is equivariantly isomorphic to $C(X * G)$. Note also that we have reversed the interval in the join from \cite{BDH}. The new convention will simplify the presentation in \Cref{se.type1} of the iterated joins of $H$. Moreover, we believe this arrangement is more in-tune with classical conventions, in that a null-homotopy begins with a general path and ends with a constant path, not the other way around. To simplify notation, we also assume henceforth that all tensor products are minimal.

We will primarily be concerned with group $C^*$-algebras. Let $\Gamma$ be a discrete group, and define
\be\label{eq:characterNC}
\Delta(\gamma) = \gamma \otimes \gamma
\ee
for each $\gamma \in \Gamma$. Further, let $\pi$ be a unitary representation of $\Gamma$ such that $\mathbb{C} \Gamma$ injects into the completion $C^*_\pi(\Gamma)$ and $\Delta$ extends linearly and continuously to a comultiplication on $H = C^*_\pi(\Gamma)$. For brevity, we will simply call $C^*_\pi(\Gamma)$ a $C^*$-completion of $\Gamma$ without repeating these conditions. If $\delta$ is a coaction of $H = C^*_\pi(\Gamma)$ on $A$, then for each $\gamma \in \Gamma$, we denote the $\gamma$-isotypic subspace of $(A, \delta)$ as
\bes
A_\gamma := \{a \in A: \delta(a) = a \otimes \gamma \}.
\ees
The isotypic subspaces produce a grading on $A$, and $\delta$ is free if and only if these subspaces satisfy the saturation property
\be\label{eq:saturation}
\forall \gamma \in \Gamma, \hspace{.3 in} 1 \in \overline{A_\gamma A_\gamma^*}.
\ee
Equivalently, the grading of $A$ by $\Gamma$ is strong (see \cite{montgomery, phillips}). This is not the most general form of a saturation condition, but it will be sufficient for our purposes -- see \cite{freeness} for discussion of the equivalence of saturation conditions and freeness in the general setting. Also, note that if $H = C(G) = C^*(\Gamma)$ for $G$ a compact abelian group and $\Gamma$ its discrete abelian dual, then a free coaction $\delta$ of $H$ on $A$ is dual to a free action $\beta$ of $G$ on $A$. In particular, \Cref{eq:characterNC} is essentially the declaration that $\gamma$ is a character, and the equation $\delta(\gamma) = a \otimes \gamma$ may be written in the equivalent form $\beta_g(a) = \gamma(g) a$. That is, in the classical setting, $A_\gamma$ is precisely the $\gamma$-spectral subspace.

If $A = H$ and $\delta = \Delta$, we may produce the join of $H$ with itself, and this procedure may be iterated, as in the classical case.  In fact, the iterated joins of $H$ have been crucial in the solution of both classical and noncommutative Borsuk-Ulam problems. We adopt two notational conventions below in an attempt to be consistent with the classical setting. Set
\bes
(E_0 H, \Delta_0) := (H, \Delta)
\ees
and
\bes
(E_n H, \Delta_n) := \left(\join{E_{n-1} H}{\Delta_{n-1}}{H}, (\Delta_{n-1})_\Delta\right),
\ees
where the subscript notation is used as in \Cref{eq:joinsaction}. That is, $E_n H$ is the join of $n + 1$ copies of $H$ with a free coaction $\Delta_n$ induced by the join procedure. We caution the reader that the equivariant join does not satisfy an obvious associativity property. Namely, there is always a single compact quantum group on the right, so the iteration must always proceed in a left-to-right fashion. Analogous to the classical case, we also let
\bes
H^{\circledast n} := E_{n-1} H
\ees
denote the join of $n$ copies of $H$. Similarly, we let $H^{\otimes n}$ and $H^{\oplus n}$ denote the tensor product or direct sum of $n$ copies of $H$, so that we may denote the trivial object using $0$ copies, as in $H^{\otimes 0} := \bC$ and $H^{\oplus 0} := \{0\}$.

\section{Type 1 and Dimension Invariants}\label{se.type1}

\Cref{conj:type12} Type 1 has been solved in some special cases, as in \cite{benfree, hajacindex}. Below we give a definition of a \textit{weak index} and \textit{strong index} in order to provide context for the methods used in both cases.

\begin{definition}
Fix a compact quantum group $(H, \Delta)$, and fix a class $\mathcal{C}$ of pairs $(A, \delta)$, where $\delta$ is a free coaction of $H$ on $A$. Suppose also that $(E_n H, \Delta_n) \in \mathcal{C}$ for all $n$ bigger than or equal to a fixed $N$. If $\alpha: \text{Free}(H) \to [-\infty, \infty]$ associates an extended real number to each pair $(A, \delta)$, then we call $\alpha$ a \textit{weak index} on $\mathcal{C}$ if it satisfies the properties (F), (D), and (I).
\begin{itemize}
\item (F) If $(A, \delta) \in \mathcal{C}$, then $\alpha(A, \delta)$ is finite.
\item (D) If $(A, \delta_A), (B, \delta_B) \in \mathcal{C}$ and there is a $(\delta_A, \delta_B)$-equivariant map $A \to B$, then $\alpha(B, \delta_B) \leq \alpha(A, \delta_A)$.
\item (I) The set $\{ \alpha(E_n H, \Delta_n): n \geq N \}$ is not bounded above.
\end{itemize}
If, in addition, $\mathcal{C}$ is closed under the equivariant join procedure, and $\alpha$ satisfies the following property (J), then $\alpha$ is called a \textit{strong index} on $\mathcal{C}$.
\begin{itemize}
\item (J) If $(A, \delta) \in \mathcal{C}$, then $\alpha(\join{A}{\delta}{H}, \delta_\Delta) = \alpha(A, \delta) + 1$.
\end{itemize}
\end{definition}

These properties stand for \qu{finiteness}, \qu{decreasing}, \qu{iteration}, and \qu{join}, respectively, where we note that property (J) is a strengthening of property (I). When $H$ and $\delta$ are understood, we will abuse notation by writing $\alpha(A)$ instead of $\alpha(A, \delta)$. We emphasize that a strong index is analogous, with a few tweaks, to the notion of an index in the classical setting (see \cite{volovikovindex}, for example). The specification of weak versus strong is meant to capture the minimal assumptions necessary for $\alpha$ to affirm the Type 1 Borsuk-Ulam conjecture for a fixed $(H, \Delta)$ and any $(A, \delta)$ in the class $\mathcal{C}$. 

Implicit in the definition of a weak index is the iteration procedure of \cite{benfree}, which is very much motivated by classical ideas. In its weakest form, the iteration procedure first posits that if an equivariant map $A \to \join{A}{\delta}{H}$ exists, then a quotient produces an equivariant map $A \to H = E_0 H$. It follows that $\{n: \exists \text{ equivariant } A \to E_n H\}$ is the set of all nonnegative integers, as
\bes
A \to E_n H \hspace{.1 in} \implies \hspace{.1 in} A \to E_{n+1} H
\ees 
via the composition
\bes
A \to \join{A}{\delta}{H} \to \join{E_n H}{\Delta_n}{H} = E_{n+1}H.
\ees
Thus, if $H$ coacts on $A$, and an index $\alpha$ exists on a class which includes $A$, then the supposed map $A \to \join{A}{\delta}{H}$ has produced a contradiction. Namely, $\alpha(A, \delta)$ is finite but is an upper bound of the set $\{ \alpha(E_n H, \Delta_n): n \geq 0 \}$, which is not bounded above.

The resolved cases of the Type 1 conjecture in \cite{benfree, hajacindex} ultimately rely on some form of weak index. In \cite{benfree}, coactions of $H = C(\bZ/n\bZ) = \bC[\bZ/n\bZ]$ were considered, and the arguments therein focused on spectral subspaces. Let $\gamma \in \bZ/n\bZ$ be a generator and consider the spectral subspace (that is, the isotypic subspace) $A_\gamma$. Then the \textit{spectral count}
\be\label{eq:sc}
c_\gamma(A) = \inf \left\{n \geq 0: \exists a_0, b_0, \ldots, a_n, b_n \in A_\gamma: \sum\limits_{i=0}^n a_i b_i^* \textrm{ is invertible} \right\}
\ee
is a weak index on the class of \textit{all} unital $C^*$-algebras with free actions of $\bZ/n\bZ$. In particular, finiteness of $c_\gamma(A)$ is due to the equivalence of freeness and saturation properties, and unboundedness of the set of $c_\gamma(E_nH)$ is due to the increasing connectivity of the iterated joins of $\bZ/n\bZ$ (see \cite[Proposition 4.4.3]{matousek} and \cite[Remark on p.68]{dold}). 

The invariant considered in \cite{hajacindex} is the local-triviality dimension of a coaction; we repeat the definition below.

\begin{definition}[Gardella-Hajac-Tobolski-Wu, \cite{hajacindex}]
Let $A$ be a unital $C^*$-algebra with a coaction $\delta$ of a compact quantum group $(H, \Delta)$. The \textit{local-triviality dimension} of $(A, \delta)$ is defined to be the infimum of the set of $d \geq 0$ such that there exist $H$-equivariant $*$-homomorphisms $\rho_0, \ldots, \rho_d: C_0((0,1]) \otimes H \to A$ such that $\sum\limits_{j=0}^d \rho_j(\textrm{id} \otimes 1) = 1$. Equivalently, the local-triviality dimension is the infimum of the set of $d \geq 0$ such that there exist $H$-equivariant, completely positive, contractive, order zero linear maps $\gamma_0, \ldots, \gamma_d: H \to A$ such that $\sum\limits_{j=0}^d \gamma_j(1) = 1$. 
\end{definition}
\begin{remark}
Note that the case $\inf(\varnothing) = \infty$ may occur. We will primarily use $\trivdim{A, \delta}{H}$, or abbreviated forms such as $\trivdim{A}{H}$, $\trivdim{A}{\phantom{H}}$, or $\trivdim{\delta}{\phantom{H}}$, to denote the triviality dimension. 
\end{remark}

The local-triviality dimension is used in \cite[Theorem 5.3]{hajacindex} to show that if $H$ admits a nontrivial classical subgroup $C(G)$ and $\trivdim{A}{C(G)}$ is finite, then the Type 1 conjecture holds for $A$. By passing from $H$ to the subgroup $C(G)$, and from $A$ to the largest abelian quotient $A / I = C(X)$, the argument essentially proves that $\trivdim{C(X)}{C(G)}$ is a strong index on the class of unital commutative $C^*$-algebras with coactions of $C(G)$ having finite local-triviality dimension.

It is not shown in \cite{hajacindex} that the local triviality dimension satisfies property (J) of a strong index when $A$ is noncommutative. However, the inequality $\trivdim{\join{A}{\delta}{H}, \delta_\Delta}{H} \leq \trivdim{A, \delta}{H} + 1$ is immediate in all cases, as we show below.

\begin{proposition}
Let $\delta$ be a coaction of $(H, \Delta)$ on $A$, and equip $\join{A}{\delta}{H}$ with the induced coaction $\delta_\Delta$. Then $\trivdim{\join{A}{\delta}{H}, \delta_\Delta}{H} \leq \trivdim{A, \delta}{H} + 1$. Consequently, $\trivdim{E_n H, \Delta_n}{H} \leq n$ for each $n$.
\end{proposition}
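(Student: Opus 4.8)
The plan is to prove the inequality $\trivdim{\join{A}{\delta}{H}, \delta_\Delta}{H} \le \trivdim{A,\delta}{H} + 1$ by a direct construction, and then obtain the bound on $E_n H$ by an immediate induction. I would work with the second (order zero) characterization of the local-triviality dimension, since it accommodates the scalar rescalings that a genuine $*$-homomorphism would not. So assume $d := \trivdim{A,\delta}{H} < \infty$ (otherwise there is nothing to prove) and fix $H$-equivariant, completely positive, contractive, order zero maps $\gamma_0,\dots,\gamma_d \colon H \to A$ with $\sum_{j=0}^d \gamma_j(1) = 1$. The goal is to manufacture $d+2$ such maps into $\join{A}{\delta}{H}$, which witnesses $\trivdim{\join{A}{\delta}{H}}{H} \le d+1$.

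Writing elements of the join as functions $s \mapsto f(s) \in A \otimes H$ on $[0,1]$ subject to $f(0) \in \delta(A)$ and $f(1)\in\bC\otimes H$, I would define maps $\tilde\gamma_0,\dots,\tilde\gamma_{d+1}\colon H \to \join{A}{\delta}{H}$ by
\[
\tilde\gamma_j(h)(s) = (1-s)\,\delta(\gamma_j(h)) \quad (0\le j\le d), \qquad \tilde\gamma_{d+1}(h)(s) = s\,(1_A\otimes h).
\]
The first $d+1$ maps interpolate the original covering data, pushed into $\delta(A)\subset A\otimes H$, from full weight at $s=0$ down to $0$ at $s=1$, while the last map uses the distinguished copy of $H$ sitting at the $s=1$ endpoint of the join and runs the other way. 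The boundary conditions are built in: at $s=0$ the first family lands in $\delta(A)$ and the last map gives $0$, while at $s=1$ the first family vanishes and the last gives $1_A\otimes h \in \bC\otimes H$. Summing the values at $1$ collapses the telescope, since $\sum_{j\le d}(1-s)\delta(\gamma_j(1)) + s(1_A\otimes 1) = (1-s)\delta(1) + s(1_A\otimes 1) = 1_A\otimes 1_H$, which is exactly the unit of the join.

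What remains is routine verification, which I would organize as follows. Each $\tilde\gamma_j$ with $j\le d$ is $\gamma_j$ postcomposed with the $*$-homomorphism $\delta$, regarded as a constant $A\otimes H$-valued function on $[0,1]$, and then multiplied by the positive central contraction $s\mapsto 1-s$; since multiplying a completely positive contractive order zero map by a central positive contraction, and composing it with a $*$-homomorphism, both preserve the order zero property, each $\tilde\gamma_j$ is again completely positive, contractive, and order zero, and similarly for $\tilde\gamma_{d+1}$ (here $h\mapsto 1_A\otimes h$ is the $*$-homomorphism and $s\mapsto s$ the weight). Equivariance of each $\tilde\gamma_j$ reduces to the equivariance $\delta\gamma_j = (\gamma_j\otimes\id)\Delta$ together with coassociativity $(\id\otimes\Delta)\Delta = (\Delta\otimes\id)\Delta$: applying $(\id_A\otimes\Delta)$, which is the value of $\delta_\Delta$ at each $s$, to $(1-s)\delta\gamma_j(h)$ and comparing with $(\tilde\gamma_j\otimes\id)\Delta(h)$ yields the same expression, and $\tilde\gamma_{d+1}$ is handled identically using that $h\mapsto 1_A\otimes h$ intertwines $\Delta$ with $\id_A\otimes\Delta$. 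The only genuinely structural point, which I would single out as the crux of the argument rather than a serious obstacle, is recognizing that the $s=0$ boundary forces one to transport the $\gamma_j$ into $\delta(A)$ via $\delta$, and that the order zero (as opposed to $*$-homomorphism) formulation is precisely what legitimizes inserting the scalar weights $1-s$ and $s$.

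Finally, the bound $\trivdim{E_n H, \Delta_n}{H}\le n$ follows by induction on $n$: the base case $\trivdim{E_0 H}{H} = \trivdim{H,\Delta}{H} = 0$ is witnessed by the identity map $\id_H$, an equivariant unital $*$-homomorphism and hence completely positive, contractive, and order zero, with $\id_H(1)=1$, and the inductive step is exactly the inequality just established applied to $(E_{n-1}H,\Delta_{n-1})$.
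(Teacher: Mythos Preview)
Your proof is correct and follows essentially the same construction as the paper: the paper defines exactly the maps $\tilde\gamma_j(h)(s)=(1-s)\,\delta(\gamma_j(h))$ for $j\le d$ and $\tilde\gamma_{d+1}(h)(s)=s\,(1\otimes h)$, checks the unit-sum identity, and inducts from $\gamma_0=\id_H$. Your write-up actually spells out the order-zero and equivariance verifications more carefully than the paper does, but the argument is the same.
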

\begin{proof}
If $\trivdim{A, \delta}{H} = \infty$, then there is nothing to prove, so suppose $\trivdim{A, \delta}{H} = d$ and let $\gamma_0, \ldots, \gamma_d: H \to A$ be completely positive, contractive, order zero, $H$-equivariant linear maps such that $\sum\limits_{j=0}^d \gamma_j(1) = 1$. For $0 \leq j \leq d$, define $\widetilde{\gamma}_j: H \to \join{A}{\delta}{H}$ by
\bes
\widetilde{\gamma}_j(h)[s] := (1-s) \cdot \delta(\gamma_j(h)),
\ees
and define $\widetilde{\gamma}_{d+1}: H \to \join{A}{\delta}{H}$ by
\bes
\widetilde{\gamma}_{d+1}(h)[s] = s \cdot (1 \otimes h).
\ees
Then $\widetilde{\gamma}_0, \ldots, \widetilde{\gamma}_{d+1}$ are completely positive, contractive, order zero, $H$-equivariant linear maps with
\bes 
\sum_{j=0}^{d+1} \widetilde{\gamma}_j(1)[s] 	= \sum_{j=0}^d (1-s) \delta(\gamma_j(1)) + s \cdot (1 \otimes 1)
															= (1-s) \delta\left( \sum_{j=0}^d \gamma_j(1) \right) + s
															= (1 - s) \delta(1) + s
															= 1.
\ees
It follows that $\trivdim{\join{A}{\delta}{H}, \delta_\Delta}{H} \leq d + 1$. Finally, we note that since $\gamma_0 = \textrm{id}$ establishes $\trivdim{E_0H}{H} = 0$, we may induct to find $\trivdim{E_nH}{H} \leq n$.
\end{proof}

It would be of interest to know the most general circumstances under which equality holds, as in property (J) of a strong index. Similarly, for which $H$ is it guaranteed that all free coactions have finite local-triviality dimension? Operating once again in analogy with the abelian case, one might hope that if $H$ is finite-dimensional, then the local-triviality dimension of any free coaction is finite, and that this dimension also increases under the join procedure. However, our constructions below show that at least one of these properties must fail for many noncommutative $H$ of finite dimension. In fact, these claims are not restricted to local-triviality dimension.

We begin with a reformulation of the iterated join $E_n H$ as a set of functions from an $n$-simplex into $H^{\otimes n + 1}$ which meet boundary conditions.  We denote the standard $n$-dimensional simplex as $S(n) \subset \bR^{n}$. That is, $S(n)$ is the convex hull of $e_0 := \vec{0}$ and the standard basis vectors $e_1, \ldots, e_n$. Further, we denote the convex hull of vertices $e_{k_1}, \ldots, e_{k_m}$ by $S_{k_1 \ldots k_m}$. Similarly, a boundary condition on points of the subsimplex $S_{k_1\ldots k_m}$ is written as $C_{k_1\ldots k_m}$, so that we may declare
\bes
x \in S_{k_1\ldots k_m} \implies f(x) \in C_{k_1\ldots k_m}.
\ees
The trivial case is the $0$-join
\bes
E_0 H := H = C(S(0), H),
\ees
for which the condition $C_0 := H$ imposes no restrictions. Similarly, we have
\bes
E_1 H := \join{H}{\Delta}{H} = \{f \in C(S(1), H^{\otimes 2}): f(e_0) \in \Delta(H), f(e_1) \in \bC \otimes H\},
\ees
for which $C_0 := \Delta(H)$ and $C_1 := \bC \otimes H$ impose restrictions, but $C_{01} := H \otimes H$ does not. Finally, we may induct on this procedure: if $E_n H$ is given as
\be\label{eq:iteratedjoinsimplexform}
E_n H \cong \{f \in C(S(n), H^{\otimes n+1}): \textrm{ if } x \in S_{k_1 \ldots k_m} \textrm{ then } f(x) \in C_{k_1 \ldots k_m}\},
\ee
then we may produce boundary conditions $C^\prime$ for $E_{n+1} H$ as follows.

If $k_1, \ldots, k_m \leq n$, then define
\bes
C^{\prime}_{k_1 \ldots k_m} := (\text{id}^{\otimes n} \otimes \Delta)(C_{k_1 \ldots k_m})
\ees
and 
\bes
C^\prime_{k_1 \ldots k_m, n+1} := C_{k_1 \ldots k_m} \otimes H.
\ees
We must also consider the case when there are no $k_i$, for which we define
\bes
C^\prime_{n+1} := \bC^{\otimes n + 1} \otimes H.
\ees
Finally, we have that
\bes
E_{n+1} H \cong \{f \in C(S(n+1), H^{\otimes n + 2}): \textrm{ if } x \in S_{l_1 \ldots l_p}, \textrm{ then } f(x) \in C^\prime_{l_1 \ldots l_p}\},
\ees
so that by a recursive procedure, any $E_n H$ admits a presentation as the set of functions from $S(n)$ to $H^{\otimes n + 1}$ meeting conditions on each subsimplex. In particular, we note that in the \textit{interior} of any $m$-simplex $S_{k_1, \ldots k_{m+1}}$, only one boundary condition applies, and any $x \in \textrm{Int}(S_{k_1 \ldots k_{m+1}})$ is restricted to an embedded copy of $H^{\otimes m+1}$.

When $H = C^*_\pi(\Gamma)$ is given as a group $C^*$-algebra and $f$ is in the $\gamma$-isotypic subspace $(E_n H)_\gamma$, then by definition of the coaction and associated grading, this implies that for each $x \in S(n)$, $f(x) \in H^{\otimes n} \otimes \bC \gamma$. Combined with the boundary conditions above, every $x$ in the interior of an $m$-simplex $S_{k_1 \ldots k_{m+1}}$ is restricted to an embedded copy of $H^{\otimes m}$. That is, membership in the $\gamma$-isotypic subspace has removed one degree of freedom. We note that this applies when $m = 0$: the value of $f \in (E_n H)_\gamma$ at a vertex $e_p$ is determined up to a scalar in $\bC = H^{\otimes 0}$.

In what follows we will make repeated use of the following notion.

\begin{definition}\label{def.st-perf}
Let $\Gamma$ be a nontrivial discrete group, and fix a $C^*$-completion $\bC \Gamma \hookrightarrow C^*_\pi(\Gamma)$. The completion $C_\pi^*(\Gamma)$ is called {\it strongly perfect} if $\Gamma$ is perfect and every $\gamma\in \Gamma$ is in the same path connected component as $1$ in the unitary group of $C_\pi^*(\Gamma)$.
\end{definition}

Note that if $\Gamma$ is a finite nontrivial perfect group, then $\bC \Gamma$ is already a $C^*$-algebra, and it is strongly perfect. Therefore, the smallest strongly perfect $C^*_\pi(\Gamma)$ corresponds to the alternating group on $5$ generators. Further, if $C_\pi^*(\Gamma)$ is strongly perfect and $\pi$ dominates $\rho$, then $C^*_\rho(\Gamma)$ is also strongly perfect.

\begin{theorem}\label{thm:perfectunitaries}
Let $H = C_\pi^*(\Gamma)$ be strongly perfect. Then for any $\gamma \in \Gamma$ and for any $n$, there is a unitary element in $(E_n H)_\gamma$.
\end{theorem}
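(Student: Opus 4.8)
The plan is to prove, by induction on $n$, the stronger statement $\mathcal{Q}(n)$: for every $\gamma\in\Gamma$ there is a unitary $u\in(E_nH)_\gamma$ that is connected to $1$ inside the unitary group $U(E_nH)$. Since the theorem is exactly the existence half of $\mathcal{Q}(n)$, this suffices. The key structural observation is that $\mathcal{G}_n:=\{\gamma\in\Gamma:\mathcal{Q}(n)\text{ holds for }\gamma\}$ is a subgroup of $\Gamma$: the grading is multiplicative, so if $u\in(E_nH)_\gamma$ and $v\in(E_nH)_{\gamma'}$ are unitaries connected to $1$, then $uv\in(E_nH)_{\gamma\gamma'}$ and $u^*\in(E_nH)_{\gamma^{-1}}$ are connected to $1$ as well. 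As $\Gamma$ is perfect, $\Gamma=[\Gamma,\Gamma]$, so to obtain $\mathcal{G}_n=\Gamma$ it is enough to show that every commutator $[a,b]$ lies in $\mathcal{G}_n$. The base case $n=0$ is precisely the connectivity clause of strong perfectness, since $(E_0H)_\gamma=\bC\gamma$ and $\gamma$ is connected to $1$ in $U(H)$.

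For the inductive step I assume $\mathcal{Q}(n-1)$, i.e.\ $\mathcal{G}_{n-1}=\Gamma$, and fix $\gamma=[a,b]$. Applying $\mathcal{Q}(n-1)$ to $a$ and $b$ produces unitaries $\alpha\in(E_{n-1}H)_a$ and $\beta\in(E_{n-1}H)_b$ together with paths $\alpha_r,\beta_r$ in $U(E_{n-1}H)$ from $\alpha,\beta$ at $r=0$ to $1$ at $r=1$, while strong perfectness supplies paths $a_r,b_r$ in $U(H)$ from $a,b$ to $1$. Recalling that $E_nH=\{f\in C([0,1],E_{n-1}H\otimes H):f(0)\in\Delta_{n-1}(E_{n-1}H),\ f(1)\in\bC\otimes H\}$ and that $\Delta_{n-1}(\alpha)=\alpha\otimes a$ by definition of the isotypic subspace, I set $A(t):=\alpha_t\otimes a$ and $B(t):=\beta_t\otimes b$. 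Then $A(0)=\Delta_{n-1}(\alpha)$ and $A(1)=1\otimes a\in\bC\otimes H$, so $A,B\in U(E_nH)$, and the identity $[\alpha_t\otimes a,\beta_t\otimes b]=[\alpha_t,\beta_t]\otimes[a,b]$ shows that $F:=[A,B]$ is a unitary with $F(t)=[\alpha_t,\beta_t]\otimes\gamma$. Thus $F\in(E_nH)_\gamma$, which already yields the theorem once connectivity is in hand.

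It remains to show that $F=[A,B]$ is connected to $1$ in $U(E_nH)$; this is the heart of the matter and the one place where the commutator structure, hence perfectness, is indispensable, since the mere existence of isotypic unitaries gives no way to contract them to $1$. I view a homotopy as a map on the square $[0,1]_r\times[0,1]_t$ into $U(E_{n-1}H\otimes H)$ and prescribe its four edges: bottom $=F$, top $=1$, left ($t=0$) $=\Delta_{n-1}([\alpha_r,\beta_r])$, and right ($t=1$) $=1\otimes[a_r,b_r]$. For each fixed $r$ these values satisfy the two boundary conditions cutting out $E_nH$, so a filling of the square is exactly a homotopy $F\simeq1$ through $U(E_nH)$. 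The obstruction to filling is the class in $\pi_1(U(E_{n-1}H\otimes H))$ of the loop $L$ traced by the four edges, and the decisive point is that $L$ is the pointwise commutator $[L_\mathcal{A},L_\mathcal{B}]$ of the two closed loops $L_\mathcal{A},L_\mathcal{B}$ assembled from the edge data of $A$ and $B$ separately (for $A$: bottom $\alpha_t\otimes a$, right $1\otimes a_r$, top $1$, left $\Delta_{n-1}(\alpha_r)$; similarly for $B$). The commutator map $U\times U\to U$ is constant on each axis, hence induces the zero homomorphism on $\pi_1$, so $[L]=0$; equivalently, in the abelian group $\pi_1$ a pointwise commutator of loops contributes $[L_\mathcal{A}]+[L_\mathcal{B}]-[L_\mathcal{A}]-[L_\mathcal{B}]=0$. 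Therefore the square fills, $F$ is connected to $1$, and the induction closes.

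The step I expect to demand the most care is the edge bookkeeping of the last paragraph: checking that $L_\mathcal{A}$ and $L_\mathcal{B}$ are genuinely continuous closed loops (agreeing at all four corners), that each prescribed edge respects the $\Delta_{n-1}$- and $\bC\otimes H$-boundary conditions for every $r$, and that the four edges of $L$ are literally the pointwise commutators of the matching edges of $L_\mathcal{A}$ and $L_\mathcal{B}$. Once this matching is confirmed, the triviality of commutator loops in a topological group finishes the argument, and no finer understanding of $\pi_1(U(E_{n-1}H\otimes H))$ is required.
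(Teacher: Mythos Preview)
Your proof is correct and takes a genuinely different route from the paper's. The paper fixes $n$, uses the simplicial description of $E_nH$ as functions $S(n)\to H^{\otimes n+1}$ with boundary conditions, and inducts on the skeleton dimension $k$: at each stage the functions $f_\zeta^{(k)}$ are replaced by products of commutators so that the obstruction to extending over each $(k{+}1)$-cell lands in the abelian group $\pi_k(\mathcal U(H^{\otimes k}))$ and hence vanishes. You instead induct directly on $n$, exploiting only the two-stage recursion $E_nH=\join{E_{n-1}H}{\Delta_{n-1}}{H}$; the single obstruction to connecting $F=[A,B]$ to $1$ lives in $\pi_1(U(E_{n-1}H\otimes H))$, and you kill it by recognizing the boundary loop as a pointwise commutator. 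Your argument is more elementary in that it never invokes higher $\pi_k$ or the simplicial picture, and it delivers the bonus that the unitary is connected to $1$ in $U(E_nH)$. The paper's approach, on the other hand, makes the internal structure of $E_nH$ more transparent and constructs the unitary cell-by-cell rather than recursively. Both proofs rest on the same pivot: perfectness lets one write everything as commutators, and commutators die in the abelian homotopy groups of a topological group.
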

\begin{proof}
Fix $n$. An element in $E_n H$ may be considered as a function from $S(n)$ to $H^{\otimes n + 1}$ which meets boundary conditions on each subsimplex, as in \Cref{eq:iteratedjoinsimplexform}. For any group element $\gamma$, we will define a unitary element in $(E_n H)_\gamma$ via construction of maps $f^{(k)}_\gamma$ defined on the $k$-skeleta of $S(n)$, meeting the necessary boundary conditions. We note, however, that $f^{(k+1)}_\gamma$ will not usually extend $f^{(k)}_\gamma$.

The value of any function in $(E_n H)_\gamma$ on a particular vertex $e_0, \ldots, e_n$ must lie in a one-dimensional subspace of $H^{\otimes n + 1}$ (which depends on the vertex $e_i$). We may therefore define $f^{(0)}_\gamma$ by picking elements of these subspaces. Similarly, the boundary conditions of $(E_n H)_\gamma$ in the interior of each edge allow movement within an embedded copy of $H = C^*_\pi(\Gamma)$. By assumption, there is a path connecting $\gamma$ to $1$ within the unitaries of $H$, so we may extend $f^{(0)}_\gamma$ along edges to reach a unitary-valued $f^{(1)}_\gamma$ meeting the boundary conditions of $(E_nH)_\gamma$ on the $1$-skeleton.

Now, we may induct. Fix $1 \leq k \leq n - 1$, and for each $\zeta \in \Gamma$ suppose that we have selected a function $f_\zeta^{(k)}$ defined on the $k$-skeleton which meets the boundary conditions for $(E_n H)_\zeta$. If $\gamma \in \Gamma$ is fixed, then because $\Gamma$ is a perfect group, we may write $\gamma$ as a product of commutators $\gamma = \prod\limits_{j=0}^m \zeta_{2j} \cdot \zeta_{2j+1} \cdot \zeta_{2j}^{-1} \cdot \zeta_{2j+1}^{-1}$. Define 
\bes
g := \prod\limits_{j=0}^m f_{\zeta_{2j}}^{(k)} \cdot f_{\zeta_{2j+1}}^{(k)} \cdot f_{\zeta_{2j}}^{(k)^{-1}} \cdot f_{\zeta_{2j+1}}^{(k)^{-1}}
\ees
on the $k$-skeleton, noting that $g$ meets the boundary conditions of $(E_n H)_\gamma$. For any $(k+1)$-simplex $A \subseteq S(n)$, $g$ is defined at any boundary point of $A$, and we wish to extend it to the interior of $A$, retaining the fact that it is unitary-valued and meets boundary conditions for elements of $(E_n H)_\gamma$. We may move within (the unitaries of) an embedded copy of $H^{\otimes k+1}$ imposed by the boundary conditions. Now, as $g$ is represented as a product of commutators, its restriction to the boundary of $A$ is a trivial element in the group $\pi_k(\mathcal{U}(H^{\otimes k}))$, which is abelian. Therefore, $g$ may be extended to the interior of the subsimplex $A$, meeting the boundary conditions of $(E_n H)_\gamma$. Upon extending $g$ to each $(k+1)$-simplex, we have produced $f_{\gamma}^{(k+1)}$. Doing so for each $\gamma \in \Gamma$ completes the inductive step.
\end{proof}

\Cref{thm:perfectunitaries} applies when $\Gamma$ is a nontrivial finite perfect group, and the result is in stark contrast to the finite abelian case, as if $\gamma$ generates $\Gamma = \bZ/k\bZ$, then the spectral count \Cref{eq:sc} of $(E_n H)_\gamma$ increases without bound as $n$ increases. The following corollary shows that the discrepancy between the two cases is even more serious, through the use of universal constructions.

\begin{corollary}\label{cor:usualiterationfails}
Let $H = C^*(\Gamma)$ be a full group $C^*$-algebra which is strongly perfect. Fix a generating set $X$ of $\Gamma$ and let $F_X$ denote the free group on $X$. Then $A = C^*(F_X)$ admits a free coaction of $H$ such that for each $n \in \bZ^+$, there is an equivariant morphism $\phi_n: A \to E_n H$.
\end{corollary}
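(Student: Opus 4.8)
The plan is to build everything from the surjection $q\colon F_X \to \Gamma$ that sends each free generator $x \in X$ to the corresponding element of $\Gamma$ (well-defined and onto precisely because $X$ generates $\Gamma$), and to exploit the universal property of the full group $C^*$-algebra of a \emph{free} group at every stage. First I would define the coaction $\delta\colon A \to A \otimes H$ on the generators of $A = C^*(F_X)$ by
\[
\delta(x) := x \otimes q(x), \qquad x \in X.
\]
Since each $x \otimes q(x)$ is a unitary in $A \otimes H$ and $F_X$ is free on $X$, this assignment extends uniquely to a unital $*$-homomorphism. Coassociativity is checked on generators: both $(\id \otimes \Delta)\delta$ and $(\delta \otimes \id)\delta$ send $x$ to $x \otimes q(x) \otimes q(x)$, using $\Delta(q(x)) = q(x) \otimes q(x)$ from \Cref{eq:characterNC}. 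Hence $\delta$ is a coaction of $H$ on $A$.

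Next I would confirm that $\delta$ is free. A word $w \in F_X$ satisfies $\delta(w) = w \otimes q(w)$, so $w$ lies in the isotypic subspace $A_{q(w)}$. Because $q$ is surjective, for every $\gamma \in \Gamma$ there is a word $w$ with $q(w) = \gamma$, whence $w \in A_\gamma$ is a unitary and $w w^* = 1 \in A_\gamma A_\gamma^*$. Thus the saturation condition \Cref{eq:saturation} holds trivially, and by the equivalence recorded in \Cref{se.notation} the coaction $\delta$ is free. (Injectivity is in any case immediate from the counit $\gamma \mapsto 1$ on $H$, which satisfies $(\id \otimes \varepsilon)\delta = \id_A$.)

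The heart of the matter is the construction of the equivariant maps $\phi_n$, and here the freeness of $F_X$ is exactly what makes the argument work. By \Cref{thm:perfectunitaries}, for each $\gamma \in \Gamma$ and each $n$ there is a unitary $u_\gamma^{(n)} \in (E_n H)_\gamma$. I would define $\phi_n\colon A \to E_n H$ on generators by
\[
\phi_n(x) := u_{q(x)}^{(n)}, \qquad x \in X,
\]
again invoking the universal property of $C^*(F_X)$ to extend this to a unital $*$-homomorphism. Equivariance is then a one-line check on generators: since $u_{q(x)}^{(n)}$ is $q(x)$-isotypic, $\Delta_n(\phi_n(x)) = u_{q(x)}^{(n)} \otimes q(x) = (\phi_n \otimes \id)(\delta(x))$, and equivariance on all of $A$ follows because both composites are $*$-homomorphisms agreeing on a generating set.

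The step carrying the real weight is already discharged by \Cref{thm:perfectunitaries}; the genuine conceptual obstacle the statement must navigate is that, for a general $\Gamma$, a family of isotypic unitaries $u_\gamma^{(n)}$ need not satisfy the defining relations of $\Gamma$, so there is \emph{no} reason to expect a map $C^*(\Gamma) \to E_n H$. Passing to the free cover $F_X$ removes precisely this obstruction: there are no relations to respect, so any assignment of generators to unitaries---in particular to those furnished by the theorem---automatically defines a $*$-homomorphism, and matching the grading costs nothing. The only mild care needed elsewhere is to ensure the relevant maps are unital and land in the minimal tensor product, both of which are automatic here.
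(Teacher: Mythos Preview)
Your proof is correct and follows essentially the same route as the paper: define the coaction on generators by $U_x \mapsto U_x \otimes x$, verify freeness via the saturation condition using unitaries in each isotypic subspace, and then invoke \Cref{thm:perfectunitaries} together with the universal property of $C^*(F_X)$ to produce each $\phi_n$. Your version is somewhat more explicit (coassociativity, injectivity via the counit, and the equivariance check on generators are all spelled out), but the strategy and the key input are identical.
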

\begin{proof}
The full group $C^*$-algebra $A = C^*(F_X)$ may be presented as
\bes
A = C^*( U_x, x \in X \hspace{2 pt} | \hspace{2 pt} U_x U_x^* = 1 = U_x^* U_x),
\ees
which may be given a coaction $\delta: A \to A \otimes H$ defined by
\bes
\delta: U_x \mapsto U_x \otimes x.
\ees
The unital $*$-homomorphism $\delta$ exists, as $U_x \otimes x$ is unitary for each $x \in X$. It follows that $\delta$ is injective, and it is also free due to the saturation condition \Cref{eq:saturation}. Namely, there is a unitary in each isotypic subspace. Similarly, if $n$ is fixed, then by \Cref{thm:perfectunitaries}, for each $x \in X$ there is a unitary $y_x \in (E_n H)_x$. The universal property of $A$ shows there is a morphism $\phi_n: A \to E_n H$ defined by $\phi_n(U_x) = y_x$, which is equivariant as it preserves the isotypic subspaces.
\end{proof}

\begin{corollary}\label{cor:noweakindex}
Let $H = C^*(\Gamma)$ be a full group $C^*$-algebra which is strongly perfect. Then any class $\mathcal{C}$ for which there exists a weak index must exclude the $C^*$-algebraic constructions found in \Cref{cor:usualiterationfails}. In particular, if $\emph{Free}(H)$ denotes the class of all $(A, \delta)$ such that $\delta$ is a free coaction of $H$ on $A$, then there is no weak index defined on $\emph{Free}(H)$. 
\end{corollary}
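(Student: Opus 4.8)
The plan is to argue by contradiction, using the equivariant maps $\phi_n \colon A \to E_n H$ produced in \Cref{cor:usualiterationfails} as a direct violation of the index axioms. Suppose, then, that $\mathcal{C}$ is a class admitting a weak index $\alpha$, and suppose toward a contradiction that the pair $(A, \delta)$ of \Cref{cor:usualiterationfails} lies in $\mathcal{C}$. Property (F) immediately gives that $\alpha(A, \delta)$ is a finite real number.

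Next I would play the decreasing property (D) against the maps $\phi_n$. Recall that for a weak index there is a fixed $N$ with $(E_n H, \Delta_n) \in \mathcal{C}$ for all $n \geq N$. For every $n \geq \max(N, 1)$ the morphism $\phi_n \colon A \to E_n H$ is $(\delta, \Delta_n)$-equivariant and both endpoints lie in $\mathcal{C}$, so (D) forces
\bes
\alpha(E_n H, \Delta_n) \leq \alpha(A, \delta).
\ees
The remaining indices $n$ with $N \leq n < \max(N,1)$ — present only when $N = 0$, in which case $n = 0$ — contribute values that are finite by (F), since $(E_0 H, \Delta_0) \in \mathcal{C}$. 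Hence the entire set $\{\alpha(E_n H, \Delta_n) : n \geq N\}$ is bounded above, contradicting property (I). This shows $(A, \delta) \notin \mathcal{C}$, which is the first assertion.

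For the final claim I would simply check that $\mathrm{Free}(H)$ is an admissible candidate class which nonetheless contains the forbidden pair. Every iterated join $(E_n H, \Delta_n)$ carries a free coaction, so $(E_n H, \Delta_n) \in \mathrm{Free}(H)$ for all $n \geq 0$ and the standing hypothesis of the weak index definition is met with $N = 0$. On the other hand, the coaction $\delta$ constructed in \Cref{cor:usualiterationfails} was shown to be free via the saturation condition \Cref{eq:saturation}, so $(A, \delta) \in \mathrm{Free}(H)$ as well. By the previous paragraph no weak index can be defined on a class containing $(A, \delta)$, and therefore none exists on $\mathrm{Free}(H)$.

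The argument carries essentially no obstacle of its own: it is the formal repackaging of the iteration-contradiction heuristic recorded after the definition of a weak index, and all the genuine difficulty is absorbed into \Cref{thm:perfectunitaries} and \Cref{cor:usualiterationfails}, which manufacture the maps $\phi_n$. The only point demanding any care is the bookkeeping at the threshold $N$, handled above by appealing to (F) for the finitely many joins below $\max(N,1)$.
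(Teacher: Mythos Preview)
Your argument is correct and follows the same contradiction strategy as the paper: combine (F), (D), and (I) against the equivariant maps $\phi_n$ from \Cref{cor:usualiterationfails}. You are simply more careful than the paper about the threshold $N$ and about verifying that both endpoints lie in $\mathcal{C}$ before invoking (D); the paper's proof states the inequality $\alpha(A)\ge \sup_n \alpha(E_nH)$ in one line without this bookkeeping.
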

\begin{proof}
Suppose $\alpha$ is a weak index which may be evaluated on the coaction of $C^*(\Gamma)$ on $A = C^*(F_X)$, as in \Cref{cor:usualiterationfails}. Then as there exist equivariant maps $\phi_n: A \to E_n H$ for all $n$, it follows that
\bes
\alpha(A) \geq \sup \{ \alpha(E_n H): n \in \mathbb{N} \}.
\ees 
This is a contradiction, as the right hand side is infinite by property (I), but the left hand side is finite by property (F).
\end{proof}

\Cref{cor:noweakindex} applies when $H = \bC \Gamma$, where $\Gamma$ is a finite nontrivial perfect group. Therefore, even in the finite-dimensional (quantum) setting, it is possible that free coactions of $(H, \Delta)$ are not classified by finiteness of a well-behaved invariant. However, for any particular choice of invariant, it is not clear which of the conditions of a weak index must fail. For example, do the constructions in \Cref{cor:usualiterationfails} have infinite local-triviality dimension, or does the local-triviality dimension fail to increase when an iterated join is applied?

\begin{question}
If $H$ is a fixed compact quantum group, does $\trivdim{E_n H, \Delta_n}{H} = n$ for all $n$?
\end{question}

It remains in the realm of possibility that all compact quantum groups, and in particular all finite-dimensional ones, have a positive answer. If this holds, then the local-triviality dimension would solve the Type 1 conjecture for any free coactions with $\trivdim{A, \delta}{H} < \infty$. However, \Cref{cor:noweakindex} would then show that, for certain finite-dimensional $H$, there can exist free coactions with infinite dimension, and those coactions would need to be handled separately. Below we consider a situation similar to \Cref{thm:perfectunitaries} for general $C^*$-algebras.

\begin{proposition}\label{prop:perfectcommun}
Let $\Gamma$ be a nontrivial discrete perfect group and fix a $C^*$-completion $C^*_\pi(\Gamma)$. Suppose $\delta$ is a coaction of $C^*_\pi(\Gamma)$ on a unital $C^*$-algebra $A$, such that every $\gamma$-isotypic subspace of $A$ contains a unitary element. If the group of path connected components of unitaries in $A$ is abelian, then the induced coaction of $C^*_\pi(\Gamma)$ on $\join{A}{\delta}{C^*_\pi(\Gamma)}$ also has a unitary element in each $\gamma$-isotypic subspace.
\end{proposition}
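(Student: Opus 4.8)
The plan is to reduce the statement to a concrete path problem inside the unitary group of $A$, and then exploit perfectness of $\Gamma$ together with the hypothesis that $\pi_0(\mathcal{U}(A))$ (the group of path components of unitaries in $A$) is abelian.

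First I would unwind the $\gamma$-isotypic subspace of $J := \join{A}{\delta}{H}$, writing $H := C^*_\pi(\Gamma)$. Since $\gamma$ is group-like, i.e. $\Delta(\gamma)=\gamma\otimes\gamma$, the $\gamma$-isotypic subspace of $A\otimes H$ for the coaction $\id_A\otimes\Delta$ is exactly $A\otimes\gamma$. Hence an element $f\in (J)_\gamma$ is precisely a norm-continuous path $t\mapsto a(t)\otimes\gamma$ with $a\in C([0,1],A)$, and the two join boundary conditions translate into constraints on $a$: since $\delta(A)\cap(A\otimes\gamma)=\delta(A_\gamma)$, the condition $f(0)\in\delta(A)$ becomes $a(0)\in A_\gamma$, while $f(1)\in\bC\otimes H$ becomes $a(1)\in\bC 1_A$. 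Because $\gamma$ is a unitary of $H$, such an $f$ is a unitary in $J$ if and only if $a(t)$ is a unitary of $A$ for every $t$. Thus producing a unitary in $(J)_\gamma$ is equivalent to producing a norm-continuous path of unitaries in $A$ starting at a unitary of $A_\gamma$ and ending at a scalar multiple of $1_A$.

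Next I would build the starting point using perfectness. Writing $\gamma=\prod_j \alpha_j\beta_j\alpha_j^{-1}\beta_j^{-1}$ as a product of commutators and choosing, by hypothesis, unitaries $u_{\alpha_j}\in A_{\alpha_j}$ and $u_{\beta_j}\in A_{\beta_j}$, the product $W:=\prod_j u_{\alpha_j}u_{\beta_j}u_{\alpha_j}^*u_{\beta_j}^*$ is a unitary lying in $A_\gamma$, because the grading is multiplicative and $u^*\in A_{\zeta^{-1}}$ whenever $u\in A_\zeta$ is unitary. The key step is then to observe that the class $[W]$ in $\pi_0(\mathcal{U}(A))$ is a product of commutators of the classes $[u_{\alpha_j}]$ and $[u_{\beta_j}]$, hence trivial since $\pi_0(\mathcal{U}(A))$ is abelian. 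Therefore $W$ lies in the path component of $1_A$, and any witnessing path $a:[0,1]\to\mathcal{U}(A)$ with $a(0)=W$ and $a(1)=1_A$ solves the lifting problem from the first step.

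Finally I would assemble $f(t):=a(t)\otimes\gamma$ and verify the three requirements: $f(0)=\delta(W)\in\delta(A)$, $f(1)=1_A\otimes\gamma\in\bC\otimes H$, and $\delta_\Delta(f)=f\otimes\gamma$, so that $f$ is the desired unitary in $(J)_\gamma$. The main obstacle is not a hard estimate but the careful identification in the first step, namely translating the abstract join boundary conditions into the path constraints $a(0)\in A_\gamma$ and $a(1)\in\bC 1_A$; the conceptual heart is the observation that abelianness of $\pi_0(\mathcal{U}(A))$ annihilates the commutator class $[W]$, exactly mirroring the role played by abelianness of $\pi_k(\mathcal{U}(H^{\otimes k}))$ in the proof of \Cref{thm:perfectunitaries}.
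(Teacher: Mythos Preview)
Your proposal is correct and follows essentially the same approach as the paper: build a unitary $W\in A_\gamma$ as a product of commutators of isotypic unitaries, use abelianness of $\pi_0(\mathcal{U}(A))$ to connect $W$ to $1_A$, and then set $f(t)=a(t)\otimes\gamma$. Your first step (unwinding the $\gamma$-isotypic subspace of the join into the path conditions $a(0)\in A_\gamma$, $a(1)\in\bC 1_A$) is made more explicit than in the paper, which simply asserts that $t\mapsto g(t)\otimes\gamma$ lies in the desired isotypic subspace without spelling out the translation.
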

\begin{proof}
For each $\zeta \in \Gamma$, choose a unitary $U_\zeta$ in $A_\zeta$. Fix an arbitrary $\gamma \in \Gamma$. Since $\Gamma$ is perfect, we may write $\gamma = \prod\limits_{j=0}^m \zeta_{2j} \cdot \zeta_{2j+1} \cdot \zeta_{2j}^{-1} \cdot \zeta_{2j+1}^{-1}$. It follows that the unitary $W_\gamma := \prod\limits_{j=0}^m U_{\zeta_{2j}} \cdot U_{\zeta_{2j+1}} \cdot U_{\zeta_{2j}}^{-1} \cdot U_{\zeta_{2j+1}}^{-1}$ is also in $A_\gamma$. Since $W_\gamma$ is expressed as a commutator, there is a path $g(t)$ connecting $g(0) = W_\gamma$ to $g(1) = 1$ within the unitaries of $A$. It follows that the function
\bes
t \mapsto g(t) \otimes \gamma
\ees
represents a unitary element in the $\gamma$-isotypic subspace of $\join{A}{\delta}{C^*_\pi(\Gamma)}$.
\end{proof}

As used in \cite[Example 3]{taghavi}, $C_r^*(F_n)$ has stable rank one (see \cite[Theorems 1.1 and 3.8]{dykema} and the corrigendum \cite{dykemacorrected}), so the path connected components of its unitaries form an abelian group, and certain unitary elements may be immediately contracted to the identity. In the language of join constructions, the same reasoning shows that the Type 1 conjecture needs to see the difference between full and reduced $C^*$-algebras on which $H$ coacts.

\begin{proposition}\label{prop:fulltoreducedpath}
Suppose $\Gamma$ is a nontrivial discrete perfect group with finite generating set $X$, and define $H = C_r^*(\Gamma)$. Suppose there are well-defined, free coactions of $H$ on the full and reduced $C^*$-algebras $C^*(F_X)$ and $C_r^*(F_X)$ determined by the declaration that the unitary corresponding to the generator $x \in X$ is in the $x$-isotypic subspace. Then there is an equivariant morphism $\phi: C^*(F_X) \to \join{C^*_r(F_X)}{\delta}{H}$.
\end{proposition}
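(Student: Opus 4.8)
The overall strategy mirrors the proof of \Cref{cor:usualiterationfails}: use that the source $C^*(F_X)$ is a universal object, so that maps out of it are cheap to build, and feed it unitaries produced by \Cref{prop:perfectcommun} applied to the reduced algebra. Write $B = C^*_r(F_X)$ and recall that $C^*(F_X)$ is the universal unital $C^*$-algebra on unitaries $U_x$, $x \in X$, with no further relations. Hence a unital $*$-homomorphism out of $C^*(F_X)$ is determined by an arbitrary choice of unitary image for each $U_x$, and since the $U_x$ generate and satisfy $\delta(U_x) = U_x \otimes x$ in the source, such a map is equivariant exactly when each $U_x$ is sent into the $x$-isotypic subspace of the codomain. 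Thus it suffices to produce, for every $x \in X$, a single unitary $y_x$ in the $x$-isotypic subspace of $\join{B}{\delta}{H}$.

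These unitaries are supplied by \Cref{prop:perfectcommun} with the roles $A = B$ and $C^*_\pi(\Gamma) = H = C^*_r(\Gamma)$, once its two hypotheses are verified. First, every $\zeta$-isotypic subspace of $B$ must contain a unitary: because $X$ generates $\Gamma$ there is a canonical surjection $\pi: F_X \to \Gamma$ sending each free generator to the corresponding element of $X$, and by multiplicativity the coaction satisfies $\delta(V_w) = V_w \otimes \pi(w)$ for the canonical unitary $V_w \in B$ attached to $w \in F_X$; choosing any $w$ with $\pi(w) = \zeta$ yields a unitary $V_w \in B_\zeta$. Second, the group of path components of $\mathcal{U}(B)$ must be abelian. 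Here I would observe that a nontrivial perfect group has trivial abelianization and so is non-abelian, hence cannot be generated by a single element, forcing $|X| \geq 2$; thus $B = C^*_r(F_X)$ is the reduced $C^*$-algebra of a free group of rank at least two, which has stable rank one by the results recalled just before the proposition, so that $\pi_0(\mathcal{U}(B))$ is abelian. With both hypotheses in hand, \Cref{prop:perfectcommun} gives a unitary in each $\gamma$-isotypic subspace of $\join{B}{\delta}{H}$, in particular the required $y_x$ for each $x \in X$.

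It then remains to assemble the morphism. I would set $\phi(U_x) := y_x$ and extend by the universal property of $C^*(F_X)$ to a unital $*$-homomorphism $\phi: C^*(F_X) \to \join{B}{\delta}{H}$. Equivariance is checked on generators: since $y_x$ lies in the $x$-isotypic subspace of the join, $\delta_\Delta(\phi(U_x)) = \delta_\Delta(y_x) = y_x \otimes x = \phi(U_x) \otimes x = (\phi \otimes \id)\delta(U_x)$, so the two $*$-homomorphisms $\delta_\Delta \circ \phi$ and $(\phi \otimes \id) \circ \delta$ agree on a generating set and hence everywhere.

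The only step carrying real content — everything else being formal manipulation of the universal property and the isotypic bookkeeping — is the verification that $\pi_0(\mathcal{U}(B))$ is abelian. This is precisely where the full/reduced distinction that the proposition is meant to illustrate becomes essential: it is stable rank one of the \emph{reduced} free-group algebra that lets the perfect-group commutator trick inside \Cref{prop:perfectcommun} contract the relevant unitaries to the identity, whereas in the full algebra the generators carry nontrivial $K_1$-classes and no such contraction is available.
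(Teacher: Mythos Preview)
Your proof is correct and follows essentially the same route as the paper: verify the hypotheses of \Cref{prop:perfectcommun} for $B = C^*_r(F_X)$, extract a unitary in each isotypic subspace of the join, and invoke the universal property of $C^*(F_X)$ to define the equivariant $\phi$. You supply more detail than the paper does---in particular the observation that a nontrivial perfect group forces $|X|\ge 2$ so that Dykema's stable-rank-one result applies, and the explicit reason (via a lift along $F_X\twoheadrightarrow\Gamma$) that each $B_\zeta$ contains a unitary---but the architecture is identical.
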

\begin{proof}
Write
\bes
C^*(F_X) = C^*( U_x, \hspace{2 pt} x \in X \hspace{2 pt} | \hspace{2 pt} U_x U_x^* = 1 = U_x^* U_x)
\ees
and
\bes
C^*_r(F_X) = C^*(F_X) / I, \hspace{.4 in} V_x := [U_x]_I.
\ees
By definition of the assumed coactions of $H = C_r^*(\Gamma)$ on both $C^*$-algebras, there is a unitary in each $\gamma$-isotypic subspace. Moreover, the group of path connected components of unitaries in $C_r^*(F_X)$ is abelian, so by \Cref{prop:perfectcommun}, there is a unitary $f_\gamma$ in each $\gamma$-isotypic subspace of $\join{C_r^*(F_X)}{\delta}{H}$. It follows that the map
\bes
U_x \mapsto f_x
\ees
is well-defined and equivariant from $C^*(F_X)$ to $\join{C_r^*(F_X)}{\delta}{H}$.
\end{proof}
\begin{remark}
The easiest case for which one can directly verify the hypotheses is when $\Gamma = X$ is a finite perfect group. In particular, the given coactions are then easily seen to be free (and in particular, injective).
\end{remark}

The full group $C^*$-algebra in the domain is crucial, as we need only find unitaries in a target to define a morphism, but the full group $C^*$-algebra does not have as many unitaries which may be connected to the identity. On the other hand, the reduced $C^*$-algebra in the codomain is also crucial, as many unitaries may be connected the identity, but there are not as many morphisms out of the reduced $C^*$-algebra.

\begin{question}
Is there a single $C^*$-algebra $C_\pi^*(F_X)$ which may replace both the full and reduced $C^*$-algebras in \Cref{prop:fulltoreducedpath}?
\end{question}

We now return to the classical case $H = C(G)$. As in \cite[Lemma 3.2 and Proposition 3.5]{alexbenjoin}, we may assume that $G$ is the $p$-adic integer group $\bZ_p$ for some prime $p$, and that $A = C(X)$ is commutative. Below, we find that the local-triviality dimension distinguishes objects that the spectral count does not. This follows because, in the same spirit as \Cref{prop:fulltoreducedpath}, the topological Borsuk-Ulam conjecture is also sensitive to the application of (different) quotients in the domain and codomain of a map.

For what follows, we adopt the following notational convention. Let $b$ be a divisor of $a$, and suppose $X$ admits a free action of $\bZ/a\bZ$, but $Y$ admits an action of $\bZ/a\bZ$ which factors through a free action of $\bZ/b\bZ$. Then a $\bZ/a\bZ$-equivariant map from $X$ to $Y$ will be called an $(a,b)$-equivariant map, or an $(a,b)$-map for short. For example, the natural quotient map
\bes
(\bZ/a\bZ)^{*m} \to (\bZ/b\bZ)^{*m}
\ees
is an $(a, b)$-map. Similarly, if $\omega$ is a primitive $a$th root of unity and $\omega^k$ has order $b$, then a map $\phi: \bS^{2n-1} \to \bS^{2n-1}$ satisfying $\phi(\omega \vec{z}) = \omega^k \phi(\vec{z})$ is an $(a, b)$-map.

Given a single $(a, b)$-map $\phi: \mathbb{S}^{2n - 1} \to \mathbb{S}^{2n-1}$ whose degree can be computed, it is useful to consider new $(a, b)$-maps of varying degrees which can be produced based on this information. Common constructions sometimes take a pleasant form when recast in a $C^*$-algebraic context, and we consider one such case below.

\begin{example}\label{ex:degree3shiftex}
Denote the complex coordinates of $\bS^3$ by $(z_1, z_2)$, and consider that $C(\bS^3)$ is given by the presentation
\bes
C(\bS^3) = C^*(Z_1, Z_2 \hspace{5 pt} | \hspace{5 pt} Z_1 Z_1^* + Z_2 Z_2^* = 1, \text{ generators are normal and commute}).
\ees
Moreover, $(z_1, z_2) \mapsto \begin{pmatrix} z_1 & z_2 \\ -z_2^* & z_1^* \end{pmatrix}$ shows $\bS^3$ is homeomorphic to $SU(2)$, and similarly $\mathcal{V} := \begin{pmatrix} Z_1 & Z_2 \\ -Z_2^* & Z_1^* \end{pmatrix}$ generates the cyclic group $K_1(C(\bS^3)) \cong \bZ$. If $\phi: \bS^3 \to \bS^3$ has degree $q$, then the dual $C^*$-morphism $\Phi$ is such that $\Phi(\mathcal{V}) := \begin{pmatrix} \Phi(Z_1) & \Phi(Z_2) \\ -\Phi(Z_2)^* & \Phi(Z_1)^* \end{pmatrix}$ is in the same $K_1$-class as $\mathcal{V}^q$. That is, the $K_1$-class of $\Phi(\mathcal{V})$ corresponds to the degree.

Let $\bZ/a\bZ$ act on $\bS^3$, and hence on $C(\bS^3)$, via $\alpha: (Z_1, Z_2) \mapsto (\omega Z_1, \omega Z_2)$ for a primitive $a$th root of unity $\omega$. The $K_1$-class of $Y := ((Z_1Z_1^*)^{a+1} + Z_2 Z_2^*)^{-1} \begin{pmatrix} Z_1^{a+1} & Z_2 \\ -Z_2^* & Z_1^{* a+1} \end{pmatrix}$ is $a+1$ (due to the degree computation in \cite[\S 3]{ya15degree}), and the $K_1$-class of $\mathcal{V}$ as above is 1. Both matrices have row $1$ entries in the $\omega$-eigenspace and row $2$ entries in the $\omega^{-1}$ eigenspace, so $Y^* \mathcal{V}$ is fixed by the action $\alpha$. Further, the $K_1$-class of $Y^* \mathcal{V}$ corresponds to $-(a+1) + 1 = -a$.

Now, suppose $b = a/r$ is a divisor of $a$ and $\Phi$ is of the form $\alpha(\Phi(Z_i)) = \omega^r \Phi(Z_i)$, so it is dual to an $(a, b)$ map with some degree $q$. Then the matrix
\bes
W := \Phi(\mathcal{V}) \cdot (Y^* \mathcal{V})
\ees 
has $K_1$ class corresponding to $q - a$. Moreover, $W$ is of the form $\begin{pmatrix} A & B \\ -B^* & A^* \end{pmatrix}$ for commuting normals $A$ and $B$ with $AA^* + BB^* = 1$; this follows since $\Phi(\mathcal{V})$, $Y$, and $\mathcal{V}$ have this form, the three matrices have entries in a commutative $C^*$-algebra, and $SU(2)$ is a group. Since the entries of $Y^* \mathcal{V}$ are fixed by $\alpha$, it follows that the entries of $W$ inherit the homogeneity properties of the entries of $\Phi(\mathcal{V})$: $\alpha(A) = \omega^r A$ and $\alpha(B) = \omega^r B$. Thus there is a unital $*$-homomorphism $\Psi: C(\bS^3) \to C(\bS^3)$ with $\Psi(Z_1) = A$ and $\Psi(Z_2) = B$, such that $\Psi$ is dual to an $(a, b)$-map with degree $q - a$.
\end{example}

\begin{theorem}\label{thm:nearmiss}
For any $k \geq 1$, $m \geq 4$, and $p \geq 2$, there is a $(p^{2k}, p^k)$-equivariant map
\[
\phi: (\bZ/p^{2k}\bZ)^{*m+1} \to (\bZ/p^k\bZ)^{*m}.
\]
Consequently, for any $j, k \geq 1$, $m \geq 4$, and $p \geq 2$, there is a $(p^{k \cdot 2^j}, p^k)$-equivariant map
\bes
\psi: (\bZ/p^{k \cdot 2^{j}} \bZ)^{*m + j} \to (\bZ/p^k\bZ)^{*m}.
\ees
\end{theorem}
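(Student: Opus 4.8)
The plan is to build the first map as a join of two pieces, isolating a single \emph{count-reducing} building block. Writing $(\bZ/p^{2k}\bZ)^{*m+1}=(\bZ/p^{2k}\bZ)^{*5}*(\bZ/p^{2k}\bZ)^{*m-4}$ and $(\bZ/p^k\bZ)^{*m}=(\bZ/p^k\bZ)^{*4}*(\bZ/p^k\bZ)^{*m-4}$, I would set $\phi=B*Q$, where $Q\colon(\bZ/p^{2k}\bZ)^{*m-4}\to(\bZ/p^k\bZ)^{*m-4}$ is the natural quotient $(p^{2k},p^k)$-map and $B\colon(\bZ/p^{2k}\bZ)^{*5}\to(\bZ/p^k\bZ)^{*4}$ is the building block. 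A join of two equivariant maps is equivariant for the diagonal actions, and joins of free actions (resp.\ actions factoring through a free action of the quotient) stay free (resp.\ factor through a free action), so this $\phi$ is a $(p^{2k},p^k)$-map as soon as $B$ exists; the hypothesis $m\ge 4$ is exactly what makes the $(\bZ/p^{2k}\bZ)^{*5}$ block fit (the empty join handles $m=4$, where $\phi=B$). This reduces the entire first statement, for every $m\ge 4$ of either parity, to constructing $B$.

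Set $a=p^{2k}$, $b=p^k$, $r=a/b=p^k$, and write $\bS^3_{(w)}$ for $\bS(\bC^2)$ with the rotation action of weight $w$. To assemble $B$ I would route through sphere models. Both $(\bZ/a\bZ)^{*4}$ and $\bS^3_{(1)}$ are $2$-connected, $3$-dimensional free $\bZ/a\bZ$-complexes, and both $\bS^3$ and $(\bZ/b\bZ)^{*4}$ are $2$-connected; since an equivariant map out of a free $\bZ/a\bZ$-complex is a section over the quotient and the obstruction to extending over the $i$-cells lies in $H^i$ with coefficients $\pi_{i-1}$ of the target, which vanishes for $i\le 3$, equivariant obstruction theory produces equivariant maps $(\bZ/a\bZ)^{*4}\to\bS^3_{(1)}$ and $\bS^3_{(1)}\to(\bZ/b\bZ)^{*4}$, the latter $\bZ/b\bZ$-equivariantly. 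Joining the first with the identity on the extra $\bZ/a\bZ$ factor, then composing with a still-to-be-built $\bZ/a\bZ$-map $\Phi\colon(\bZ/a\bZ)*\bS^3_{(1)}\to\bS^3_{(r)}$ (where $\bZ/a\bZ$ acts on the target by weight $r$, i.e.\ through $\bZ/a\bZ\to\bZ/b\bZ$), and finally with the second map viewed $\bZ/a\bZ$-equivariantly, yields $B$.

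The heart of the matter is $\Phi$, and this is the step I expect to be the main obstacle. Restricting $\Phi$ to the base sphere $\bS^3_{(1)}\subset(\bZ/a\bZ)*\bS^3_{(1)}$ gives a $(p^{2k},p^k)$-equivariant self-map $\phi_0$ of $\bS^3$ (weight one to weight $r$). Conversely, $(\bZ/a\bZ)*\bS^3_{(1)}$ is $a$ cones $\{g\}*\bS^3\cong D^4$ glued along the common base, with $\bZ/a\bZ$ permuting the cones in a single free orbit and fixing the base setwise; hence $\phi_0$ extends to an equivariant $\Phi$ if and only if it extends over one cone, i.e.\ if and only if $\deg\phi_0=0$ (equivariance then propagates the chosen null-homotopy to the remaining cones, and base-consistency is automatic from equivariance of $\phi_0$). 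So I must exhibit a $(p^{2k},p^k)$-map $\bS^3\to\bS^3$ of degree $0$. This is where \Cref{ex:degree3shiftex} and the arithmetic enter: the coordinatewise $r$-th power map realizes degree $r^2=p^{2k}=a\equiv 0\pmod a$, and applying the degree-lowering construction of \Cref{ex:degree3shiftex} once (it subtracts $a$ from the degree) turns this into a map of degree exactly $0$. Passing to quotient lens spaces shows that all such degrees lie in the class $r^2\bmod a$, so the threshold identity $r^2=a$ — equivalently the dimension count forcing the $4$-fold join — is precisely why $\bS^3$ is the smallest workable block and matches $m\ge 4$.

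For the ``consequently'' statement I would compose instances of the first map along a tower. With $k_i=k\cdot 2^i$ one has $p^{k_i}=(p^{k_{i-1}})^2$, so the first statement (applied with parameter $k_{i-1}$ and join index $m+i-1\ge 4$) furnishes a $(p^{k_i},p^{k_{i-1}})$-map $(\bZ/p^{k_i}\bZ)^{*m+i}\to(\bZ/p^{k_{i-1}}\bZ)^{*m+i-1}$ for each $i=1,\dots,j$; composing these $j$ maps gives $\psi$, and compatibility of the successive quotients $\bZ/p^{k_i}\bZ\to\bZ/p^{k_{i-1}}\bZ$ makes the composite a $(p^{k\cdot 2^j},p^k)$-map. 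By comparison with the degree-$0$ construction, the obstruction-theoretic passage to spheres and this iteration bookkeeping are routine.
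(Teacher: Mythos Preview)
Your proposal is correct and follows essentially the same route as the paper: produce a degree-zero $(p^{2k},p^k)$-self-map of $\bS^3$ from the coordinatewise $p^k$-th power map via the degree-shift of \Cref{ex:degree3shiftex}, extend it over the cones of $\bS^3*\bZ/p^{2k}\bZ$, sandwich with the connectivity/dimension maps into and out of sphere models, and then join with quotient maps for $m>4$ and compose along the tower for the ``consequently'' clause. The only cosmetic difference is that you factor $(\bZ/p^{2k}\bZ)^{*5}\to\bS^3*\bZ/p^{2k}\bZ$ through $(\bZ/p^{2k}\bZ)^{*4}\to\bS^3$ joined with the identity, whereas the paper maps the $5$-fold join in one step; the obstruction-theoretic content is identical.
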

\begin{proof}
We expand upon the reasoning\footnote{The authors remain very grateful to Robert Edwards for having shown them these techniques when pointing out mistakes in an early version of \cite{alexbenjoin}.} in \cite[\S 4]{alexbenjoin}. Write points $(z_1, z_2) \in \bS^{3}$ in polar coordinates $z_j = r_j u_j$, and define
\[
\psi: (r_1 u_1, r_2 u_2) \in \bS^3 \to (r_1 u_1^{p^k}, r_2 u_2^{p^k}) \in \bS^3.
\]
If $\omega$ is a primitive $p^{2k}$ root of unity, which defines a free $\bZ/p^{2k}\bZ$ action on $\bS^3$ by multiplication, then $\psi(\omega \vec{z}) = \omega^{p^k} \psi(\vec{z})$. Since $\omega^{p^k}$ is a primitive $p^k$ root of unity, $\psi$ is a $(p^{2k}, p^k)$ map. The degree of $\psi$ is $p^{2k}$, and the $(p^{2k}, p^k)$-map $\psi$ may be adjusted in degree by any multiple of $p^{2k}$, so there is a different $(p^{2k}, p^k)$-map $\gamma$ on $\bS^3$ which has degree zero. That is, $\gamma$ is homotopically trivial as an ordinary map. It follows that there is a $(p^{2k}, p^k)$-equivariant extension 
\[
\gamma: \bS^3 * \bZ/p^{2k}\bZ \to \bS^3
\]
defined by following a nullhomotopy across one cone over $\bS^3$ and extending equivariantly to the remaining cones. 

Since $(\bZ/p^{2k}\bZ)^{*5}$ is $4$-dimensional and $\bS^3 * \bZ/p^{2k}\bZ$ is $3$-connected, there is a $(p^{2k}, p^{2k})$-equivariant map $f: (\bZ/p^{2k}\bZ)^{*5} \to \bS^3 * \bZ/p^{2k}\bZ$ by the techniques of \cite{dold} (or direct construction). Similarly, as $\bS^3$ is $3$-dimensional and $(\bZ/p^k\bZ)^{*4}$ is $2$-connected, there is a $(p^k, p^k)$-equivariant map $g: \bS^3 \to (\bZ/p^k\bZ)^{*4}$.

Define $\rho$ to be the composition
\bes
(\bZ/p^{2k}\bZ)^{*5} \xrightarrow[(p^{2k}, p^{2k})]{f} \bS^3 * \bZ/p^{2k}\bZ \xrightarrow[(p^{2k}, p^k)]{\gamma}  \bS^3 \xrightarrow[(p^k, p^k)]{g} (\bZ/p^k\bZ)^{*4},
\ees
so that $\rho: (\bZ/p^{2k}\bZ)^{*5} \to (\bZ/p^k\bZ)^{*4}$ is a $(p^{2k}, p^k)$ map. If $m = 4$, define $\phi = \rho$, and if $m > 4$, join additional copies of the quotient map $\bZ/p^{2k}\bZ \to \bZ/p^k\bZ$ to $\rho$ in order to produce $\phi$.

Finally, by varying the value of $k$ and the number of joins, we may create a chain of equivariant maps
\bes
\cdots \hspace{.2 in} \xrightarrow[\hspace{.4 in}] \, (\bZ/p^{8k}\bZ)^{*m+3} \xrightarrow[(p^{8k}, p^{4k})]{} (\bZ/p^{4k}\bZ)^{*m+2} \xrightarrow[(p^{4k}, p^{2k})]{} (\bZ/p^{2k}\bZ)^{*m+1} \xrightarrow[(p^{2k}, p^k)]{} (\bZ/p^{k}\bZ)^{*m}
\ees
and form a $(p^{k \cdot 2^{j}}, p^k)$-equivariant map $\psi: (\bZ/p^{k \cdot 2^j} \bZ)^{*m + j} \to (\bZ/p^k\bZ)^{*m}$ as a composition of maps in the chain.
\end{proof}
\begin{remark}
There are no $\bZ_p$-equivariant maps $\bZ_p^{*m+1} \to \bZ_p^{*m}$ by an unpublished result of Edwards and Bestvina. Therefore, \Cref{thm:nearmiss} shows that the Edwards-Bestvina theorem is sensitive to the application of different finite quotients. 
\end{remark}

\begin{corollary}\label{cor:padiceigencount}
Let $\bZ_p$ act on $\bZ_p^{*m}$ diagonally, and let $\tau \in \widehat{\bZ_p}$ be a character. Then the spectral count of the iterated join is bounded by $c_\tau(C(\bZ_p^{*m})) \leq 1$ regardless of the choice of $m$.
\end{corollary}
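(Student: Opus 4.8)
The plan is to reinterpret the bound $c_\tau \le 1$ as the existence of a \emph{single} equivariant map into a $3$-sphere, and then to build that map by playing the \emph{finite} order of $\tau$ against the arbitrarily large $p$-power quotients of $\bZ_p$ available on the domain. First I would record that $\tau \in \widehat{\bZ_p}$ has finite order, say $p^k$, so $\tau$ factors as $\bZ_p \twoheadrightarrow \bZ/p^k\bZ \hookrightarrow U(1)$. Let $\bZ_p$ act on $\bS^3 \subset \bC^2$ by $g\cdot(z_1,z_2) = (\tau(g)z_1, \tau(g)z_2)$; this action is free as an action of $\bZ/p^k\bZ$. I claim $c_\tau(C(\bZ_p^{*m})) \le 1$ follows once we produce a $\bZ_p$-equivariant map $F = (F_1,F_2): \bZ_p^{*m} \to \bS^3$. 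Indeed, the coordinate functions satisfy $F_i(g\cdot x) = \tau(g)F_i(x)$, so the conjugates $\overline{F_i} = F_i^*$ lie in the $\tau$-isotypic subspace $C(\bZ_p^{*m})_\tau$, and since $C(\bZ_p^{*m})$ is commutative,
\[ \overline{F_1}\,\overline{F_1}^* + \overline{F_2}\,\overline{F_2}^* = |F_1|^2 + |F_2|^2 = 1 \]
is invertible. Comparing with \Cref{eq:sc} (taking $n=1$ and $a_i = b_i = \overline{F_i}$) gives $c_\tau \le 1$, so everything reduces to constructing such an $F$.

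Next I would build $F$, splitting on the size of $m$. For $m \le 4$, I would compose the diagonal quotient $\bZ_p^{*m} \to (\bZ/p^k\bZ)^{*m}$ (which is $\bZ_p$-equivariant, with the target action factoring through $\bZ/p^k\bZ$) with a $\bZ/p^k\bZ$-equivariant map $(\bZ/p^k\bZ)^{*m} \to \bS^3$; the latter exists by the Dold-type dimension/connectivity count of \cite{dold}, since $(\bZ/p^k\bZ)^{*m}$ has dimension $m-1 \le 3$ and $\bS^3$ is $2$-connected. For $m \ge 5$ the domain is too high-dimensional for this naive argument, and here I would invoke \Cref{thm:nearmiss}: taking its join-base parameter equal to $4$ and its join index equal to $j = m - 4 \ge 1$, it furnishes a $(p^{k\cdot 2^{m-4}}, p^k)$-equivariant map
\[ \psi: (\bZ/p^{k\cdot 2^{m-4}}\bZ)^{*m} \to (\bZ/p^k\bZ)^{*4}. \]
Precomposing $\psi$ with the diagonal quotient $\bZ_p^{*m} \to (\bZ/p^{k\cdot 2^{m-4}}\bZ)^{*m}$ and postcomposing with a $\bZ/p^k\bZ$-equivariant map $(\bZ/p^k\bZ)^{*4} \to \bS^3$ (again from \cite{dold}, as $(\bZ/p^k\bZ)^{*4}$ is $3$-dimensional and $\bS^3$ is $2$-connected) produces the desired $\bZ_p$-equivariant $F$. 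One checks that the resulting $\bZ_p$-action on $\bS^3$ factors as $\bZ_p \to \bZ/p^{k\cdot 2^{m-4}}\bZ \to \bZ/p^k\bZ \hookrightarrow U(1)$, i.e. precisely through $\tau$, since the reduction maps compose correctly and the free $\bZ/p^k\bZ$-action on $\bS^3$ is the one induced by $\tau$.

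The main obstacle is conceptual rather than computational: one must notice that $c_\tau \le 1$ only demands hitting the \emph{fixed} character $\tau$ of order $p^k$, so the effective action on the $3$-dimensional target $\bS^3$ is through the small group $\bZ/p^k\bZ$, whereas the domain $\bZ_p^{*m}$ may be compressed through an arbitrarily large $p$-power quotient before any obstruction is encountered. The genuine content is packaged in \Cref{thm:nearmiss}, which converts the high dimension of $\bZ_p^{*m}$ into a high-\emph{order} (and hence non-obstructing) quotient; the remaining steps are routine equivariant obstruction theory via \cite{dold} together with the bookkeeping of which finite group acts effectively at each stage. I would also note, consistent with the remark following \Cref{thm:nearmiss}, that the uniformity in $m$ is exactly the phenomenon that the Edwards--Bestvina non-existence result forbids when the \emph{same} quotient is used on domain and codomain, underscoring that the spectral count here detects the sensitivity to the choice of finite quotient.
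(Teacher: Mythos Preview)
Your proposal is correct and follows essentially the same route as the paper: reduce $c_\tau \le 1$ to building a $\tau$-equivariant map $\bZ_p^{*m} \to \bS^3$, handle $m \le 4$ by a direct dimension/connectivity argument, and handle $m \ge 5$ by precomposing with the $(p^{k\cdot 2^{m-4}}, p^k)$-map of \Cref{thm:nearmiss}. The only cosmetic differences are that the paper builds the $m \le 4$ map explicitly (as a join of maps $\bZ/p^k\bZ \to \bS^1$) rather than citing \cite{dold}, and that your bookkeeping of which of $F_i$ or $\overline{F_i}$ lands in $A_\tau$ depends on a left/right convention---either way $|F_1|^2 + |F_2|^2 = 1$ and the conclusion is unaffected.
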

\begin{proof}
Since the compact group $\bZ_p$ is the inverse limit of $\bZ/p^k\bZ$, it follows that the discrete group $\widehat{\bZ_p}$ is the direct limit of the finite cyclic groups $\widehat{\bZ/p^k\bZ} \cong \bZ/p^k \bZ$. Therefore, the order of $\tau$ is $p^k$ for some $k$, and $\tau$ is determined by some generating character $\rho \in \widehat{\mathbb{Z}/p^k\mathbb{Z}}$.

For $m \leq 4$, one may directly construct $2$ elements $a_0, a_1$ of the $\tau$-spectral subspace of $\bZ_p^{*m}$ whose square sum is invertible. First, note that the finite cyclic group $\bZ/p^{k}\bZ$ admits a $\rho$-equivariant map $\bZ/p^k \bZ \to \bS^1$, which is certainly homotopically trivial since its range is finite. It follows that there is a $\rho$-equivariant map $\bZ/p^k \bZ * \bZ/p^k \bZ \to \bS^1$. Taking the join of this map with itself produces a $\rho$-equivariant map $(\bZ/p^k \bZ)^{*4} \to \bS^1 * \bS^1 = \bS^3$. Finally, following through the quotient $\bZ_p \to \bZ/p^k \bZ$ gives a $\tau$-equivariant map $(\bZ_p)^{*4} \to \bS^3 \subseteq \bC^2$, which may be considered as $2$ elements of the $\tau$-spectral subspace whose square sum is invertible.

For $m > 4$, \Cref{thm:nearmiss} shows that we may create a chain of maps
\bes
\bZ_p^{*m} \to (\bZ/p^{k \cdot 2^{m-4}}\bZ)^{*m} \xrightarrow[(p^{k \cdot 2^{m-4}}, \hspace{2pt} p^k)]{\phantom{a}} (\bZ/p^{k}\bZ)^{*4} \to \bC^2
\ees
such that the composition again produces $2$ elements $a_0, a_1$ of the $\tau$-spectral subspace whose square sum is invertible. Therefore, in all cases, $c_\tau(C(\bZ_p^{*m})) \leq 1$.
\end{proof}

We note that \Cref{cor:padiceigencount} also answers \cite[Question 3.4]{alexbenjoin} negatively, as a positive answer would place a lower bound on the spectral counts of $\bZ_p^{*m}$ that increases with $m$. As such, since
\bes
1 = \sup_{\tau \in \widehat{\bZ_p}}\sup_{m \in \bZ^+} c_\tau(C(\bZ_p^{*m})) < \sup_{n \geq 0} \left( \trivdim{C(\bZ_p^{*n})}{C(\bZ_p)} \right) = \infty
\ees
by \Cref{cor:padiceigencount} and \cite[Theorem 2.7]{hajacindex}, the local-triviality dimension has distinguished objects which the spectral count could not. However, as in \cite{alexbenjoin}, we note that a map $X \to X * \bZ_p$ produces not only equivariant maps $X \to \bZ_p^{*n}$, but also $X \to \beta \bZ_p^{*\infty}$, where $\bZ_p^{*\infty}$ is the inductive limit of $\bZ_p^{*n}$ and $\beta$ denotes the Stone-Cech compactification. Whether or not the spectral count of $C(\beta \bZ_p^{*\infty})$ (defined in the very precise sense of \cite[Question 3.7]{alexbenjoin} due to non-continuity) is infinite, or whether or not maps of the form $X \to \bZ_p^{*\infty}$ can exist for other reasons (as in \cite[Question 3.6]{alexbenjoin}) is still of interest, as this could potentially prove the Type 1 conjecture in the classical case $H = C(G)$. That is, the Type 1 conjecture would hold even for spaces with infinite local-triviality dimension. Given the connections between the Type 1 conjecture and the Hilbert-Smith conjecture detailed in \cite{weakhs}, such a problem is likely to be quite difficult by association.

The proof of \Cref{cor:padiceigencount} also produces inverse limit spaces corresponding to chains of the form
\bes
\cdots \hspace{.2 in} \xrightarrow[\hspace{.4 in}] \, (\bZ/p^{8k}\bZ)^{*7} \xrightarrow[(p^{8k}, p^{4k})]{} (\bZ/p^{4k}\bZ)^{*6} \xrightarrow[(p^{4k}, p^{2k})]{} (\bZ/p^{2k}\bZ)^{*5} \xrightarrow[(p^{2k}, p^k)]{} (\bZ/p^{k}\bZ)^{*4}
\ees
for any fixed $k$. The space $X = \lim\limits_{\leftarrow n} \hspace{4 pt} (\bZ/p^{2^{n} \cdot k}\bZ)^{*(n+4)}$ admits a free action of $\bZ_p$, and it seems that $X$ is not clearly of finite local-triviality dimension, so that it should be investigated as a potential counterexample to the Type 1 conjecture. The inverse limit $X$ may be considered a way to form an \qu{infinite join} that is distinct from the usual inductive limit formulation (which is not compact).

We close this section with computations on the $\theta$-deformed spheres that concern degree-shifting procedures, local-triviality dimension, and the spectral count. The $\theta$-deformed spheres $C(\bS^{2k-1}_\theta)$ of \cite{matsumoto, natsume} are given by the presentations
\bes
C(\bS^{2k-1}_\theta) := C^*\left( Z_1, \ldots, Z_k \hspace{4 pt} \bigg\rvert \hspace{4 pt} [Z_j, Z_j^*] = 0, \hspace{4 pt} Z_j Z_l = e^{2 \pi i \theta_{lj}}Z_l Z_j, \hspace{4 pt} \sum\limits_{j=1}^k Z_j Z_j^* = 1\right),
\ees
indexed by antisymmetric matrices $\theta \in M_k(\bR)$. Each $\theta$-deformed sphere has $K_1(C(\bS^{2k-1}_\theta)) \cong \bZ$, and in fact, $K_1(C(\bS^{2k-1}_\theta))$ is generated by a unitary matrix $\mathcal{V}_\theta$ of dimension $2^{k-1} \times 2^{k-1}$, where each entry of $\mathcal{V}_\theta$ is a $*$-monomial. Moreover, the entries (or rather, the coefficients) of $\mathcal{V}_\theta$ vary continuously in $\theta$. With notation as in \cite[(4.8)]{bentheta}, if 
\bes
R_\omega: Z_i \mapsto \omega_i Z_i
\ees
denotes a rotation action for $m$th roots of unity $\omega_1, \ldots, \omega_k$, then there are diagonal matrices $A_\omega, B_\omega \in U_{2^{k-1}}(\bC)$, whose entries are independent of $\theta$, such that $A_\omega^m = B_\omega^m = I$ and
\be\label{eq:rotconj}
R_\omega(\mathcal{V}_\theta) = A_\omega \mathcal{V}_\theta B_\omega.
\ee
We note that while the claim is made in \cite{bentheta} only for primitive roots of unity, there is no need for that assumption (primitivity was only used starting in later results, such as \cite[Theorem 4.11 and Corollary 4.12]{bentheta}). We consider non-primitive roots of unity in the following theorem.

\begin{theorem}\label{thm:Ktheoryexhaustion}
Let $\omega_1, \ldots, \omega_k$ be primitive $m$th roots of unity, and let $\gamma_1, \ldots, \gamma_k$ be (possibly nonprimitive) $m$th roots of unity. Suppose that $\Phi_1, \Phi_2: C(\bS^{2k-1}_\theta) \to C(\bS^{2k-1}_\zeta)$ are $(R_\gamma, R_\omega)$-equivariant unital $*$-homomorphisms. Then the integers $[\Phi_1(\mathcal{V}_\theta)]_{K_1}$ and $[\Phi_2(\mathcal{V}_\theta)]_{K_1}$ are congruent modulo $m$.
\end{theorem}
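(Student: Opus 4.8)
The plan is to transport the equivariance hypotheses onto the unitary matrices $\Phi_i(\mathcal{V}_\theta)$, reduce the asserted congruence to a single divisibility statement in $K_1$, and then settle that divisibility by a covering-space degree computation. First I would record how $W_i := \Phi_i(\mathcal{V}_\theta)$ transforms under the codomain action. Reading $(R_\gamma, R_\omega)$-equivariance as $\Phi_i \circ R_\gamma = R_\omega \circ \Phi_i$ and applying $\Phi_i$ entrywise to the domain relation \eqref{eq:rotconj}, namely $R_\gamma(\mathcal{V}_\theta) = A_\gamma \mathcal{V}_\theta B_\gamma$, and using that $A_\gamma, B_\gamma$ are constant (scalar-entry) matrices independent of $\theta$, I obtain
\[
R_\omega(W_i) = A_\gamma W_i B_\gamma, \qquad i = 1, 2,
\]
where $A_\gamma, B_\gamma$ are the diagonal unitaries of \eqref{eq:rotconj} evaluated at the (possibly nonprimitive) roots $\gamma_j$, satisfying $A_\gamma^m = B_\gamma^m = I$.

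The next step is to pass to the difference. Setting $W := W_1 W_2^*$, a unitary in $M_{2^{k-1}}(C(\bS^{2k-1}_\zeta))$ with $[W]_{K_1} = [W_1]_{K_1} - [W_2]_{K_1}$, the boundary twist $B_\gamma$ cancels because it is unitary:
\[
R_\omega(W) = (A_\gamma W_1 B_\gamma)(A_\gamma W_2 B_\gamma)^* = A_\gamma W_1 W_2^* A_\gamma^* = A_\gamma W A_\gamma^*.
\]
Hence $W$ is invariant under the twisted automorphism $\beta := \mathrm{Ad}(A_\gamma^*)\circ R_\omega$, i.e.\ $\beta(W)=W$; since $A_\gamma$ is a constant matrix with $A_\gamma^m = I$ and $R_\omega$ has order $m$, one checks $\beta^m = \mathrm{id}$, so $\beta$ is a genuine $\bZ/m\bZ$-action and $W$ represents a class in the $K_1$ of its fixed-point algebra. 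The theorem thereby reduces to the single divisibility
\[
[\Phi_1(\mathcal{V}_\theta)]_{K_1} - [\Phi_2(\mathcal{V}_\theta)]_{K_1} = [W]_{K_1} \in m\,K_1(C(\bS^{2k-1}_\zeta)) = m\bZ.
\]
Primitivity of the $\omega_j$ is used here only to guarantee that $R_\omega$ is a \emph{free} action of order exactly $m$; the $\gamma_j$ need not be primitive, as they enter solely through the inner, self-cancelling twist $A_\gamma$.

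Finally I would establish this divisibility, which is the crux. Because $R_\omega$ is free of order $m$, its fixed-point algebra is a $\theta$-deformation of the lens space $L = \bS^{2k-1}/(\bZ/m\bZ)$, and the inclusion of fixed points into the whole algebra models, at the commutative level $\zeta = 0$, the pullback $\pi^*\colon K^1(L) \to K^1(\bS^{2k-1}) \cong \bZ$ along the $m$-fold covering $\pi\colon \bS^{2k-1} \to L$. Since $\pi_*[\bS^{2k-1}] = m[L]$, the top component of the odd Chern character (which detects $K^1(\bS^{2k-1})$ integrally) is multiplied by $m$ under $\pi^*$, so every class pulled back from the quotient lies in $m\bZ$; the twist $A_\gamma$ is inner and hence invisible to $K$-theory. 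By deformation invariance of $K$-theory for the $\theta$-deformed spheres and their lens-space quotients, together with continuity of the constructions in $\zeta$, the same image bound persists for $\zeta \neq 0$, and applying it to $W$ gives the claim. The main obstacle is exactly this last step: rigorously identifying the $K_1$-image of the fixed-point inclusion as $m\bZ$ in the noncommutative, twisted setting. I expect the cleanest route to be the combination of the covering-degree computation with deformation invariance just described; an alternative would be to pair $W$ with the canonical $R_\omega$-invariant top-dimensional cyclic cocycle on $C(\bS^{2k-1}_\zeta)$ and exploit that this cocycle descends from the quotient with a multiplicative factor of $m$.
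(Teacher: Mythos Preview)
Your reduction is exactly the paper's: form $W = \Phi_1(\mathcal{V}_\theta)\Phi_2(\mathcal{V}_\theta)^*$, use equivariance and the scalar nature of $A_\gamma, B_\gamma$ to obtain $R_\omega(W) = A_\gamma W A_\gamma^*$, and then argue $[W]_{K_1} \in m\bZ$. The difference lies only in the last step. The paper does not prove the divisibility from scratch; it simply invokes \cite[Theorem~4.11]{bentheta}, which is precisely the ready-made statement that if each $\omega_i$ is a primitive $m$th root of unity and a unitary matrix $W$ over $C(\bS^{2k-1}_\zeta)$ satisfies $R_\omega(W) = D W D^*$ for some $D \in U_{2^{k-1}}(\bC)$ with $D^m = I$, then $[W]_{K_1}$ is a multiple of $m$. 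So what you have written as ``the crux'' is, in the paper, a one-line citation.

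Your proposed route to that divisibility---pass to the fixed-point algebra of the twisted action, identify it (up to the inner twist and deformation) with a $\zeta$-deformed lens space, and read off the factor $m$ from the covering degree---is a plausible outline and probably close in spirit to how \cite[Theorem~4.11]{bentheta} is actually established. But, as you yourself flag, the sketch is not yet a proof. The delicate point is not that the twisted and untwisted actions have the same $K$-theory in the abstract (exterior equivalence and freeness give Morita-equivalent fixed-point algebras), but that this identification is compatible with the \emph{inclusion} map into $M_{2^{k-1}}(C(\bS^{2k-1}_\zeta))$ whose $K_1$-image you need to bound. Saying ``$A_\gamma$ is inner and hence invisible'' elides exactly this compatibility. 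If you want a self-contained argument, you should either chase the Morita equivalence through the inclusions explicitly, or carry out the pairing with the $R_\omega$-invariant top cyclic cocycle that you mention as an alternative; otherwise, citing \cite[Theorem~4.11]{bentheta} as the paper does is the clean way to finish.
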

\begin{proof}
It suffices to prove that $[\Phi_1(\mathcal{V}_\theta)\Phi_2(\mathcal{V}_\theta)^{*}]_{K_1}$ is a multiple of $m$. First, note that since each $\gamma_i$ is an $m$th root of unity (though perhaps not primitive), we have that the matrices $A_\gamma$ and $B_\gamma$ fulfilling the role of \Cref{eq:rotconj} for the rotation $R_\gamma$ satisfy $A_\gamma^m = I = B_\gamma^m$. Also, since the $\Phi_i$ are $(R_\gamma, R_\omega)$-equivariant unital $*$-homomorphisms, and the matrices $A_\gamma$ and $B_\gamma$ have scalar entries, it follows that
\bes\begin{aligned}
R_\omega( \Phi_1(\mathcal{V}_\theta) \Phi_2(\mathcal{V}_\theta)^*) 	&= \Phi_1(R_\gamma(\mathcal{V}_\theta))\Phi_2(R_\gamma(\mathcal{V}_\theta^*)) \\
																	&= \Phi_1(A_\gamma \mathcal{V}_\theta B_\gamma)\Phi_2(B_\gamma^* \mathcal{V}_\theta^* A_\gamma^*)\\
																	&= A_\gamma \Phi_1(\mathcal{V}_\theta) \Phi_2(\mathcal{V}_\theta)^* A_\gamma^*.
\end{aligned} \ees
That is, $W := \Phi_1(\mathcal{V}_\theta) \Phi_2(\mathcal{V}_\theta)^*$ satisfies $R_\omega(W) = A_\gamma W A_\gamma^*$. Since $A_\gamma \in U_{2^{k-1}}(\bC)$ satisfies $A_\gamma^m = I$, and $R_\omega$ is such that each $\omega_i$ is a \textit{primitive} $m$th root of unity, it follows from \cite[Theorem 4.11]{bentheta} that $[W]_{K_1}$ is a multiple of $m$.
\end{proof}

The case of \Cref{thm:Ktheoryexhaustion} in which each $\gamma_i$ is a \textit{primitive} $m$th root of unity is covered by \cite[Corollary 4.12]{bentheta}. In fact, in this case, the computed $K_1$-classes are both congruent to $1$ modulo $m$, and in particular they are nontrivial if $m \geq 2$. 

Restricting to the commutative case $\theta = 0 = \zeta$ and applying the Chern character recovers a (likely well-known) topological Borsuk-Ulam result for rotation actions on spheres, where said actions may or may not be free. We list it here for completeness.

\begin{corollary}\label{cor:degreeexhaustion}
Let $\omega_1, \ldots, \omega_k$ be primitive $m$th roots of unity, and let $\gamma_1, \gamma_2, \ldots, \gamma_k$ be primitive $n_i$th roots of unity, where each $n_i$ is a divisor of $m$. Equip a domain sphere $\bS^{2k-1}$ with the rotation action $(z_1, \ldots, z_k) \mapsto (\omega_1 z_1, \ldots, \omega_k z_k)$ and a codomain sphere $\bS^{2k-1}$ with the rotation action $(z_1, \ldots, z_k) \mapsto (\gamma_1 z_1, \ldots, \gamma_k z_k)$. If $\phi: \bS^{2k-1} \to \bS^{2k-1}$ is an equivariant, continuous map, then 
\bes
\emph{deg}(\phi) \equiv \cfrac{m^k}{n_1 \cdots n_k} \textrm{ mod } m.
\ees
\end{corollary}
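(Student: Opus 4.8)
The plan is to reduce to the commutative, undeformed case $\theta=\zeta=0$ of \Cref{thm:Ktheoryexhaustion} and to translate its $K_1$-conclusion into a statement about degrees. Since $\mathcal{V}_0$ generates $K_1(C(\bS^{2k-1}))\cong\bZ$ and maps under the Chern character to a generator of $H^{2k-1}(\bS^{2k-1};\bQ)$, on which a self-map acts by multiplication by its degree, naturality of the Chern character gives $[\Phi(\mathcal{V}_0)]_{K_1}=\deg(\phi)\cdot[\mathcal{V}_0]_{K_1}$ whenever $\Phi$ is the pullback $*$-homomorphism dual to $\phi$. Thus \Cref{thm:Ktheoryexhaustion} says precisely that any two continuous equivariant self-maps of $\bS^{2k-1}$ (source rotation $R_\omega$, target rotation $R_\gamma$) have degrees congruent modulo $m$, so it suffices to exhibit a single equivariant map and compute its degree.

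First I would construct the map in polar coordinates: writing $z_j=r_j u_j$ with $r_j\ge 0$ and $u_j\in\bS^1$, set
\[
\phi_0(z_1,\dots,z_k):=(r_1 u_1^{a_1},\dots,r_k u_k^{a_k}),
\]
which is continuous and preserves $\sum_j r_j^2=1$, hence maps $\bS^{2k-1}$ to itself. A direct check gives $\phi_0(\omega_1 z_1,\dots,\omega_k z_k)=(\omega_1^{a_1} r_1 u_1^{a_1},\dots,\omega_k^{a_k} r_k u_k^{a_k})$, so $\phi_0$ is equivariant exactly when $\omega_j^{a_j}=\gamma_j$ for all $j$. Taking $a_j=m/n_j$ realizes $\gamma_j=\omega_j^{m/n_j}$, the primitive $n_j$-th root of unity determined by $\omega_j$ (note $\omega_j^{m/n_j}$ has order $n_j$ since $n_j\mid m$), matching the hypotheses.

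Next I would compute $\deg(\phi_0)$. Viewing $\bS^{2k-1}$ as the iterated join $(\bS^1)^{*k}$ with $(r_1,\dots,r_k)$ the join parameters, $\phi_0$ is the join of the circle self-maps $u_j\mapsto u_j^{a_j}$ of degree $a_j$. Since the degree of a join of maps is the product of the degrees, $\deg(\phi_0)=\prod_j a_j=\prod_j (m/n_j)=m^k/(n_1\cdots n_k)$. Combining this with the congruence from the first paragraph yields $\deg(\phi)\equiv m^k/(n_1\cdots n_k)\pmod m$ for every equivariant $\phi$.

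The main obstacle is the degree computation for $\phi_0$: one must justify the join-of-maps degree formula in this setting (equivalently, compute the induced map on $H^{2k-1}$ directly, say by a cellular or reduced smash-product argument, taking care that $\phi_0$ degenerates on the subspheres where some $r_j=0$). The reduction via \Cref{thm:Ktheoryexhaustion} and the Chern character is then routine, as is the verification of equivariance of $\phi_0$.
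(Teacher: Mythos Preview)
Your approach is essentially the paper's: construct an explicit equivariant ``polar power'' map, compute its degree as a product of the circle exponents, and invoke \Cref{thm:Ktheoryexhaustion} at $\theta=\zeta=0$ (translated via the Chern character) to conclude that every equivariant map has the same degree modulo $m$.

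There is one gap in your construction. You set $a_j=m/n_j$ and then assert that $\gamma_j=\omega_j^{m/n_j}$, but the hypotheses only say that $\gamma_j$ is \emph{some} primitive $n_j$th root of unity, not this particular one; with your choice, $\phi_0$ need not be $(R_\gamma,R_\omega)$-equivariant for the given $\gamma$. The paper handles this by first choosing $b_i$ so that $\omega_i^{b_i}=\gamma_i$ (possible because $\omega_i$ is a primitive $m$th root and $\gamma_i$ is an $m$th root), and only then arguing that $b_i\equiv m/n_i\pmod m$, whence $\deg(\psi_b)=b_1\cdots b_k\equiv m^k/(n_1\cdots n_k)\pmod m$. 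Your degree computation via the join-of-circle-maps formula is fine and matches the paper's unproven assertion that the degree is the product of the exponents.
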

\begin{proof}
Write $(z_1, \ldots, z_k) \in \bS^{2k-1}$ in polar coordinates $z_j = r_j u_j$, and consider maps of the form
\be\label{eq:polarpower}
\psi_b: (r_1 u_1, \ldots, r_k u_k) \mapsto (r_1 u_1^{b_1}, \ldots, r_k u_k^{b_k})
\ee
for $b_1, \ldots, b_k \in \bZ^+$, so that if $\psi_b(z_1, \ldots, z_k) = (w_1, \ldots, w_k)$, it follows that $\psi_b(\omega_1 z_1, \ldots, \omega_k z_k) = (\omega_1^{b_1} w_1, \ldots, \omega_k^{b_k} w_k)$. Since the $\omega_i$ are primitive $m$th roots of unity, the $\gamma_i$ are primitive $n_i$th roots of unity, and $n_i$ divides $m$, we may select $b_i$ such that $\omega_i^{b_i} = \gamma_i$. That is, $\psi_b$ is equivariant for the given actions. For such a choice of $b_i$, we will have $b_i \equiv \frac{m}{n_i} \hspace{1pt} \textrm{ mod } m$, and hence $\textrm{deg}(\psi_b) = b_1\cdots b_k \equiv \cfrac{m^k}{n_1 \cdots n_k} \textrm{ mod } m$.

Suppose that $\phi$ is another equivariant map on $\bS^{2k-1}$ for the given actions. Then \Cref{thm:Ktheoryexhaustion}, in the case $\theta = 0 = \zeta$, shows that the degrees of $\psi_b$ and $\phi$ are congruent modulo $m$.
\end{proof}
\begin{remark}
Similar claims can be made in the $\theta$-deformed case, so long as one can provide at least one example of an equivariant map $C(\bS^{2n-1}_\theta) \to C(\bS^{2n-1}_\zeta)$ for the given actions and then compute that map's effect on odd $K$-theory. Since a polar decomposition does exist in the $\theta$-deformed case, this is sometimes possible using a formula analogous to \Cref{eq:polarpower} to define a map. However, such computations depend quite heavily on the deformation parameter.
\end{remark}

The case $n_i = m = 2$ of \Cref{cor:degreeexhaustion} is the traditional odd-degree formulation of the Borsuk-Ulam theorem (restricted to spheres of odd dimension). We note that it is sometimes possible to obtain $\textrm{deg}(\phi) = 0$ in \Cref{cor:degreeexhaustion}, so the result does not always imply that $\phi$ is homotopically nontrivial; this is exactly the pathology abused in \Cref{ex:degree3shiftex} and the proof of \Cref{thm:nearmiss}. Rather, \Cref{cor:degreeexhaustion} implies that the degrees of equivariant maps that may be obtained using this degree-shifting technique exhaust all possible options.

While \Cref{thm:Ktheoryexhaustion} shows that some degree properties of equivariant morphisms $C(\bS^{2n-1}_\theta) \to C(\bS^{2n-1}_\zeta)$ are restricted in such a way that is independent of the deformation parameter, we note that the local-triviality dimension and spectral count do not necessarily remain the same when the deformation parameter changes. The Borsuk-Ulam theorem shows that the antipodal action on the  commutative sphere $C(\bS^{2n-1})$ has local-triviality dimension $2n - 1$ and spectral count $n - 1$, but \cite[Theorem 3.15]{bentheta} implies that the spectral count can become lower under a $\theta$-deformation. A small adjustment to that result also applies to the local-triviality dimension, so long as we choose a $\theta$-deformed sphere carefully.

\begin{proposition}\label{prop:deformless}
Let $C(\bS^{2n-1}_\theta)$ be the $(2n-1)$-dimensional $\theta$-deformed sphere whose generators $Z_j$ pairwise anticommute. Then the antipodal action $Z_j \mapsto -Z_j$ has both local-triviality dimension and spectral count equal to $1$.
\end{proposition}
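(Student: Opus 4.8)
I would treat the two invariants in parallel, extracting both upper bounds from a single construction that uses the anticommutation relations, and both lower bounds from the impossibility of trivializing the free antipodal action. Write $A=C(\bS^{2n-1}_\theta)$ for the anticommuting sphere (so $Z_iZ_j=-Z_jZ_i$ and $Z_iZ_j^*=-Z_j^*Z_i$ for $i\neq j$), let $\sigma$ denote the antipodal automorphism $\sigma(Z_j)=-Z_j$, and take $n\geq 2$, the relations being vacuous for one generator. Then $A_\gamma$ is the closed span of odd-degree $*$-monomials (the $(-1)$-eigenspace of $\sigma$). First I would record the two relevant reformulations for $H=C^*(\bZ/2\bZ)$: that $c_\gamma(A)\leq 1$ means there are $a_0,b_0,a_1,b_1\in A_\gamma$ with $a_0b_0^*+a_1b_1^*$ invertible, and that $\trivdim{A}{H}\leq 1$ means there are \emph{self-adjoint} $s_0,s_1\in A_\gamma$ with $|s_0|+|s_1|=1$ — the latter because an equivariant order-zero map $\gamma_j\colon H\to A$ is equivalent to the data of a self-adjoint $s_j=\gamma_j(g)\in A_\gamma$ with $\gamma_j(1)=|s_j|$, the order-zero condition being automatic.

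For the upper bounds, set $\Sigma=\sum_{j=1}^n Z_j$ and $s_0=\tfrac12(\Sigma+\Sigma^*)$, $s_1=\tfrac1{2i}(\Sigma-\Sigma^*)$, both self-adjoint and odd; writing $X_j=\tfrac12(Z_j+Z_j^*)$ and $Y_j=\tfrac1{2i}(Z_j-Z_j^*)$, the decisive point is that the index-$i$ and index-$j$ pieces anticommute for $i\neq j$, so all cross terms vanish and $s_0^2=\sum_j X_j^2$, $s_1^2=\sum_j Y_j^2$, whence $s_0^2+s_1^2=\sum_j(X_j^2+Y_j^2)=\sum_j Z_jZ_j^*=1$. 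Taking $a_i=b_i=s_i$ gives $a_0b_0^*+a_1b_1^*=1$, so $c_\gamma(A)\leq 1$. The \qu{small adjustment} for the triviality dimension is a passage from the $\ell^2$- to the $\ell^1$-normalization: the same anticommutation shows each $X_i^2,Y_i^2$ commutes with every $X_j,Y_j$, hence is central, so $s_0^2,s_1^2$ are central and $|s_0|=(s_0^2)^{1/2}$, $|s_1|=(s_1^2)^{1/2}$ commute with everything. Since $|s_0|^2+|s_1|^2=1$, functional calculus in the abelian algebra they generate gives $R:=|s_0|+|s_1|\geq(|s_0|^2+|s_1|^2)^{1/2}=1$, so $R$ is invertible; then $u_i:=s_iR^{-1}$ are self-adjoint, odd, and satisfy $|u_0|+|u_1|=(|s_0|+|s_1|)R^{-1}=1$, giving $\trivdim{A}{H}\leq 1$.

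For the lower bounds I would exclude the value $0$. Let $\tau$ be the canonical $\bT^n$-invariant (hence $\sigma$-invariant) faithful trace; since $\sigma=-\mathrm{id}$ on $A_\gamma$, $\tau$ vanishes on $A_\gamma$. If $\trivdim{A}{H}=0$ there is an odd \emph{self-adjoint} unitary $u\in A_\gamma$, so $p=\tfrac12(1-u)$ is a projection with $\tau(p)=\tfrac12$; as $K_0(A)\cong\bZ\cdot[1]$ (by homotopy invariance of $K$-theory under $\theta$-deformation), $\tau$ takes integer values on projections, a contradiction, so $\trivdim{A}{H}\geq1$. For the spectral count, $c_\gamma(A)=0$ would give $a_0,b_0\in A_\gamma$ with $a_0b_0^*$ invertible; faithfulness of $\tau$ promotes the one-sided inverse to a two-sided one (the idempotent $r a_0$, $r=b_0^*(a_0b_0^*)^{-1}\in A_\gamma$, has $\tau$-trace $1$), so $a_0$ is invertible and its polar decomposition yields an odd \emph{unitary} $w\in A_\gamma$, i.e.\ an equivariant unital $*$-homomorphism $C(\bS^1)\to A$ for the antipodal actions. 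I would then invoke the noncommutative Borsuk--Ulam/degree obstruction for rotation actions (the $K$-theoretic input behind \Cref{thm:Ktheoryexhaustion} and \cite{bentheta}, with $m=2$) to rule out such a $w$, giving $c_\gamma(A)\geq1$.

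The main obstacle is this last step. The trace/$K_0$ argument cleanly kills odd \emph{self-adjoint} unitaries and thereby settles $\trivdim\geq 1$, but it does \emph{not} kill a non-self-adjoint odd unitary $w$: the associated self-adjoint unitary $\bigl(\begin{smallmatrix}0&w\\ w^*&0\end{smallmatrix}\bigr)\in M_2(A)$ yields a projection of integer trace, so no numerical contradiction arises. Excluding odd unitaries therefore genuinely requires the degree obstruction rather than a trace computation, and isolating the precise form of that input — bridging the dimension gap between $\bS^1$ and $\bS^{2n-1}_\theta$ — is the point I expect to demand the most care. By contrast the anticommutator cancellations and the centrality/commutation facts licensing the $R^{-1}$ rescaling are routine.
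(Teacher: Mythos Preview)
Your upper-bound construction is exactly the paper's: decompose $Z_j=X_j+iY_j$, set $a_0=\sum X_j$, $a_1=\sum Y_j$, and use the anticommutation to get $a_0^2+a_1^2=1$. Your rescaling by $R^{-1}=(|s_0|+|s_1|)^{-1}$ is a clean variant of the paper's functional-calculus step $b_i=a_i|a_i|^{-1/2}$; both exploit the centrality of $a_i^2$ that you correctly identify.

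The lower bounds are where you diverge. For $\trivdim{A}{H}\geq 1$ the paper takes a shorter route: annihilating all but one generator gives an equivariant surjection $C(\bS^{2n-1}_\theta)\to C(\bS^1)$, and the monotonicity property (D) together with $\trivdim{C(\bS^1)}{C(\bZ/2\bZ)}=1$ finishes it. Your trace/$K_0$ argument is correct but heavier (it needs $K_0(C(\bS^{2n-1}_\theta))\cong\bZ$), and it buys nothing extra since, as you note, it cannot exclude non-self-adjoint odd unitaries.

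For $c_\gamma(A)\geq 1$ your diagnosis of the obstacle is right but your proposed cure is not. \Cref{thm:Ktheoryexhaustion} concerns equivariant morphisms between $\theta$-spheres of the \emph{same} dimension $2k-1$; it gives no obstruction to a map $C(\bS^1)\to C(\bS^{2n-1}_\theta)$, so there is no ``dimension gap'' to bridge via that result. The paper does not attempt this either: it simply invokes \cite[Propositions 3.9 and 3.11]{bentheta}, which directly assert that $C(\bS^{2n-1}_\theta)$ has no odd left-invertible elements. That statement already subsumes your reduction from left-invertible to invertible via the faithful trace (which is fine, incidentally, though you should say ``stably finite'' rather than compute $\tau(ra_0)$ on a non-self-adjoint idempotent). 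So the step you flagged as most delicate is in fact handled by a direct citation, not by the degree machinery of \Cref{thm:Ktheoryexhaustion}.
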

\begin{proof}
Decomposing the normal generators as $Z_j = X_j + i Y_j$ shows that the $X_j$ pairwise anticommute and the $Y_j$ pairwise anticommute. The self-adjoint elements
\bes
a_0 := X_1 + \ldots + X_n \hspace{1 in} a_1 := Y_1 + \ldots + Y_n
\ees
are odd (that is, in the $-1$ spectral subspace of the action) and have
\bes
a_0^2 + a_1^2 = (X_1^2 + \ldots + X_n^2) + (Y_1^2 + \ldots + Y_n^2) = Z_1Z_1^* + \ldots + Z_nZ_n^* = 1.
\ees
This implies that the spectral count of $C(\bS^{2n-1}_\theta)$ is at most $1$. Next, it follows from \cite[Propositions 3.9 and 3.11]{bentheta} that $C(\bS^{2n-1}_\theta)$ has no odd left-invertible elements, so the spectral count is exactly $1$.  

We now consider local-triviality dimension. Define $b_i := a_i |a_i|^{-1/2}$ using the functional calculus (which is possible since the power $1/2$ is less than $1$), so that $b_0$ and $b_1$ are self-adjoint odd elements with $|b_0| + |b_1| = 1$. Letting $C(\bZ/2\bZ) = \bC + \bC \omega$ for a self-adjoint odd unitary $\omega$, there are completely positive, contractive, order zero maps $\gamma_i: C(\bZ/2\bZ) \to C(\bS^{2n-1}_\theta)$ defined by
\bes
\gamma_i(1) = |b_i| \hspace{1 in} \gamma_i(\omega) = b_i,
\ees
and hence $\trivdim{C(\bS^{2n-1}_\theta)}{C(\bZ/2\bZ)} \leq 1$. Since there is an equivariant map $C(\bS^{2n-1}_\theta) \to C(\bS^1)$ from annihilation of all but one generator, $\trivdim{C(\bS^{2n-1}_\theta)}{C(\bZ/2\bZ)} \geq \trivdim{C(\bS^1)}{C(\bZ/2\bZ)} = 1$.
\end{proof}

The local-triviality dimension as currently defined counts how many completely positive contractive, equivariant, order zero maps $\gamma_0, \ldots, \gamma_{n}: H \to A$ are needed to achieve $\sum \gamma_i(1) = 1$. We note, however, that when $A = C(X)$ and $H = C(G)$ are commutative, this condition is equivalent to the demand that $\sum \gamma_i(1) := \phi$ is invertible, as we may always rescale the maps by the central positive invariant element $\phi^{-1}$. Further, since the noncommutative Borsuk-Ulam theorem proved in \cite[Theorem 5.3]{hajacindex} ultimately reduces to the commutative setting, there appears to be no meaningful distinction between the two possible definitions in Borsuk-Ulam contexts. In the quantum setting, relaxing the condition on $\sum \gamma_i(1)$ may allow more coactions to be finite-dimensional, or it may pose problems with the potential growth of the dimension of $E_nH$ in the parameter $n$.

\begin{question}
In the context of noncommutative Borsuk-Ulam problems, is there a meaningful distinction between the condition that $\sum \gamma_i(1) = 1$ or the condition that $\sum \gamma_i(1)$ is invertible in the definition of local-triviality dimension?
\end{question}

Regardless of the choice, it is possible for the local-triviality dimension to decrease in a deformation. In fact, if the definition is adjusted, then a computation similar to \Cref{prop:deformless} would extend to more $\theta$-deformed spheres, as in \cite[Theorem 3.15]{bentheta}. Similarly, an adjusted definition would allow the claim \lq\lq a $\bZ/2\bZ$ action is free if and only if it has finite (adjusted) local-triviality dimension\rq\rq\hspace{0pt}. This is consistent with the observation that the spectral count is sufficient to prove a Borsuk-Ulam theorem for finite cyclic group actions \cite[Corollary 2.4 Alternative Proof A]{benfree} without explicitly reducing to commutative quotients. From this point of view, it is somewhat bizarre that the local-triviality dimension as currently defined might only capture the $\bZ/2\bZ$ case when applied to a commutative quotient $A / I \cong C(X)$ instead of $A$ itself.

\section{Type 2 and Cleftness}\label{se.type2}

In \cite[Corollary 2.5]{alexbenjoin}, it is shown that any compact quantum group $H = C(\bT^X)$ corresponding to a torus admits a free coaction on some unital $C^*$-algebra $A$ which gives a counterexample to \Cref{conj:type12} Type 2. That is, there exists an equivariant morphism $H \to \join{A}{\delta}{H}$. On the other hand, if $H$ is a compact quantum group which admits an embedding $K \hookrightarrow H$ of a nontrivial finite-dimensional compact quantum group $K$, then there are no counterexamples among free coactions of $H$ by \cite[Theorem 2.7]{alexbenjoin}. Since a compact abelian group $G$ is connected if and only if $\Gamma = \widehat{G}$ is torsion-free, this suggests that in the abelian case, the classification of when Type 2 counterexamples exist may come down to a connectedness or torsion property.

Let $\Gamma$ be an abelian discrete group. Below we show that one way to achieve counterexamples to Type 2 is to connect a morphism of $C^*(\Gamma)$ of the form $\gamma \mapsto \begin{pmatrix} \gamma \\ & \gamma^{-1} \end{pmatrix}$ to the trivial representation $\gamma \mapsto \begin{pmatrix} 1 \\ & 1 \end{pmatrix}$ in a stabilized fashion. In fact, it suffices to contract an infinite direct sum of such representations in a stabilized fashion, which is a more relaxed condition. We denote endomorphisms of $C^*(\Gamma)$ that are induced by endomorphisms of $\Gamma$ as follows: 
\bes
\textrm{id}: \gamma \to \gamma, \hspace{.4 in} \text{inv}: \gamma \to \gamma^{-1}, \hspace{.4 in} \text{triv}: \gamma \to 1, \hspace{.4 in} \text{squ}: \gamma \to \gamma^2.
\ees
For any unital $C^*$-algebra $B$, we give the space of unital $*$-homomorphisms $\textrm{Hom}(C^*(\Gamma), B)$ the pointwise norm topology. Also, if $\cH$ is an infinite-dimensional Hilbert space, we frequently abuse notation by making such identifications as $\textrm{id}^{\oplus \cH} = \textrm{id}^{\oplus \cH} \oplus \textrm{id}^{\oplus \cH}$, since $\cH \oplus \cH \cong \cH$.

\begin{theorem}\label{thm:pathcontraction}
Let $\Gamma$ be a discrete abelian group such that $C^*(\Gamma)$ satisfies following representation connectedness property: there exists an infinite-dimensional Hilbert space $\cH$ such that there is a continuous path within $\emph{Hom}(C^*(\Gamma), B(\cH) \otimes C^*(\Gamma))$ connecting $\emph{id}^{\oplus \cH} \oplus \emph{inv}^{\oplus \cH} \oplus \emph{triv}^{\oplus \cH}$ to $\emph{triv}^{\oplus \cH}$. Then there exists a Type 2 counterexample for the compact quantum group $C^*(\Gamma)$ coacting on $A := B(\cH) \otimes C^*(\Gamma)$ through the right tensorand.
\end{theorem}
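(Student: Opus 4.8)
The plan is to read off what an equivariant unital $*$-homomorphism $\Phi\colon H\to\join{A}{\delta}{H}$ must look like and reduce the whole statement to a homotopy question about representations. First I would record the setup: the coaction ``through the right tensorand'' is $\delta=\mathrm{id}_{B(\cH)}\otimes\Delta$, so the $\gamma$-isotypic subspace is $A_\gamma=B(\cH)\otimes\bC\gamma$, which contains the unitary $1\otimes\gamma$; hence $1\in\overline{A_\gamma A_\gamma^*}$, the saturation condition \Cref{eq:saturation} holds, and $\delta$ is free, so $\join{A}{\delta}{H}$ and the induced $\delta_\Delta$ are legitimately defined. A Type 2 counterexample is exactly an equivariant unital $*$-homomorphism $\Phi\colon H\to\join{A}{\delta}{H}$, so that is what I aim to build.

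Next I would reduce the existence of $\Phi$ to a concrete path problem. Since $H=C^*(\Gamma)$ is generated by the group unitaries, $\Phi$ is determined by the unitaries $\Phi(\gamma)$, and equivariance forces each $\Phi(\gamma)$ into the $\gamma$-isotypic subspace of the join. Unwinding the induced coaction $\delta_\Delta f(t)=(\mathrm{id}_A\otimes\Delta)(f(t))$, that subspace consists precisely of paths $t\mapsto \pi_t(\gamma)\otimes\gamma$ with $\pi_t(\gamma)\in A$, and the multiplicativity of $\Phi$ forces $\gamma\mapsto\pi_t(\gamma)$ to be a unital $*$-representation $C^*(\Gamma)\to A$ for each $t$, continuous in $t$. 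The two boundary conditions then translate cleanly: $f(0)\in\delta(A)$ forces $\pi_0=(\sigma\otimes\mathrm{id})\circ\Delta$ (i.e. $\pi_0(\gamma)=\sigma(\gamma)\otimes\gamma$) for some representation $\sigma\colon C^*(\Gamma)\to B(\cH)$, while $f(1)\in\bC\otimes H$ forces $\pi_1$ to be a character. Thus the theorem is equivalent to producing a norm-continuous path of unital $*$-representations $(\pi_t)$ in $\mathrm{Hom}(C^*(\Gamma),A)$ running from a ``comultiplication-diagonal'' representation $(\sigma\otimes\mathrm{id})\circ\Delta$ to a character; conversely any such path yields $\Phi$ via $\Phi(\gamma)(t):=\pi_t(\gamma)\otimes\gamma$, and the $*$-homomorphism, equivariance, boundary, and continuity checks are then routine.

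Finally I would assemble the path from the hypothesis, which supplies exactly the difficult, relations-preserving contractibility: a path in $\mathrm{Hom}(C^*(\Gamma),A)$ from $\mathrm{id}^{\oplus\cH}\oplus\mathrm{inv}^{\oplus\cH}\oplus\mathrm{triv}^{\oplus\cH}$ to the trivial character $\mathrm{triv}^{\oplus\cH}$. The intended $\pi_t$ is a concatenation: an opening segment from $\pi_0=(\sigma\otimes\mathrm{id})\circ\Delta$ (for a well-chosen $\sigma$, absorbing multiplicities via $\cH\oplus\cH\cong\cH$) to $\mathrm{id}^{\oplus\cH}\oplus\mathrm{inv}^{\oplus\cH}\oplus\mathrm{triv}^{\oplus\cH}$, followed by the hypothesis path to the character. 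A useful device here is that $\gamma\mapsto 1\otimes\gamma$ (the representation $\mathrm{id}^{\oplus\cH}$) is central, because $\Gamma$ is abelian; multiplying a representation by it is again a representation and converts a trivial-character endpoint into the $\delta(A)$-compatible diagonal form $(\mathrm{triv}\otimes\mathrm{id})\circ\Delta=\mathrm{id}^{\oplus\cH}$, which is how one matches the $t=0$ boundary. The hard part will be the opening segment: reconciling the comultiplication-diagonal representation dictated by the $\delta(A)$-boundary with the direct-sum representation at which the contraction is available, while staying inside genuine $*$-representations at every instant. This is the genuinely delicate point, since the naive unitary rotations that would connect these two representations destroy the homomorphism property; making it work is precisely where the stabilized form of the connectedness hypothesis---the extra $\mathrm{triv}^{\oplus\cH}$ summand together with the infinite-multiplicity absorption---must be exploited.
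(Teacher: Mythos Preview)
Your reduction is correct and matches the paper's implicit framework: an equivariant $\Phi:H\to\join{A}{\delta}{H}$ is exactly a continuous path $(\pi_t)$ in $\mathrm{Hom}(C^*(\Gamma),A)$ from some $\gamma\mapsto\sigma(\gamma)\otimes\gamma$ to a character, and you have spotted the central-element multiplication device $\psi_t(\gamma):=(1\otimes\gamma)\phi_t(\gamma)$, which is the key tool in the paper as well.

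The gap is that you stop precisely where the content is. Your ``opening segment'' --- connecting a $\delta(A)$-compatible $\pi_0=(\sigma\otimes\mathrm{id})\circ\Delta$ to the hypothesis endpoint $\mathrm{id}^{\oplus\cH}\oplus\mathrm{inv}^{\oplus\cH}\oplus\mathrm{triv}^{\oplus\cH}$ --- is not something you can hand-wave; that endpoint sends $\gamma$ to an element with components in \emph{three} different isotypic subspaces ($\gamma$, $\gamma^{-1}$, $e$), so it is never of the form $\sigma(\gamma)\otimes\gamma$, and no amount of unitary rotation alone will fix this. The paper does not try to build such an opening segment at all. Instead it takes $\sigma=\mathrm{triv}$, so that $\pi_0=\mathrm{id}^{\oplus\cH}$ is already $\delta(A)$-compatible, and manufactures a path from $\mathrm{id}^{\oplus\cH}$ to $\mathrm{triv}^{\oplus\cH}$ by a specific five-piece concatenation that uses the hypothesis path $\phi$ \emph{twice} and its twist $\psi$ \emph{twice}: run $\psi$ backward to reach $\mathrm{squ}^{\oplus\cH}\oplus\mathrm{triv}^{\oplus\cH}\oplus\mathrm{id}^{\oplus\cH}$; run $\phi$ backward on the middle $\mathrm{triv}^{\oplus\cH}$ summand; reorganize by a path of unitary conjugations (here the infinite multiplicity is used to split and merge summands, e.g.\ $\mathrm{id}^{\oplus\cH}\oplus\mathrm{id}^{\oplus\cH}=\mathrm{id}^{\oplus\cH}$ and $\mathrm{triv}^{\oplus\cH}=\mathrm{triv}^{\oplus\cH}\oplus\mathrm{triv}^{\oplus\cH}$) to obtain $(\mathrm{squ}^{\oplus\cH}\oplus\mathrm{triv}^{\oplus\cH}\oplus\mathrm{id}^{\oplus\cH})\oplus\mathrm{triv}^{\oplus\cH}\oplus\mathrm{inv}^{\oplus\cH}$; run $\psi$ forward on the first block to get $\mathrm{id}^{\oplus\cH}\oplus\mathrm{triv}^{\oplus\cH}\oplus\mathrm{inv}^{\oplus\cH}$; and finally run $\phi$ forward. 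Without this explicit chain (or something equivalent), your proposal remains a plan rather than a proof.
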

\begin{proof}
We note that because $C^*(\Gamma)$ is commutative, any element of $\bC \otimes C^*(\Gamma)$ is in the center of $B(\cH) \otimes C^*(\Gamma)$. As such, let $\phi_t: C^*(\Gamma) \to B(\cH) \otimes C^*(\Gamma)$ be a path of morphisms connecting $\phi_0 = \textrm{id}^{\oplus \cH} \oplus \textrm{inv}^{\oplus \cH} \oplus \textrm{triv}^{\oplus \cH}$ to $\phi_1 = \textrm{triv}^{\oplus \cH}$, and define
\bes
\psi_t(\gamma) := (1 \otimes \gamma) \cdot \phi_t(\gamma) = \phi_t(\gamma) \cdot (1 \otimes \gamma).
\ees
It follows that $\psi_t$ defines a homomorphism from $\Gamma$ to the unitary group of $B(\cH) \otimes C^*(\Gamma)$, and hence also defines an element of $\textrm{Hom}(C^*(\Gamma), B(\cH) \otimes C^*(\Gamma))$, that varies continuously in $t$. The path connects $\psi_0 = \textrm{squ}^{\oplus \cH} \oplus \textrm{triv}^{\oplus \cH} \oplus \textrm{id}^{\oplus \cH}$ to $\psi_1 = \textrm{id}^{\oplus \cH}$.

We now show that we may use the above paths to connect the morphism $\textrm{id}^{\oplus \cH}$ to the morphism $\textrm{triv}^{\oplus \cH}$ within $\textrm{Hom}(C^*(\Gamma), B(\cH) \otimes C^*(\Gamma))$. Beginning with $\textrm{id}^{\oplus \cH}$, we may apply the path $\psi$ to connect $\textrm{id}^{\oplus \cH}$ to $\textrm{squ}^{\oplus \cH} \oplus \textrm{triv}^{\oplus \cH} \oplus \textrm{id}^{\oplus \cH}$. Next, we focus on the middle summand, so that we may apply $\phi$ to obtain a path connecting $\textrm{squ}^{\oplus \cH} \oplus \textrm{triv}^{\oplus \cH} \oplus \textrm{id}^{\oplus \cH}$ to $\textrm{squ}^{\oplus \cH} \oplus \left( \textrm{id}^{\oplus \cH} \oplus \textrm{inv}^{\oplus \cH} \oplus \textrm{triv}^{\oplus \cH} \right) \oplus \textrm{id}^{\oplus \cH}$. Reorganizing using a path of unitary conjugations leads to $\left(\textrm{squ}^{\oplus \cH} \oplus \textrm{triv}^{\oplus \cH} \oplus \textrm{id}^{\oplus \cH} \right) \oplus \textrm{triv}^{\oplus \cH} \oplus \textrm{inv}^{\oplus \cH}$, which may be connected using $\psi$ to $\textrm{id}^{\oplus \cH} \oplus \textrm{triv}^{\oplus \cH} \oplus \textrm{inv}^{\oplus \cH}$. Finally, we may apply $\phi$ once more to connect this morphism to $\textrm{triv}^{\oplus \cH}$.

A path connecting the morphisms $\textrm{id}^{\oplus \cH}$ and $\textrm{triv}^{\oplus \cH}$ within $\textrm{Hom}(C^*(\Gamma), B(\cH) \otimes C^*(\Gamma))$ shows that if $A = B(\cH) \otimes C^*(\Gamma)$ is given the right tensorand coaction of $C^*(\Gamma)$, then an equivariant map $C^*(\Gamma) \to A$ may be connected to a one-dimensional representation of $C^*(\Gamma)$. By the construction in \cite[Lemma 2.2]{alexbenjoin}, a Type 2 counterexample exists.
\end{proof}

Consider the case $\Gamma = \bZ$, generated by $\gamma$. It is possible to connect $\textrm{id} \oplus \textrm{inv}$ to $\textrm{triv}^{\oplus 2}$ without any stabilization, as in the $C^*$-algebra $M_2(\bC) \otimes C^*(\Gamma)$, the unitary $\gamma \oplus \gamma^{-1}$ may be connected to $I_2$ using an explicit continuous path $\phi_t$. Therefore, the path demanded in \Cref{thm:pathcontraction} may be written as an infinite direct sum built up from $\phi_t$ and trivial representations. Following the proof then gives a more explicit form of the counterexample in \cite[Theorem 2.3]{alexbenjoin}. 

Generally speaking, we expect that Type 2 counterexamples will need to use infinite-dimensional constructions. For example, consider the case $\Gamma = \bZ$ once again, generated by $\gamma$. It is impossible to connect $\textrm{id}^{\oplus n}$ to $\textrm{triv}^{\oplus n}$ within $\textrm{Hom}(C^*(\Gamma), M_n(\bC) \otimes C^*(\Gamma))$. In particular, applying the determinant shows that such a path would place $\gamma^n$ in the same path component as $1$ in the unitary group of $C^*(\Gamma) \cong C(\bS^1)$, contrary to elementary facts about the fundamental group of $\bS^1$. However, when $M_n(\bC)$ is replaced by $B(\cH)$, this obstruction vanishes. Note also that the counterexamples constructed for Type 2 so far have all been of the form $A = B \otimes C(G)$ for some $C^*$-algebra $B$, where $C(G)$ coacts on $A$ through the right tensorand only.

Types 1 and 2 of \Cref{conj:type12} overlap along the question \lq\lq does there exist an equivariant map $\phi: H \to \join{H}{\Delta}{H}$?\rq\rq\hspace{0pt} When $H = C(G)$ is classical, it is known that $\phi$ cannot exist because $G$ is not contractible. When $H$ has a counit, \cite[Lemma 2.4]{BDHrevisited} implies that existence of $\phi$ is equivalent to the claim that the identity map on $H$ may be connected within the endomorphisms of $H$ to a character. Thus, when $H$ does not necessarily have a counit, one may view existence of $\phi$ as a noncommutative analogue of the claim that $H$ is contractible (or more precisely, that the quantum group underlying the Hopf $C^*$-algebra $H$ is contractible). This idea is implicit as far back as \cite{BDH}.

With this perspective, it is natural to seek obstructions to contractibility using well-established invariants, as in the following result. For background on the Baum-Connes conjecture we refer the reader to the introductory paper \cite{val} as well as \cite{bc}, where the conjecture originates.

\begin{proposition}\label{pr.bc}
  Let $(H,\Delta)$ be the reduced $C^*$-algebra $C^*_r(\Gamma)$ of a discrete group that satisfies the Baum-Connes conjecture and which has non-trivial rational homology. Then there is no equivariant morphism $\phi: H \to \join{H}{\Delta}{H}$.
\end{proposition}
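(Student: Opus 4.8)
The plan is to convert a hypothetical equivariant morphism $\phi$ into a homotopy of endomorphisms of $H$ and then obstruct that homotopy with $K$-theory. First I would unwind the structure of an equivariant unital $*$-homomorphism $\phi\colon H\to\join{H}{\Delta}{H}$. Evaluating at each $t\in[0,1]$ gives unital $*$-homomorphisms $\theta_t:=\mathrm{ev}_t\circ\phi\colon H\to H\otimes H$, and applying $\mathrm{ev}_t\otimes\mathrm{id}$ to the equivariance relation $\delta_\Delta\circ\phi=(\phi\otimes\mathrm{id})\circ\Delta$ yields $(\mathrm{id}_H\otimes\Delta)\theta_t=(\theta_t\otimes\mathrm{id}_H)\Delta$ for every $t$. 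For $H=C^*_r(\Gamma)$ one checks on the dense $*$-subalgebra $\bC\Gamma$ that such a $\theta_t$ must have the form $\theta_t=(\Theta_t\otimes\mathrm{id}_H)\circ\Delta$ for a unique unital $*$-endomorphism $\Theta_t\colon H\to H$: writing $\theta_t(\gamma)=\sum_i a_i\otimes b_i$, the relation forces $\Delta(b_i)=b_i\otimes\gamma$, so each $b_i$ lies in the $\gamma$-isotypic line $\bC\gamma$ and hence $\theta_t(\gamma)=\Theta_t(\gamma)\otimes\gamma$. Extracting $\Theta_t(\gamma)$ by applying a bounded functional $\omega$ with $\omega(\gamma)=1$ (for instance $\omega=\tau(\gamma^{*}\,\cdot\,)$, $\tau$ the canonical trace) shows that $t\mapsto\Theta_t$ is point-norm continuous. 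Thus $\phi$ yields a norm-continuous path $(\Theta_t)_{t\in[0,1]}$ of unital $*$-endomorphisms of $H$.

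Next I would read off the boundary conditions. At $t=1$ the constraint $\theta_1(H)\subseteq\bC\otimes H$ forces $\Theta_1(\gamma)\in\bC$, so $\Theta_1$ is a character, factoring as $H\to\bC\hookrightarrow H$. At $t=0$ the constraint $\theta_0(H)\subseteq\Delta(H)$ forces $\Theta_0(\gamma)=c_\gamma\gamma$ for a group homomorphism $c\colon\Gamma\to\bT$, and such a twist is a $*$-automorphism of $H$. Homotopy invariance of $K$-theory then gives $(\Theta_0)_\ast=(\Theta_1)_\ast$ on $K_\ast(H)$. Since $\Theta_0$ is an automorphism, $(\Theta_0)_\ast$ is an isomorphism; since $\Theta_1$ factors through $\bC$, $(\Theta_1)_\ast$ factors through $K_\ast(\bC)$. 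On $K_1$ this forces $K_1(H)=0$ (because $K_1(\bC)=0$), and on $K_0$ it forces $K_0(H)=\bZ\,[1_H]$, of rational rank $1$. Consequently $K_\ast(C^*_r(\Gamma))\otimes\bQ$ is concentrated in even degree and is one-dimensional there.

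Finally I would contradict this using the Baum-Connes hypothesis. Under Baum-Connes the assembly map $K^\Gamma_\ast(\underline{E}\Gamma)\to K_\ast(C^*_r(\Gamma))$ is an isomorphism, and the comparison map $E\Gamma\to\underline{E}\Gamma$ together with the Chern character produces a graded injection $\bigoplus_n H_{\ast+2n}(\Gamma;\bQ)\hookrightarrow K_\ast(C^*_r(\Gamma))\otimes\bQ$, with even-degree homology landing in $K_0\otimes\bQ$ and odd-degree homology in $K_1\otimes\bQ$. If $\Gamma$ has nontrivial rational homology in some positive degree, then either $H_{\mathrm{odd}}(\Gamma;\bQ)\neq0$, forcing $K_1\otimes\bQ\neq0$, or $H_{2k}(\Gamma;\bQ)\neq0$ for some $k\geq1$, which together with $H_0=\bQ$ forces $\dim_\bQ K_0\otimes\bQ\geq2$. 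Either alternative contradicts the previous paragraph, so no such $\phi$ exists.

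I expect the main obstacle to be this last step: correctly invoking, in the presence of torsion, the rational injectivity of group homology into $K_\ast(C^*_r(\Gamma))$ under Baum-Connes, i.e. identifying $\bigoplus_n H_{\ast+2n}(\Gamma;\bQ)$ as the trivial-conjugacy-class summand of $K^\Gamma_\ast(\underline{E}\Gamma)\otimes\bQ$ via the delocalized Chern character. The structural Step 1 is routine comodule bookkeeping, and Step 2 is standard homotopy invariance of $K$-theory. I would also record the convention that \emph{nontrivial rational homology} means nontrivial in a positive degree, the degree-zero part $H_0=\bQ$ being always present and accounting for the class $[1_H]$ of the unit.
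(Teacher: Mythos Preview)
Your argument contains a genuine gap in the first step: the passage from the bounded maps $\theta_t\colon H\to H\otimes H$ to bounded unital $*$-endomorphisms $\Theta_t\colon H\to H$ is not justified. Concretely, the formula $\theta_t(\gamma)=\Theta_t(\gamma)\otimes\gamma$ only defines $\Theta_t$ as a group homomorphism $\Gamma\to U(H)$, and the slicing you suggest, $\Theta_t(\gamma)=(\mathrm{id}\otimes\tau(\gamma^*\,\cdot\,))\theta_t(\gamma)$, uses a $\gamma$-dependent functional and hence does not assemble into a single bounded linear map on $H$. In fact your $\Theta_t$ is exactly $(\mathrm{id}\otimes\varepsilon)\circ\theta_t$, and the counit $\varepsilon$ is bounded on $C^*_r(\Gamma)$ if and only if $\Gamma$ is amenable. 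For a non-amenable $\Gamma$ (e.g.\ the free groups covered by the remark following the proposition) the extracted $\Theta_t$ need not be bounded for intermediate $t$, and $\Theta_1$ need not factor as a bounded map $H\to\bC\hookrightarrow H$; Fell absorption shows that the boundedness of $\theta_t=\Theta_t\otimes\lambda$ imposes no constraint on $\Theta_t$ alone. Without a homotopy of bounded $*$-homomorphisms $H\to H$, you cannot invoke homotopy invariance to conclude $(\Theta_0)_*=(\Theta_1)_*$, and your deduction that $K_*(H)\otimes\bQ$ is one-dimensional collapses.

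The paper circumvents precisely this issue by never leaving $H^{\otimes 2}$: it keeps the bounded homotopy $\theta_t\colon H\to H^{\otimes 2}$ and compares $(\theta_0)_*$ with $(\theta_1)_*$ on $K_*(H^{\otimes 2})$, which up to character-twist automorphisms amounts to comparing $\Delta_*$ with the inclusion $h\mapsto 1\otimes h$. It then first disposes of the torsion case by an external result and, in the torsion-free case, transports this comparison via Baum--Connes and the Chern character to the rational homology $H_*(\Gamma,\bQ)$, where a genuine counit exists regardless of amenability, and concludes that the coalgebra $H_*(\Gamma,\bQ)$ must be trivial. Your final $K$-theoretic contradiction is in the same spirit, but to reach it for non-amenable $\Gamma$ you would have to rework the middle of the argument along these lines rather than through endomorphisms of $H$.
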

\begin{proof}
  \cite[Theorem 2.7]{alexbenjoin} proves that \Cref{conj:type12} Type 2 holds when $\Gamma$ has torsion, so we may as well assume $\Gamma$ is torsion-free.

  Existence of $\phi$ implies that we can find a path connecting an endomorphism
  \begin{equation*}
   \psi_0:C_r^*(\Gamma)\to C_r^*(\Gamma)\cong C_r^*(\Gamma)\otimes \bC\subset C_r^*(\Gamma)^{\otimes 2} 
  \end{equation*}
 which respects the $\Gamma$-grading to one of the form
  \begin{equation*}
    \psi_1:C_r^*(\Gamma)\to \mathrm{Im}(\Delta)\subset C_r^*(\Gamma)^{\otimes 2}. 
  \end{equation*}
  Restricting attention to the group $\Gamma$ inside $C_r^*(\Gamma)$, the maps given take the form
  \begin{equation*}
    \Gamma\ni \gamma\mapsto \varphi_0(\gamma)\gamma\in C_r^*(\Gamma)
  \end{equation*}
  and
  \begin{equation*}
    \Gamma\ni \gamma\mapsto \varphi_1(\gamma)\gamma\otimes\gamma\in \mathrm{Im}(\Delta)
  \end{equation*}
for characters $\varphi_i:\Gamma\to \bS^1$. This, in turn, implies that the maps induced by $\psi_i$ (and denoted by the same symbols here) coincide on $K$-theory. 

In the torsion-free case, the Baum-Connes conjecture implements an isomorphism
\begin{equation*}
  \mu:K_*(B\Gamma)\to K_*(C_r^*(\Gamma))
\end{equation*}
between $K_*(C_r^*(\Gamma))$ and the $K$-homology $K_*(B\Gamma)$ of the classifying space of $\Gamma$ (see e.g. \cite[Introduction]{val}). Extending coefficients to the field $\bQ$ turns
\begin{equation*}
 K_*(B\Gamma)_{\bQ}:=K_*(B\Gamma)\otimes_{\bZ}\bQ 
\end{equation*}
into a rational coalgebra, co-augmented in the sense that it is a rational coalgebra with a distinguished group-like element $g$ coming from the inclusion $1\to \Gamma$ (see \cite[$\S$4]{brd}). 

In turn, for $i=0,1$ we have
\begin{equation*}
  ch:K_i(B\Gamma)\otimes_{\mathbb{Z}}\mathbb{Q} \cong \bigoplus_{n} H_{i+2n}(\Gamma,\mathbb{Q})
\end{equation*}
by \cite[Remark 4.2.13 (2)]{val}. (The result is stated there over the complex numbers, but there is no material distinction: the essence is that homology of the group is isomorphic to $K$-homology of the classifying space up to torsion. See also \cite[Lemma A.4.2]{val}). The isomorphism is implemented by the {\it Chern character}, which is functorial in $\Gamma$; this follows for instance from its description in \cite[$\S$4.2]{jak}. Therefore, $ch$ respects the diagonal map $\Delta:\Gamma\to \Gamma^{\times 2}$ and the obvious maps $1\to \Gamma\to 1$.

All together, $ch$ is an isomorphism between $K_*(B\Gamma)_{\bQ}$ and the rational homology $H_*(\Gamma,\bQ)$ of $\Gamma$, regarded as a (graded) rational coalgebra co-augmented by the grouplike element
\begin{equation*}
  g=1\in\bQ\cong H_0(\Gamma,\bQ). 
\end{equation*}
Finally, the existence of $\phi$ now implies that up to the automorphisms induced by $\varphi_i$, $i \in \{0,1\}$, the maps
\begin{equation*}
  H_*(\Gamma,\bQ)\ni x\mapsto x\otimes 1\in H_*(\Gamma,\bQ)^{\otimes 2}
\end{equation*}
and 
\begin{equation*}
  H_*(\Gamma,\bQ)\ni x\mapsto \Delta(x)\in H_*(\Gamma,\bQ)^{\otimes 2}
\end{equation*}
are equal. Applying the counit $\varepsilon:H_*(\Gamma,\bQ)\to \bQ$ to the left tensorand in $H_*(\Gamma,\bQ)$, we conclude that $H_*(\Gamma,\bQ)$ is trivial (more precisely, $\bQ\cong H_*(\Gamma,\bQ)$), contradicting our assumption to the contrary.
\end{proof}
\begin{remark}
  The conclusion of \Cref{pr.bc} is proven for free groups in \cite[Theorem 3.8]{BDHrevisited}. Since free groups satisfy the Baum-Connes conjecture (e.g. because they are one-relator groups \cite{bbv}) and have non-zero first rational homology, \Cref{pr.bc} generalizes that result.
\end{remark}

Restricting attention to amenable groups $\Gamma$, \Cref{pr.bc} proves non-existence of equivariant maps 
\begin{equation*}
  C^*(\Gamma) \to \join{C^*(\Gamma)}{\Delta}{C^*(\Gamma)}
\end{equation*}
provided $H_*(\Gamma,\bQ)\ne \bQ$. We now consider a different generalization of this idea based on the existence of characters.

\begin{proposition}\label{cor.bc}
  Let $(H,\Delta)$ be a compact quantum group such that $H$ admits a character $H\to \bC$. If $K_*(H)\ne \bZ$, then there is no equivariant morphism from $H$ to $\join{H}{\Delta}{H}$.
\end{proposition}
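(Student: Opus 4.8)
The plan is to assume an equivariant morphism $\phi\colon H\to\join{H}{\Delta}{H}$ exists and deduce $K_*(H)\cong\bZ$, contrary to hypothesis. The entry point is the equivalence recorded just before the statement: with the character $H\to\bC$ playing the role of a counit, \cite[Lemma 2.4]{BDHrevisited} converts the existence of $\phi$ into the existence of a path, within the unital $*$-endomorphisms of $H$ (taken with the pointwise-norm topology), connecting $\id_H$ to a character $\psi$. Here $\psi$ factors as $\psi=u\circ\chi'$, where $\chi'\colon H\to\bC$ is a one-dimensional representation and $u\colon\bC\to H$ is the unit inclusion. Concretely, such a path is produced by slicing: for $f$ in the join (a path $[0,1]\to H\otimes H$) one forms $(\id\otimes\chi)\circ\mathrm{ev}_t\circ\phi$, which is a unital $*$-endomorphism of $H$ because $\chi$ is a character, and whose endpoints are read off from the boundary conditions $f(0)\in\Delta(H)$ and $f(1)\in\bC\otimes H$ of $\join{H}{\Delta}{H}$.

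Granting this homotopy, I would invoke homotopy invariance of $K$-theory: a pointwise-norm-continuous path of unital $*$-homomorphisms induces a homotopy of the associated maps on projections and unitaries, so the two endpoints act identically on $K_*(H)$. Since $\id_H$ induces the identity, this yields
\[
\id_{K_*(H)}=\psi_*=u_*\circ\chi'_*\colon K_*(H)\xrightarrow{\chi'_*}K_*(\bC)\xrightarrow{u_*}K_*(H),
\]
so that $\id_{K_*(H)}$ factors through $K_*(\bC)=K_0(\bC)\oplus K_1(\bC)=\bZ\oplus 0$.

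It then remains to extract the contradiction from this factorization. The relation $u_*\circ\chi'_*=\id$ exhibits $\chi'_*$ as a split monomorphism, hence an injection $K_*(H)\hookrightarrow\bZ\oplus 0$. Reading off the two graded summands gives $K_1(H)=0$ together with an embedding $K_0(H)\hookrightarrow\bZ$. Since $\chi'$ is unital, $\chi'_*[1_H]=[1_\bC]=1\neq 0$ in $K_0(\bC)=\bZ$; thus $[1_H]\neq 0$ and $K_0(H)$ is a nonzero subgroup of $\bZ$, forcing $K_0(H)\cong\bZ$. Therefore $K_*(H)\cong\bZ$, contradicting $K_*(H)\neq\bZ$, and no equivariant $\phi$ can exist.

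I expect the main obstacle to be the first step, namely confirming that the bare character hypothesis genuinely licenses the reduction of \cite[Lemma 2.4]{BDHrevisited}. One must check that slicing by $\chi$ collapses the two boundary conditions of the join correctly, so that one endpoint is $\id_H$ (up to a translation automorphism $(\id\otimes\chi)\Delta$, which is harmless on $K$-theory since it is invertible and one may rewrite $\psi_*$ as an isomorphism precomposed with $\chi'_*$) and the other is a character; and that the resulting path consists of honest unital $*$-homomorphisms rather than merely completely positive maps, so that $K$-theoretic homotopy invariance truly applies. Everything afterward is the routine observation that an identity endomorphism factoring through $K_*(\bC)=\bZ$ pins $K_*(H)$ down to $\bZ$.
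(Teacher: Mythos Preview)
Your overall strategy matches the paper's: slice the hypothetical equivariant $\phi$ by a character on the right tensorand to obtain a homotopy of unital $*$-endomorphisms of $H$, then use homotopy invariance of $K$-theory to factor $\id_{K_*(H)}$ through $K_*(\bC)\cong\bZ$. The $K$-theoretic conclusion you draw from that factorization is fine.

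The gap is exactly where you flag it, and it is not merely a matter of checking boundary conditions. Two things need justification at $t=0$. First, $\phi_0$ lands in $\Delta(H)$ but is not $\Delta$ itself; writing $\phi_0=\Delta\circ\alpha$ with $\alpha=\Delta^{-1}\circ\phi_0$, your $t=0$ endpoint is $(\id\otimes\chi)\circ\Delta\circ\alpha$, not just the translation $(\id\otimes\chi)\circ\Delta$. Second, and more seriously, you assert that translations such as $(\id\otimes\chi)\circ\Delta$ are automorphisms, but a priori the characters of $H$ form only a \emph{semigroup} under convolution, so neither that translation nor the equivariant self-map $\alpha$ is obviously invertible. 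The paper handles both points by first proving that the existence of \emph{any} character forces the counit $\varepsilon$ to be bounded: the character semigroup embeds as a compact sub-semigroup of the compact group of characters of the universal version $H^u$, and compact sub-semigroups of compact groups are groups (cf.\ \cite[Lemma B.1]{rs}). Once $\varepsilon$ is available, every equivariant self-map of $H$ is $(\chi'\otimes\id)\circ\Delta$ for the character $\chi'=\varepsilon\circ\alpha$, hence a $C^*$-automorphism, and slicing by $\varepsilon$ (rather than the original $\chi$) gives a clean homotopy from a genuine automorphism to a character. Your sketch becomes a proof once you insert this step.
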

\begin{proof}
  First, the existence of a character $H\to \bC$ as in the hypothesis entails the boundedness of the counit $\varepsilon:H\to \bC$.

  To see this, consider the convolution semigroup $S$ (assumed non-empty) of characters of $H$. $S$ embeds into the compact group $G$ of all characters of the universal version $H^u$ of $H$. Since compact sub-semigroups of compact groups are automatically subgroups (e.g. by \cite[Lemma B.1]{rs}), this proves the claim.
  
  Now let $\phi:H\to \join{H}{\Delta}{H}$ be an equivariant map. Its initial component $\phi_0$ is an equivariant map from $H$ to
\begin{equation*}
  \begin{tikzpicture}[auto,baseline=(current  bounding  box.center)]
    \path[anchor=base] (0,0) node (l) {$\Delta(H)$} +(3,0) node (r) {$H$.};
         \draw[->] (l) to[bend left=0] node[pos=.5,auto] {$\scriptstyle \Delta^{-1}$}  (r);
  \end{tikzpicture}
\end{equation*}
Any equivariant self-map $\alpha$ of $H$ is of the form $(\chi\otimes\id)\circ\Delta$ for some character $\chi:H\to \bC$, as we may set $\chi=\varepsilon\circ\alpha$. Having made the identification
\begin{equation*}
  \Delta^{-1}\circ \phi_0 = (\chi\otimes\id)\circ\Delta, 
\end{equation*}
it follows that
\begin{equation*}
  \phi_0 = (\chi\otimes\id\otimes\id)\circ \Delta^{(2)},
\end{equation*}
where
\begin{equation*}
  \Delta^{(2)} = (\Delta\otimes\id)\circ\Delta. 
\end{equation*}
But then it follows that $(\id\otimes\varepsilon)\circ\phi_0 = (\chi\otimes\id)\circ\Delta$. In particular, it is a $C^*$-isomorphism.

Composing $\phi$ with $\id\otimes\varepsilon$ thus produces a homotopy from a $C^*$-automorphism of $H$ to the character
\begin{equation*}
  (\id\otimes\varepsilon)\circ \phi_1: H\to \bC\subset H. 
\end{equation*}
This in turn gives an automorphism of $K_*(H)$ that factors through $K_*(\bC)\cong \bZ$, finishing the proof.
\end{proof}

\begin{remark}
  \Cref{cor.bc} should be compared with \cite[Theorem 3.3]{BDHrevisited}. The latter proves a stronger conclusion using a stronger hypothesis.
\end{remark}



We can dispense with any K-theoretic assumptions in \Cref{cor.bc} provided we assume the character in question is not the counit (compare with \cite[Theorem 2.2 and Corollary 2.7]{BDHrevisited} in the case $A = H$).

\begin{proposition}
  If $(H,\Delta)$ admits a non-trivial character, then there is no equivariant morphism from $H$ to $\join{H}{\Delta}{H}$.
\end{proposition}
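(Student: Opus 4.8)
The plan is to run the proof of \Cref{cor.bc} up to the point where $K$-theory was invoked, and then replace the $K$-theoretic obstruction by a purely topological one living on the character group of $H$ itself. The key observation is that the single homotopy produced from $\phi$ works uniformly against \emph{all} characters at once, and this uniformity upgrades ``$\mu$ lies in the identity component'' to the much stronger ``the whole character group is contractible.''

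First, since $H$ admits a character, the opening argument in the proof of \Cref{cor.bc} applies unchanged: the counit $\varepsilon$ is a genuine (bounded) character, and the set $S$ of characters of $H$ is a \emph{compact group} under convolution. Suppose, toward a contradiction, that an equivariant $\phi\colon H\to\join{H}{\Delta}{H}$ exists. Writing $\Psi_t := (\id\otimes\varepsilon)\circ\Phi_t$, where $\Phi_t(h):=\phi(h)(t)$ is the value of $\phi(h)$ at the point $t\in[0,1]$, I would reproduce exactly the computation of \Cref{cor.bc}: this is a norm-continuous path of unital $*$-endomorphisms of $H$ with $\Psi_0=(\chi\otimes\id)\circ\Delta$ a $C^*$-automorphism (forced by the boundary condition $\phi_0\in\Delta(H)$, via $\chi=\varepsilon\circ\Delta^{-1}\circ\phi_0\in S$) and with $\Psi_1=\rho$ a character valued in $\bC\cdot 1$ (forced by the boundary condition $\phi_1\in\bC\otimes H$).

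The new step is to post-compose the entire path with an arbitrary character. For $\sigma\in S$ set $Q(t,\sigma):=\sigma\circ\Psi_t\in S$. A one-line Sweedler computation gives $Q(0,\sigma)=\sigma\circ(\chi\otimes\id)\circ\Delta=\chi*\sigma$, while $Q(1,\sigma)=\sigma\circ\rho=\lambda$, where $\lambda:=\varepsilon\circ\rho$ is \emph{independent of $\sigma$} precisely because $\Psi_1$ takes values in the scalars $\bC\cdot1$ (this is exactly where the $\bC\otimes H$ end of the join collapses the dependence on $\sigma$). Thus $Q(0,-)$ is left translation $L_\chi$ by $\chi$, a self-homeomorphism of the compact group $S$, whereas $Q(1,-)$ is constant. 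Joint continuity of $Q$ — from norm-continuity of $t\mapsto\Psi_t$ together with the pointwise topology on $S$ — then exhibits $L_\chi$ as null-homotopic; composing with $L_{\chi^{-1}}$ shows that $\id_S$ is homotopic to a constant map, i.e.\ $S$ is contractible. Finally, the non-trivial character guarantees that $S$ contains both $\varepsilon$ and a second, distinct point, so $S$ is a \emph{nontrivial} compact group, and a nontrivial compact group is never contractible.

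I expect the main obstacle to be the last fact, that a nontrivial compact group cannot be contractible; this is where the argument genuinely uses that $\mu\neq\varepsilon$ rather than that $\mu$ merely lies outside the identity component. I would justify it by reducing via Peter--Weyl to the statement that a nontrivial compact \emph{connected} Lie group is not rationally acyclic (its rational cohomology is an exterior algebra with at least one positive-degree generator), passing to the profinite/connected inverse limit through the continuity of \v{C}ech cohomology. A secondary point requiring care is verifying the two endpoint identities for $Q$ together with the joint continuity claim, and recording explicitly that $\chi\in S$ is invertible so that $L_\chi$ is indeed a homeomorphism; as a consistency check, specializing to $H=C(G)$ identifies $S$ with $G$ and recovers exactly the classical obstruction that $G$ is not contractible.
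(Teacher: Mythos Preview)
Your proposal is correct and takes essentially the same approach as the paper. The paper phrases the reduction as ``pass to the abelianization of $H$, which is $C(G)$ for a nontrivial compact group $G$, and observe that the path of endomorphisms descends to exhibit $G$ as contractible''; you instead post-compose the same path $\Psi_t$ with arbitrary characters to build the contracting homotopy $Q$ on the character group $S$ directly. Since the character space of $H$ is precisely the compact group underlying the abelianization and your $Q$ is the Gelfand dual of the descended path, the two arguments coincide up to packaging, and both finish by invoking non-contractibility of nontrivial compact groups (the paper cites \cite{hof} for this, while you sketch a Peter--Weyl argument).
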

\begin{proof}
  As in the proof of \Cref{cor.bc}, such a map would entail the existence of a path connecting an isomorphism $H\to H$ to a character. This, in turn, amounts to an analogous path for the abelianization of $H$, which is the function algebra of a classical non-trivial compact group. A contradiction now follows from the fact that non-trivial compact groups are not contractible \cite{hof}.
\end{proof}

The following proposition implies that characters in join and fusion procedures (see \cite{joinfusion}) are easily determined by the characters of the components.

\begin{proposition}\label{prop:characterjoin}
Let $B$ be a unital $C^*$-algebra, and let $X$ be a compact Hausdorff space. Fix finitely many $s_i \in X$, $1 \leq i \leq N$. For each $i$, let $B_{s_i}$ denote a unital $C^*$-subalgebra of $B$, and define $A$ as
\bes
A := \{f \in C(X, B): \text{for each } i \in \{1, \ldots, N\}, f(s_i) \in B_{s_i}\}.
\ees
Also, define $B_s := B$ for $s \in X \setminus \{s_1, \ldots, s_N\}$. Then if $\phi$ is a character on $A$, it may be decomposed as $\phi = \psi \circ \textrm{ev}_s$ for some $s \in X$ and character $\psi$ on $B_s$.
\end{proposition}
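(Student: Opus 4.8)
The plan is to locate the point $s$ by restricting $\phi$ to a central copy of $C(X)$, and then to show that $\phi$ sees only the value $f(s)$. First I would note that $A$ is a unital $C^*$-algebra (a closed unital subalgebra of $C(X,B)$) and that $g \mapsto g \cdot 1_B$, sending $g \in C(X)$ to the function $x \mapsto g(x) 1_B$, embeds $C(X)$ as a \emph{central} subalgebra of $A$: indeed $g \cdot 1_B \in A$ because $g(x) 1_B \in \bC 1_B \subseteq B_{s_i}$ for each $i$, and it is central because scalar multiples of $1_B$ commute with all of $B$. Restricting the character $\phi$ to this copy of $C(X)$ yields a character of $C(X)$, necessarily evaluation at some point $s \in X$, so that $\phi(g \cdot 1_B) = g(s)$ for all $g \in C(X)$. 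This $s$ is the point claimed in the statement.

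Next I would check that $\textrm{ev}_s : A \to B_s$ is a surjective $*$-homomorphism onto $B_s$. Given a target value $b$ (arbitrary $b \in B$ if $s$ is none of the $s_i$, or $b \in B_{s_j}$ if $s = s_j$), normality of $X$ provides a continuous $h : X \to [0,1]$ with $h(s) = 1$ vanishing at every special point other than $s$; then $x \mapsto h(x) b$ lies in $A$ and evaluates to $b$ at $s$. Hence the image of $\textrm{ev}_s$ is exactly $B_s$.

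The crux is to show that $\phi$ annihilates $J := \ker(\textrm{ev}_s) = \{f \in A : f(s) = 0\}$, and this bump-function estimate is the main obstacle; everything else is formal. Fix $f \in J$ and $\varepsilon > 0$. By continuity choose an open neighborhood $U$ of $s$ on which $\|f(x)\| < \varepsilon$, and pick $g \in C(X)$ with $0 \le g \le 1$, $g(s) = 1$, and $\{g \neq 0\} \subseteq U$. Since $g \cdot 1_B$ is central, $(g \cdot 1_B) f$ is the function $x \mapsto g(x) f(x)$, whose norm is at most $\varepsilon$; and since $\phi$ is contractive while $\phi(g \cdot 1_B) = g(s) = 1$, we obtain $|\phi(f)| = |\phi\bigl((g \cdot 1_B) f\bigr)| \le \varepsilon$. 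Letting $\varepsilon \to 0$ gives $\phi(f) = 0$.

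Finally, with $\phi$ vanishing on $J$, the value $\phi(f)$ depends only on $f(s)$: if $f(s) = f'(s)$ then $f - f' \in J$, so $\phi(f) = \phi(f')$. Defining $\psi(b) := \phi(f)$ for any $f \in A$ with $f(s) = b$ therefore gives a well-defined map on the image $B_s$ of $\textrm{ev}_s$, and $\psi$ inherits from $\phi$ the structure of a unital $*$-homomorphism, i.e.\ a character on $B_s$. By construction $\phi = \psi \circ \textrm{ev}_s$, as required.
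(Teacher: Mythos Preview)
Your proof is correct and takes a genuinely different route from the paper's. The paper argues by contradiction and compactness: setting $I=\ker\phi$ and $J_t=\{f\in A:f(t)=0\}$, it supposes $J_t\not\subseteq I$ for every $t$, produces $f_t\in I$ with $f_t(t)=1$, covers $X$ by the sets where $f_t$ is invertible, extracts a finite subcover, and obtains a positive invertible element $\sum f_{t_k}f_{t_k}^*$ in the proper ideal $I$. Your approach is more direct: you exploit the central embedding $C(X)\hookrightarrow A$ to pin down $s$ immediately via Gelfand duality, and then a bump-function estimate (using centrality again to split $\phi((g\cdot 1_B)f)$ as a product) forces $\phi$ to vanish on $J_s$. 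The paper's argument is the classical ``every maximal ideal contains some fiber ideal'' template and would adapt to situations where no convenient central subalgebra is available; your argument is shorter and constructive, identifying $s$ at the outset rather than via an existence contradiction. Both proofs need the surjectivity of $\mathrm{ev}_s$ onto $B_s$, which the paper asserts more briefly than you do.
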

\begin{proof}
Let $J_t = \{f \in A: f(t) = 0\}$ and $I = \text{ker}(\phi)$, and note that since the $s_i$ are isolated, we have $A / J_t \cong B_t$ for each $t \in X$, implemented by evaluation at $t$. Therefore, it suffices to show that for some $t \in X$, $J_t \subseteq I$.

Just as in the commutative case, if $J_t \not\subseteq \text{ker}(\phi)$ for each $t \in X$, then $J_t + I = A$, since $I$ is of codimension one. Therefore, there exist $f_t \in I$ such that $f_t(t) = 1$. We may then cover $X$ by open sets
\bes
U_t := \{s \in X: f_t(s) \text{ is invertible in } B_s \subseteq B\}.
\ees
Any finite subcover $U_{t_1}, \ldots, U_{t_k}$ gives rise to a positive invertible function $\sum\limits_{i=1}^{k} f_{t_k}f_{t_k}^* \in I$, a contradiction of the fact that $I$ is proper.
\end{proof}

In particular, the equivariant join 
\bes
\join{A}{\delta}{H} = \{f \in C([0,1], A \otimes H): f(0) \in \delta(A), f(1) \in \bC \otimes H \}
\ees
is of the form considered in \Cref{prop:characterjoin}, so we may determine its characters. 

\begin{corollary}\label{cor:charactersiteratedjoinspaths}
Suppose there is a character $\phi$ on $\join{A}{\delta}{H}$, where $\delta$ is a free coaction of $H$ on $A$. Then the following hold.
\begin{itemize}
\item At least one of $A$ or $H$ admits a character.
\item If $H$ admits a character, then then there is a continuous path of characters on $\join{A}{\delta}{H}$ connecting $\phi$ to a character of the form $\rho \circ \text{ev}_1$, where $\rho$ is a character on $\bC \otimes H \cong H$.
\item If $A$ admits a character, then there is a continuous path of characters on $\join{A}{\delta}{H}$ connecting $\phi$ to a character of the form $\rho \circ \text{ev}_0$, where $\rho$ is a character on $\delta(A) \cong A$. 
\end{itemize}
\end{corollary}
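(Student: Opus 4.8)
The plan is to read off the characters of $\join{A}{\delta}{H}$ from \Cref{prop:characterjoin} and then connect them by sliding the evaluation point. Applying that proposition with $X=[0,1]$, $B=A\otimes H$, and special points $0,1$ carrying the subalgebras $B_0=\delta(A)$ and $B_1=\bC\otimes H$, every character $\phi$ factors as $\phi=\psi\circ\mathrm{ev}_s$ for some $s\in[0,1]$ and a character $\psi$ on $B_s$. When $s=0$ we have $B_0=\delta(A)\cong A$ (using injectivity of the free coaction $\delta$), so $\psi$ yields a character on $A$; when $s=1$ we have $B_1\cong H$, so $\psi$ yields a character on $H$; and when $s\in(0,1)$ we have $B_s=A\otimes H$, whose characters automatically factor as $\psi=\chi_A\otimes\chi_H$ (since $\psi(a\otimes h)=\psi(a\otimes1)\psi(1\otimes h)$), producing characters on both $A$ and $H$. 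This proves the first bullet.

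For the remaining two bullets I would set up the sliding mechanism once. Fix characters $\chi_A$ on $A$ and $\chi_H$ on $H$ and consider the family $s\mapsto(\chi_A\otimes\chi_H)\circ\mathrm{ev}_s$. For each fixed $f$ in the join, $(\chi_A\otimes\chi_H)(f(s))$ is continuous in $s$ because $f$ is norm-continuous and $\chi_A\otimes\chi_H$ is bounded, so this is a path of characters in the weak-$*$ topology on $(0,1)$, with one-sided limits governed by the boundary conditions. The key computation is the asymmetry of these limits: since $f(1)\in\bC\otimes H$, as $s\to1^-$ the family converges to $\chi_H\circ\mathrm{ev}_1$, independently of $\chi_A$; whereas since $f(0)\in\delta(A)$, as $s\to0^+$ it converges to $\bigl[(\chi_A\otimes\chi_H)\circ\delta\bigr]\circ\mathrm{ev}_0$, which involves both factors through the coaction.

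Granting this, the third bullet is the easy direction. If $\phi=\psi\circ\mathrm{ev}_s$ with $s=0$ it already has the required form; if $s\in(0,1)$, writing $\psi=\chi_A\otimes\chi_H$ and sliding $s\to0$ lands on a character of the form $\rho\circ\mathrm{ev}_0$; and if $s=1$, we use the hypothesis that $A$ admits a character to pick any $\chi_A$, observe that the interior family $(\chi_A\otimes\chi_H)\circ\mathrm{ev}_s$ with $\chi_H=\psi$ extends continuously to $\phi$ at $s=1$ precisely because the $s\to1$ limit ignores $\chi_A$, and then slide $s\to0$. Likewise, the second bullet is immediate when $s=1$ (constant path) or $s\in(0,1)$ (slide $s\to1$ to reach $\chi_H\circ\mathrm{ev}_1$).

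The hard part will be the second bullet in the case $s=0$, where we must leave the endpoint: to enter the interior we need an interior character whose $s\to0$ limit reproduces $\phi$, i.e. characters $\chi_A,\chi_H$ with $(\chi_A\otimes\chi_H)\circ\delta=\theta$, where $\theta:=\psi\circ\delta$ is the character on $A$ attached to $\phi$. Unlike the $s=1$ limit, this condition constrains both factors through the coaction. I would resolve it by exploiting the hypothesis that $H$ admits a character: by the semigroup argument in the proof of \Cref{cor.bc} (a non-empty compact convolution sub-semigroup of the characters is a group, hence contains its identity), the counit $\varepsilon$ of $H$ is then bounded, that is, is itself a character. Taking $\chi_H=\varepsilon$ and $\chi_A=\theta$, the coaction counit identity $(\mathrm{id}\otimes\varepsilon)\circ\delta=\mathrm{id}_A$ gives $(\theta\otimes\varepsilon)\circ\delta=\theta$, so the interior family $(\theta\otimes\varepsilon)\circ\mathrm{ev}_s$ extends continuously to $\phi$ at $s=0$ and slides up to $\varepsilon\circ\mathrm{ev}_1$, a character of the desired form $\rho\circ\mathrm{ev}_1$. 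The only routine points left are verifying weak-$*$ continuity at the glued endpoints and the factorization and limit computations, none of which should present difficulty.
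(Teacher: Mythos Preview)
Your proposal is correct and follows essentially the same route as the paper: apply \Cref{prop:characterjoin} to factor $\phi=\psi\circ\mathrm{ev}_t$, then slide the evaluation parameter, using the counit $\varepsilon$ (available because $H$ has a character) to extend the $t=0$ boundary character over $A\otimes H$ and, dually, a chosen character of $A$ to handle the $t=1$ case. The only cosmetic difference is that you phrase the interior extensions via the tensor factorization $\psi=\chi_A\otimes\chi_H$ of characters on $A\otimes H$, whereas the paper writes down the extension $\Psi=\psi\circ\delta\circ(\mathrm{id}_A\otimes\varepsilon)$ directly; these are the same map, and your explicit invocation of the semigroup argument for boundedness of $\varepsilon$ fills in a step the paper leaves implicit.
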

\begin{proof}
By \Cref{prop:characterjoin}, $\phi$ is of the form $\psi \circ \text{ev}_t$, where $\psi$ is a character on $\delta(A)$ if $t = 0$, $A \otimes H$ if $0 < t < 1$, or $\bC \otimes H$ if $t = 1$. At least one of $A$ or $H$ maps into the domain of $\psi$, so at least one admits a character.

\noindent \textbf{Case I:} Suppose $H$ has a character (and hence also a counit). If $t = 1$, there is nothing left to prove. If $t \in (0, 1)$, then follow the path $\psi \circ \text{ev}_s$ for $s \in [t, 1]$. At $t = 1$, we may write the resulting character $\psi \circ \text{ev}_1$ as $\rho \circ \text{ev}_1$, for $\rho$ a character on $\bC \otimes H$, by following the embedding of $\bC \otimes H$ into $A \otimes H$. 

If $t = 0$, then $\psi$ is a character on $\delta(A)$. Because $H$ has a counit $\varepsilon$, we may compose the chain
\bes
A \otimes H \xrightarrow{\text{id}_A \otimes \varepsilon} A \xrightarrow{\delta} \delta(A) \xrightarrow{\psi} \bC
\ees
to form a character $\Psi$ on $A \otimes H$. Moreover, since
\bes
\delta \circ (\text{id}_A \otimes \varepsilon) \circ \delta = \delta \circ \text{id}_A = \delta,
\ees
it follows that $\Psi$ is actually an extension of $\psi$ from the domain $\delta(A)$ to the domain $A \otimes H$. Therefore, the original character $\phi = \psi \circ \text{ev}_0 = \Psi \circ \text{ev}_0$ may be connected via $\Psi \circ \text{ev}_s$ to a character based at $s = 1$. Similar to the above, $\Psi \circ \text{ev}_1$ may be written as $\rho \circ \text{ev}_1$ for a character $\rho$ on $\bC \otimes H$.

\noindent \textbf{Case II:} Suppose $A$ admits a character. If $t = 0$, there is nothing to prove, and if $t \in (0, 1)$, then we may again shift the character $\phi = \psi \circ \text{ev}_t$ by moving the scalar parameter to 0. If $t = 1$, then $\psi$ is a character on $\bC \otimes H$, which we may extend to a character $\Psi$ on the domain $A \otimes H$ by fixing a character $\alpha$ of $A$ and writing the composition
\bes
A \otimes H \xrightarrow{\alpha \otimes \text{id}_H} \bC \otimes H \xrightarrow{\psi} \bC.
\ees
We are then free to shift the scalar parameter of $\phi = \Psi \circ \text{ev}_1$.
\end{proof}

If $A$ is a simple quantum torus, then $A$ admits a free coaction of $C(\bZ/2\bZ)$, and $C(\bZ/2\bZ)$ certainly has characters. In this case, the join of $A$ and $C(\bZ/2\bZ)$ admits characters corresponding to those of $C(\bZ/2\bZ)$ at the $t = 1$ endpoint, but no others. When both $A$ and $H$ have characters, connectedness properties of the topological join carry over to the $C^*$-algebraic setting.

\begin{corollary}\label{cor:explicitconnectedness}
Suppose $H$ and $A$ each admit at least one character and that $\delta$ is a free coaction of $H$ on $A$. Then any character of $\join{A}{\delta}{H}$ may be connected via a path to the character $(1 \otimes \varepsilon) \circ \text{ev}_1$.
\end{corollary}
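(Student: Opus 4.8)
The plan is to reduce the statement to the previous corollary and then perform one explicit two-leg path that dips to the $t=0$ end of the join and returns. First I would record that both spectra $\mathrm{Spec}(A)$ and $\mathrm{Spec}(H)$ are nonempty by hypothesis, and that the existence of a character on $H$ forces the counit $\varepsilon\colon H\to\bC$ to be bounded (by the compact-semigroup argument already used in the proof of \Cref{cor.bc}), so that $(1\otimes\varepsilon)\circ\mathrm{ev}_1$ is a genuine character of $\join{A}{\delta}{H}$. Since $H$ admits a character, Case I of \Cref{cor:charactersiteratedjoinspaths} connects an arbitrary character $\phi$ by a path to one of the form $\rho\circ\mathrm{ev}_1$, with $\rho$ a character on $\bC\otimes H\cong H$. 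It therefore remains to connect $\rho\circ\mathrm{ev}_1$ to $(1\otimes\varepsilon)\circ\mathrm{ev}_1$.

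Second I would build that last path in two legs. Fix any character $\alpha$ of $A$ and set $\alpha\star\rho:=(\alpha\otimes\rho)\circ\delta$, a character of $A$. The first leg is $t\mapsto(\alpha\otimes\rho)\circ\mathrm{ev}_t$ run from $t=1$ down to $t=0$: at $t=1$ it equals $\rho\circ\mathrm{ev}_1$ (the $A$-slot is killed since $f(1)\in\bC\otimes H$), and at $t=0$ it factors through $\delta(A)$ and equals the character $a\mapsto(\alpha\star\rho)(a)$ read off $\delta(A)\cong A$. The second leg is $t\mapsto\bigl((\alpha\star\rho)\otimes\varepsilon\bigr)\circ\mathrm{ev}_t$ run from $t=0$ up to $t=1$, which terminates at $\bigl((\alpha\star\rho)\otimes\varepsilon\bigr)\circ\mathrm{ev}_1=(1\otimes\varepsilon)\circ\mathrm{ev}_1$. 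Concatenating these, and prepending the path from $\phi$ supplied by \Cref{cor:charactersiteratedjoinspaths}, yields the desired connection.

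The step I expect to be the crux is checking that the two legs agree at $t=0$, i.e.\ that their common endpoint is well defined. Because a character based at $t=0$ depends only on the values $f(0)\in\delta(A)$, this reduces to the identity $\bigl((\alpha\star\rho)\otimes\varepsilon\bigr)\circ\delta=\alpha\star\rho$, which is exactly the counit axiom $(\id_A\otimes\varepsilon)\circ\delta=\id_A$ (the same relation already exploited in \Cref{cor:charactersiteratedjoinspaths}). This is the whole point of the maneuver: at $t=0$ the counit lets us re-present the $\delta(A)$-character using the \emph{trivial} $H$-datum $\varepsilon$, thereby freeing the $H$-slot so that the return leg can exit at $t=1$ carrying $\varepsilon$ rather than $\rho$. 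Continuity of each leg is immediate from continuity of $t\mapsto f(t)$, and the underlying reason the construction succeeds is structural: the spectrum of $\join{A}{\delta}{H}$ is homeomorphic to the topological join $\mathrm{Spec}(A)*\mathrm{Spec}(H)$ (twisted by the induced action of the character group of $H$), and such a join is path-connected whenever both factors are nonempty. The explicit two-leg path above is a concrete witness to that connectivity terminating at the prescribed character.
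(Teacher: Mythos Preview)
Your proof is correct and uses the same ingredients as the paper's---the previous corollary together with the counit identity $(\id_A\otimes\varepsilon)\circ\delta=\id_A$---but your path has one more leg than necessary. The paper invokes Case~II of \Cref{cor:charactersiteratedjoinspaths} (since $A$ has a character) to connect $\phi$ directly to some $\psi\circ\mathrm{ev}_0$, extends $\psi$ to $\Psi$ on $A\otimes H$ via the counit, and then runs $\Psi\circ\mathrm{ev}_t$ up to $t=1$, where a short computation shows $\Psi\circ\mathrm{ev}_1=(1\otimes\varepsilon)\circ\mathrm{ev}_1$; your detour to $t=1$ first (via Case~I) and back down is harmless but redundant.
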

\begin{proof}
Since $H$ has a character, it also admits a counit $\varepsilon$. By \Cref{cor:charactersiteratedjoinspaths} and its proof, any character $\phi$ on $\join{A}{\delta}{H}$ may be connected to a character of the form $\psi \circ \text{ev}_0 = \Psi \circ \text{ev}_0$, where $\psi$ is a character on $\delta(A)$ which extends to a character $\Psi$ on $A \otimes H$ through the composition
\bes
A \otimes H \xrightarrow{\text{id}_A \otimes \varepsilon} A \xrightarrow{\delta} \delta(A) \xrightarrow{\psi} \bC.
\ees
The path $\Psi \circ \text{ev}_t$ connects the given character to $\Psi \circ \text{ev}_1$. However, from the boundary conditions of $\join{A}{\delta}{H}$, the range of $\text{ev}_1$ is $\bC \otimes H$, and
\bes
\Psi (1 \otimes h) = \psi \circ \delta \circ (\text{id}_A \otimes \varepsilon) (1 \otimes h) = \psi \circ \delta (\varepsilon(h)) = \psi(\varepsilon(h)) = \varepsilon(h),
\ees
so $\Psi \circ \text{ev}_1$ is the same character as $(1 \otimes \varepsilon) \circ \text{ev}_1$.
\end{proof}

Applying \Cref{cor:explicitconnectedness} to iterated joins of $H$ with itself, we conclude from the above that the character space of $E_n H$ is path connected if $n \geq 1$, analogous to the topological picture. On the other hand, consider a coaction of $H = C(\bZ/2\bZ)$ on a simple $C^*$-algebra $A$ once more. In $\join{A}{\delta}{C(\bZ/2\bZ)}$, the only two characters are of the form $\rho \circ \text{ev}_1$, and they are isolated from each other, so the assumption that $A$ also admits characters is necessary.

We conclude with results about cleftness of the iterated joins $E_n H$, when $H$ is finite-dimensional. In particular, we view $E_n H$ as a comodule algebra over the finite dimensional Hopf algebra $H$. Recall the following definition.

\begin{definition}\label{def.clft}
  Let $H$ be a Hopf algebra. An $H$-comodule algebra $A$ is called {\it cleft} if there exists a convolution-invertible $H$-comodule
  map $H\to A$ (which is called a \textit{cleaving map}).
\end{definition}

The convolution $\phi * \psi$ of linear maps $\phi, \psi: H \to A$ is given by the following commutative diagram.
\begin{equation}\label{eq.convolution}
  \begin{tikzpicture}[auto,baseline=(current  bounding  box.center)]
    \path[anchor=base] (0,0) node (a) {$H$} +(2,.7) node (b) {$H \otimes H$} +(4,.7) node (c) {$A \otimes A$}+(6,0) node (bh) {$A$};
         \draw[->] (a) to[bend left=20] node[pos=.7,auto] {$\scriptstyle \Delta$}  (b);
         \draw[->] (a) to[bend right=13] node[pos=.5,auto,swap] {$\scriptstyle \phi * \psi$} (bh);
         \draw[->] (b) to[bend left=8] node[pos=.5,auto] {$\scriptstyle \phi \otimes \psi$} (c);
         \draw[->] (c) to[bend left=20] node[pos=.1, auto] {$\scriptstyle \text{mult}$} (bh);
  \end{tikzpicture}
\end{equation}
As usual, $\Delta: H \to H \otimes H$ denotes the comultiplication, and $\text{mult}: A \otimes A \to A$ denotes the map which sends any elementary tensor $a \otimes b$ to $ab$. Further, the convolution identity is given by the formula
\bes
H \ni h \mapsto \varepsilon(h) 1_A \in A,
\ees
where $\varepsilon: H \to \bC$ is the counit of the Hopf algebra $H$. We abuse notation slightly by also calling the convolution identity $\varepsilon$. 

When $H = \bC \Gamma$ for a finite group $\Gamma$, an $H$-comodule morphism is simply a linear map $\phi: H \to A$ which respects the graded structure over $\Gamma$, and we need only be concerned with the specification of $\phi(\gamma) \in A_\gamma$. Further, since following \Cref{eq.convolution} shows that the convolution $\phi * \psi$ of two linear maps from $H$ to $A$ is given easily by the computation
\be\label{eq:cleftformulaindetail}
(\phi * \psi)\left( \sum_{\gamma \in \Gamma} a_\gamma \gamma \right) = \text{mult} \circ (\phi \otimes \psi) \circ \Delta \left(  \sum_{\gamma \in \Gamma} a_\gamma \gamma  \right) = \sum_{\gamma \in \Gamma} a_\gamma \hspace{.03 in} \phi(\gamma) \psi(\gamma),
\ee
and similarly for the reverse order, we conclude that the convolutions $\phi * \psi$ and $\psi * \phi$ are simultaneously equal to
\bes
\varepsilon: \bC \Gamma \ni \sum_{\gamma \in \Gamma} a_\gamma \gamma \mapsto \sum_{\gamma \in \Gamma} a_\gamma \in \bC
\ees
if and only if for each $\gamma \in \Gamma$, $\phi(\gamma)$ and $\psi(\gamma)$ are inverses under multiplication. It follows that $A$ is cleft over the Hopf algebra $H = \bC \Gamma$ if and only if for each $\gamma \in \Gamma$, there is an invertible element $U_\gamma = \phi(\gamma)$ in the $\gamma$-isotypic subspace of $A$.

Similarly, if $H$ is a finite-dimensional Hopf algebra, not necessarily of the form $\bC \Gamma$, consider any
finite-dimensional (right) $H$-comodule $V$. The right comodule
structure
\begin{equation*}
  \rho:V\to V\otimes H
\end{equation*}
amounts to a left module structure over the dual Hopf
algebra $H^*$, and hence we have a well-defined algebra morphism
\begin{equation*}
  \theta_\rho:H^*\to \mathrm{End}(V). 
\end{equation*}
We denote by $A_\rho$ the image in $\mathrm{End}(V)$ of this
morphism. We will allow ourselves the slight notational abuse of
substituting $V$ for the subscripts of $\theta_\rho$ and $A_\rho$.
The comodule structure is captured by an invertible element
\begin{equation*}
  u=u_V\in A_V\otimes H
\end{equation*}
defined implicitly by the equation
\begin{equation*}
  \theta_V(f) = (\mathrm{id}\otimes f)(u),\ \forall f\in H^*,
\end{equation*}
and the inverse of $u$ is its image through $\mathrm{id}\otimes S$, where
$S$ is the antipode of $H$.

\begin{proposition}\label{pr.2-join-clft}
  For any finite-dimensional complex Hopf algebra $H$, the join $E_1H = \join{H}{\Delta}{H}$
  is cleft over $H$.
\end{proposition}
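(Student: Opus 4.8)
The plan is to describe explicitly all convolution-invertible comodule maps $H \to \join{H}{\Delta}{H}$ and then produce one by a connectedness argument. Recall that the induced coaction on the join acts pointwise in the interval parameter, $\delta_\Delta f(t) = (\id\otimes\Delta)(f(t))$, and that the unit of $\join{H}{\Delta}{H}$ is the constant function $1\otimes 1$. First I would parametrize candidate cleaving maps: for a continuous path $t\mapsto \lambda_t\in\End(H)$ set
\[
j(h)(t) := \lambda_t(h_{(1)})\otimes h_{(2)}\in H\otimes H .
\]
A direct Sweedler computation shows that for each fixed $t$ the assignment $h\mapsto \lambda_t(h_{(1)})\otimes h_{(2)}$ is an $H$-comodule map into $H\otimes H$ (with $H$ coacting on the second leg), and in fact every comodule map $H\to H\otimes H$ arises this way. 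Hence $j$ is a comodule map into the join as soon as it lands in $\join{H}{\Delta}{H}$, which is governed by the endpoints: choosing $\lambda_0=\id$ gives $j(h)(0)=\Delta(h)\in\Delta(H)$, and choosing $\lambda_1=\varepsilon(\cdot)1_H$ gives $j(h)(1)=1\otimes h\in\bC\otimes H$. So any continuous path of endomorphisms from $\id$ to $\varepsilon(\cdot)1_H$ yields a comodule map $H\to\join{H}{\Delta}{H}$.

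Next I would pin down convolution-invertibility. Post-composition with the algebra map $\id\otimes\varepsilon:H\otimes H\to H$ carries the convolution algebra $\mathrm{Hom}(H,H\otimes H)$ to $(\End(H),*)$ and sends $j(\cdot)(t)$ to $\lambda_t$; being an algebra homomorphism, it forces each $\lambda_t$ to be $*$-invertible whenever $j$ is. Conversely, if $\lambda_t$ has convolution inverse $\lambda_t^{*-1}$ in $(\End(H),*)$, I claim that
\[
\bar\jmath(h)(t):=\lambda_t^{*-1}(h_{(2)})\otimes S(h_{(1)})
\]
is a convolution inverse of $j$: expanding $j(h_{(1)})(t)\,\bar\jmath(h_{(2)})(t)$ and using the antipode axiom to collapse the inner factor $h_{(2)}S(h_{(3)})=\varepsilon(\cdot)1$ leaves $(\lambda_t*\lambda_t^{*-1})(h)\otimes 1=\varepsilon(h)\,1\otimes 1$, the convolution unit. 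Since $\mathrm{Hom}(H,H\otimes H)$ is finite-dimensional a one-sided inverse is automatically two-sided, and because $t\mapsto\lambda_t$ is continuous and inversion is continuous on the units, $\bar\jmath$ is a continuous (hence admissible) element of $\mathrm{Hom}(H,\join{H}{\Delta}{H})$. Thus $j$ is a cleaving map precisely when every $\lambda_t$ is a unit of $(\End(H),*)$.

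It then remains to connect $\id$ to $\varepsilon(\cdot)1_H$ through units of $(\End(H),*)$. Both endpoints are units: the convolution inverse of $\id$ is the antipode $S$ (this is the $t=0$ shadow of the invertibility of the canonical $u$, whose inverse is $(\id\otimes S)(u)$), and $\varepsilon(\cdot)1_H$ is the identity element of the convolution algebra. Now $(\End(H),*)$ is a finite-dimensional complex algebra, and its group of units is exactly the locus where the determinant of left multiplication is nonzero, i.e. the complement of a hypersurface in the affine space $\End(H)\cong\bC^{(\dim H)^2}$. The complement of a complex hypersurface in affine space is connected, hence path-connected, so a continuous path $t\mapsto\lambda_t$ of units from $\id$ to $\varepsilon(\cdot)1_H$ exists. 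Feeding this path into the construction above produces a convolution-invertible comodule map $H\to\join{H}{\Delta}{H}$, establishing cleftness.

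The only genuinely delicate step is the convolution-invertibility reduction of the second paragraph: one must exhibit the inverse $\bar\jmath$ explicitly and observe that invertibility of $j$ is controlled solely by $\lambda_t$, the contribution of the second leg being absorbed by the antipode. Once this is in hand, connectedness of the unit group of a finite-dimensional complex algebra is a soft and standard fact, and it is exactly this that lets the two comodule-algebra endpoints $\Delta(H)\cong H$ and $\bC\otimes H\cong H$ of the join — each trivially cleft, via $\id$ and via $S$ respectively — be deformed into one another along the interval.
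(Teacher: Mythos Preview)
Your proof is correct and follows essentially the same strategy as the paper's own argument. Both construct the cleaving map as $\varphi_t=(\lambda_t\otimes\id)\circ\Delta$ for a path $\lambda_t$ of convolution-invertible endomorphisms of $H$ running from $\id$ to $\varepsilon(\cdot)1_H$, and both obtain that path from the connectedness of the unit group of a finite-dimensional complex algebra; the paper phrases this via the canonical element $u\in H^*\otimes H$ and a path in $GL(H^*\otimes H)$, while you work directly in $(\End(H),*)$, but these are the same algebra under the standard identification. Your explicit Sweedler verification of the convolution inverse $\bar\jmath$ matches the paper's formula $(\overline{\psi}_t\otimes S)\circ\tau\circ\Delta$. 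One small point worth making explicit (glossed over in both arguments): the inverse $\bar\jmath$ must land in $\join{H}{\Delta}{H}$, not merely in $C([0,1],H\otimes H)$, so the boundary conditions should be checked; at $t=0$ one gets $S(h_{(2)})\otimes S(h_{(1)})=\Delta(S(h))\in\Delta(H)$ and at $t=1$ one gets $1\otimes S(h)\in\bC\otimes H$, so this is fine.
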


\begin{proof}
  As per \Cref{def.clft}, we want to prove the existence of
  a convolution-invertible $H$-comodule morphism from $H$ to $\join{H}{\Delta}{H}$. To this end, we apply the preceding discussion to the right regular comodule
  structure of $H$ itself, i.e. $V=H$ regarded as a comodule via the
  comultiplication. In this case the subalgebra
  \begin{equation*}
    A_H\subset \mathrm{End}(H) 
  \end{equation*}
  is isomorphic to $H^*$ itself.

  Because $A_H\otimes H\cong H^*\otimes H$ is a finite-dimensional
  complex algebra, its group $GL(H^*\otimes H)$ of invertible elements
  is path connected. It follows that we can find a path
  \begin{equation*}
    p_t\in GL(H^*\otimes H),\quad p_0=u,\quad p_1=\mathrm{id};
  \end{equation*}
  it can be expressed as
  \begin{equation*}
    p_t = (\mathrm{id}\otimes \psi_t)(u)
  \end{equation*}
  for a path of linear maps
  \begin{equation*}
    \psi_t:H\to H,\quad \psi_0=\mathrm{id},\quad \psi_1=(\text{unit of }H)\circ\varepsilon.
  \end{equation*}

  Finally, define the path $\varphi_t:H\to H\otimes H$ by the following commutative diagram.

  \begin{equation}\label{eq:phi}
    \begin{tikzpicture}[auto,baseline=(current  bounding  box.center)]
      \path[anchor=base] (0,0) node (h)
      {$H$} +(2,.5) node (hhm) {$H\otimes H$} +(4,0) node (hh) {$H\otimes H$};
      \draw[->] (h) to[bend left=9] node[pos=.5,auto] {$\scriptstyle\Delta$} (hhm);
      \draw[->] (hhm) to[bend left=9] node[pos=.5,auto] {$\scriptstyle \psi_t\otimes\mathrm{id}$} (hh);
      \draw[->] (h) to[bend right=9] node[pos=.5,auto,swap] {$\scriptstyle \varphi_t$} (hh);
    \end{tikzpicture}
  \end{equation}
  It follows that the $\phi_t$ determine a map from $H$ to
  $\join{H}{\Delta}{H}$; this is the desired cleaving map. To verify this claim, note first that $\varphi=(\varphi_t)_t$
  is indeed a comodule map simply because both morphisms making up the
  upper half of \Cref{eq:phi} respect the $H$-comodule structure
  coming from the regular coaction on the right hand tensorand.

  Finally, we prove that $\varphi$ is convolution-invertible. First, note that $\psi_t:H\to H$ has a convolution inverse
  $\overline{\psi}_t$ defined simply by the requirement that
  \begin{equation*}
    p_t\text{ and }\overline{p}_t := (\mathrm{id}\otimes \overline{\psi}_t)(u)
  \end{equation*}
  be mutually inverse in $GL(H^*\otimes H)$.

  Next, we can define the convolution inverse $\overline{\varphi}_t$
  of $\varphi_t$ as the composition
  \begin{equation}\label{eq:phibar}
    \begin{tikzpicture}[auto,baseline=(current  bounding  box.center)]
      \path[anchor=base] (0,0) node (h)
      {$H$} +(2,.5) node (hhm) {$H\otimes H$} +(4,.5) node (hh2) {$H\otimes H$} +(6,0) node (hh) {$H\otimes H$};      
      \draw[->] (h) to[bend left=9] node[pos=.5,auto] {$\scriptstyle\Delta$} (hhm);
      \draw[->] (hhm) to[bend left=6] node[pos=.5,auto] {$\scriptstyle\tau$} (hh2);      
      \draw[->] (hh2) to[bend left=9] node[pos=.5,auto] {$\scriptstyle \overline{\psi}_t\otimes S$} (hh);
      \draw[->] (h) to[bend right=12] node[pos=.5,auto,swap] {$\scriptstyle \overline{\varphi}_t$} (hh);
    \end{tikzpicture}
  \end{equation}
  where $\tau$ denotes the flip.
\end{proof}
\begin{remark}
It is crucial to note that in the above, there is no claim that the cleaving map respects the adjoint operation. In some cases, compatibility with the adjoint operation is impossible, as discussed in \Cref{cor.cleftad}.
\end{remark}

When $H = \bC \Gamma$ is the group algebra of a finite group $\Gamma$, cleftness properties of any join $E_n H$ may be described explicitly.
\begin{theorem}\label{thm:cleftsummary}
Fix $n \geq 2$, let $\Gamma$ be a nontrivial finite group, and define the complex Hopf algebra $H = \bC\Gamma$. Then $E_n H$ is cleft over $H$ if and only if $\Gamma$ is perfect.
\end{theorem}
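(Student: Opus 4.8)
The plan is to reduce cleftness to a statement about invertible elements in graded components, and then treat the two implications separately, invoking \Cref{thm:perfectunitaries} for sufficiency and a classical connectivity obstruction for necessity. Recall from the discussion following \Cref{def.clft} that, when $H=\bC\Gamma$, a comodule algebra $A$ is cleft over $H$ if and only if every isotypic subspace $A_\gamma$ contains an invertible element. Thus the theorem is equivalent to the assertion that $(E_n H)_\gamma$ contains an invertible element for every $\gamma\in\Gamma$ if and only if $\Gamma$ is perfect.

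For the forward (``if'') direction, suppose $\Gamma$ is a nontrivial finite perfect group. Then $\bC\Gamma$ is a finite-dimensional $C^*$-algebra, hence a finite direct sum of matrix algebras, so its unitary group is connected; together with perfectness this makes $\bC\Gamma$ strongly perfect in the sense of \Cref{def.st-perf}, exactly as recorded in the remark following that definition. \Cref{thm:perfectunitaries} then provides a unitary, and in particular an invertible, element of $(E_n H)_\gamma$ for every $\gamma$ and every $n$, so $E_n H$ is cleft.

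For the converse I would argue the contrapositive. If $\Gamma$ is not perfect, its abelianization is a nontrivial finite abelian group, so there is a surjection $q:\Gamma\to\bZ/p\bZ$ for some prime $p$. This induces a surjective Hopf $*$-algebra morphism $\pi=\bC q:\bC\Gamma\to\bC[\bZ/p\bZ]$ that intertwines the comultiplications; applied tensorand-by-tensorand, $\pi^{\otimes(n+1)}$ therefore respects each of the recursively defined boundary conditions in the simplex presentation of \Cref{eq:iteratedjoinsimplexform}. I would use this to build a unital, equivariant $*$-homomorphism $\Pi:E_n\bC\Gamma\to E_n\bC[\bZ/p\bZ]\cong C((\bZ/p\bZ)^{*(n+1)})$ which sends $(E_n\bC\Gamma)_\gamma$ into $(E_n\bC[\bZ/p\bZ])_{q(\gamma)}$. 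Choosing $\gamma$ with $q(\gamma)$ a generator of $\bZ/p\bZ$, any invertible element of $(E_n\bC\Gamma)_\gamma$ would map, since unital $*$-homomorphisms preserve invertibility, to an invertible (i.e.\ nowhere-vanishing) function in the generating isotypic subspace of $C((\bZ/p\bZ)^{*(n+1)})$, equivalently to a $\bZ/p\bZ$-equivariant map $(\bZ/p\bZ)^{*(n+1)}\to\bS^1$ for the free rotation action.

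The crux is the topological fact that rules this out. The join of $n+1$ copies of the $p$-point space is $(n-1)$-connected (see \cite[Proposition 4.4.3]{matousek} and \cite[Remark on p.68]{dold}), so for $n\geq 2$ it is simply connected. A $\bZ/p\bZ$-equivariant map from a simply connected space to $\bS^1$ lifts along the universal cover $\bR\to\bS^1$, and equivariance forces the lift to increase by $1/p$ of a full turn under the generator; iterating $p$ times and using $g^p=e$ yields the impossible relation $1+pc=0$ with $c\in\bZ$, so no such map exists. This contradiction shows $(E_n\bC\Gamma)_\gamma$ has no invertible element, so $E_n H$ is not cleft. I expect the main technical chore to be verifying that $\Pi$ is well defined and grading-preserving (the bookkeeping that $\pi^{\otimes(n+1)}$ carries each boundary condition $C_{k_1\ldots k_m}$ into its counterpart), while the simple-connectivity argument is the conceptual heart and is precisely what forces the threshold $n\geq 2$; note that for $n=1$ the two-join is always cleft by \Cref{pr.2-join-clft}, consistent with $(\bZ/p\bZ)^{*2}$ being connected but not simply connected.
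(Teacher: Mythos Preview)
Your proof is correct and follows essentially the same route as the paper's: both reduce cleftness to the existence of invertibles in each isotypic subspace, invoke \Cref{thm:perfectunitaries} for the ``if'' direction, and for the ``only if'' direction pass through the quotient $\bC\Gamma\to\bC\Lambda$ onto a cyclic group to land in the commutative picture $E_n\bC\Lambda\cong C(\Lambda^{*(n+1)})$, where an invertible homogeneous element becomes an equivariant map to $\bS^1$.

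The only noteworthy difference is in how the final topological obstruction is dispatched. The paper simply cites Dold's remark (an $(n-1)$-connected free $\bZ/k\bZ$-space admits no equivariant map to a $1$-dimensional free $\bZ/k\bZ$-space), whereas you give a self-contained covering-space argument: lift to $\bR$ using simple connectivity (here is where $n\ge 2$ enters for you), track the shift under the generator, and reach the contradiction $1+pc=0$. Your argument is more elementary and makes the threshold $n\ge 2$ transparent; the paper's citation is quicker and applies verbatim to arbitrary cyclic quotients $\bZ/k\bZ$ rather than just prime order (though your lifting argument works for any $k$ as well, since $1+kc=0$ is equally impossible). Your remark tying the $n=1$ case to \Cref{pr.2-join-clft} is a nice consistency check that the paper does not make explicit.
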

\begin{proof}
Recall from \Cref{eq:cleftformulaindetail} that if $H = \bC \Gamma$, an $H$-comodule algebra $A$ is cleft over the Hopf algebra $H$ if and only if for each $\gamma \in \Gamma$, there is an invertible element $U_\gamma \in A_\gamma$. If $\Gamma$ is perfect, then we may select invertibles $U_\gamma$ in each $(E_n H)_\gamma$ by \Cref{thm:perfectunitaries}. It follows that $E_n H$ is cleft over $H$. 

On the other hand, if $\Gamma$ is not perfect, there is a quotient $H \to K$, where $K = \bC \Lambda$ for a nontrivial cyclic group $\Lambda = \bZ/k\bZ$. If $E_n H$ is cleft over the Hopf algebra $H$, then there is an invertible element in $(E_n H)_\gamma$ for each $\gamma \in \Gamma$, and applying a pointwise quotient shows there is also an invertible element in $(E_n K)_\lambda$ for each $\lambda \in \Lambda$. Now, $E_n K$ is equivariantly isomorphic to the $C^*$-algebra of continuous complex-valued functions on the $(n-1)$-connected space $X = E_n(\bZ/k\bZ) = (\bZ/k\bZ)^{*n+1}$, where increasing connectivity of the iterated joins may be derived from \cite[Proposition 4.4.3]{matousek}. An invertible element in the generating isotypic subspace of $E_n K$ may be rescaled to produce an equivariant map $X \to \bS^1$, where $X$ is acted upon freely and diagonally by $\bZ/k\bZ$, and $\bS^1$ is acted upon freely by $\bZ/k\bZ$ through an order $k$ rotation. This contradicts the remark on page 68 of \cite{dold}, as the connectivity of $X$ is at least as big as the dimension of $\bS^1$.
\end{proof}

\Cref{thm:cleftsummary} implies that for \textit{finite} groups, \Cref{thm:perfectunitaries} is optimal for $n \geq 2$, in that the only nontrivial finite groups $\Gamma$ for which the theorem holds are perfect. We close by noting that in many cases, it is impossible to arrange for a cleaving map $H \to E_n H$ which also respects the adjoint operation. Specifically, when $H = \mathbb{C} \Gamma$, this property can be completely characterized.

\begin{corollary}\label{cor.cleftad}
Suppose $H = \bC \Gamma$ where $\Gamma$ is a nontrivial group. Then a cleaving map $H \to E_1 H$ which respects the adjoint operation exists if and only if $|\Gamma|$ is odd. Moreover, if $n \geq 2$ and $E_n H$ is cleft over $H$, then no cleaving map $H \to E_n H$ respects the adjoint operation.
\end{corollary}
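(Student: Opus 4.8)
The plan is to translate the adjoint-compatibility of a cleaving map into a concrete condition on isotypic subspaces, and then to isolate a single obstruction living at involutions. Recall from the discussion around \Cref{eq:cleftformulaindetail} that a cleaving map $\phi:H\to E_nH$ is the same thing as a choice of invertible element $U_\gamma:=\phi(\gamma)\in(E_nH)_\gamma$ for each $\gamma\in\Gamma$. Since the $*$-structure on $\bC\Gamma$ satisfies $\gamma^*=\gamma^{-1}$ and the adjoint carries $(E_nH)_\gamma$ into $(E_nH)_{\gamma^{-1}}$, the map $\phi$ respects the adjoint precisely when $U_{\gamma^{-1}}=U_\gamma^*$ for all $\gamma$. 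First I would observe that this requirement decouples: on each pair $\{\gamma,\gamma^{-1}\}$ with $\gamma\ne\gamma^{-1}$ one may pick $U_\gamma$ freely (it exists because $E_nH$ is cleft) and set $U_{\gamma^{-1}}:=U_\gamma^*$, while for $\gamma=1$ one takes $U_1=1$. Thus the only genuine constraint arises at a \emph{nontrivial involution} $\gamma=\gamma^{-1}\ne 1$, where we are forced to produce a \emph{self-adjoint} invertible element of $(E_nH)_\gamma$. The whole corollary therefore reduces to a single claim: for every nontrivial involution $\gamma$ and every $n\ge 1$, the subspace $(E_nH)_\gamma$ contains no self-adjoint invertible.

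To prove this claim I would pass to the simplex model \Cref{eq:iteratedjoinsimplexform}, in which an element of $(E_nH)_\gamma$ is a continuous $H^{\otimes n+1}$-valued function $f$ on the contractible simplex $S(n)$ whose value at the vertex $e_p$ is pinned, up to a scalar, to the symmetry
\[
v_p:=1^{\otimes p}\otimes\gamma^{\otimes(n+1-p)}\in H^{\otimes n+1}.
\]
This identification of the vertex data is the first technical step: tracking the boundary conditions $C_{k_1\ldots k_m}$ through the recursion shows that the $\gamma$-isotypic part of the vertex condition at $e_p$ is exactly $\bC v_p$. Since each $v_p$ is a self-adjoint unitary, self-adjointness and invertibility of $f$ force $f(e_p)=c_p v_p$ with $c_p\in\bR\setminus\{0\}$. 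Now a self-adjoint invertible $f$ yields a \emph{continuous} projection-valued map $P:=\chi_{(0,\infty)}(f):S(n)\to\mathrm{Proj}(H^{\otimes n+1})$; because $S(n)$ is contractible, all the projections $P(e_p)$ lie in a single path component, and hence have equal rank in each matrix block of $H^{\otimes n+1}\cong\bC[\Gamma^{n+1}]$.

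The contradiction comes from computing these ranks in a well-chosen block. Evaluating ranks in the trivial block first forces all the signs $c_p$ to agree, so I may assume $c_p>0$, making $P(e_p)$ the $(+1)$-eigenprojection of $v_p$. In the block indexed by $\pi^{\boxtimes(n+1)}$ for an irreducible $\pi$ of $\Gamma$, the identity on the first $p$ factors and the formula $\tfrac12(\dim+\mathrm{tr})$ for the $(+1)$-eigenspace of a symmetry give
\[
\mathrm{rank}\,P(e_p)=\tfrac12\bigl(d^{\,n+1}+d^{\,p}t^{\,n+1-p}\bigr),\qquad d:=\dim\pi,\ t:=\chi_\pi(\gamma),
\]
using $\mathrm{tr}\bigl(\pi(\gamma)^{\otimes k}\bigr)=t^{k}$. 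It then remains to choose $\pi$ with $\pi(\gamma)\ne\mathrm{id}$ (so that $t\ne d$) and $t\ne 0$; decomposing the regular character $0=\chi_{\mathrm{reg}}(\gamma)=\sum_\pi d_\pi\chi_\pi(\gamma)$ and isolating the strictly positive contribution of the representations with $\pi(\gamma)=\mathrm{id}$ produces some $\pi$ with $\pi(\gamma)\ne\mathrm{id}$ and $\chi_\pi(\gamma)<0$. For such $\pi$ the ratio of consecutive ranks is $d/t\ne 1$, so $p\mapsto d^{\,p}t^{\,n+1-p}$ is non-constant on $\{0,\dots,n\}$ (a set with at least two elements since $n\ge1$), contradicting the equal-rank conclusion. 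This establishes the claim.

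Finally I would assemble the two assertions. If $|\Gamma|$ is odd then $\Gamma$ has no nontrivial involution, so the pairing construction builds an adjoint-respecting cleaving map into the cleft algebra $E_1H$ (\Cref{pr.2-join-clft}); conversely, if $|\Gamma|$ is even then Cauchy's theorem supplies a nontrivial involution, and the claim obstructs any adjoint-respecting cleaving map, proving the first statement. For the second statement, $E_nH$ being cleft with $n\ge2$ forces $\Gamma$ to be perfect by \Cref{thm:cleftsummary}; a nontrivial perfect group is non-solvable, hence of even order by the Feit--Thompson theorem, so again a nontrivial involution exists and the claim rules out any adjoint-respecting cleaving map. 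The main obstacle I anticipate is the first technical step---correctly reading off the pinned vertex values $v_p$ from the nested boundary conditions---together with checking that the homotopy/rank argument is unaffected by the higher-dimensional face conditions, which it is, since only the vertex values enter.
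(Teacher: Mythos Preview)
Your proof is correct, and the overall architecture matches the paper's: both reduce to showing that for a nontrivial involution $\gamma$ and any $n\ge 1$, the isotypic subspace $(E_nH)_\gamma$ contains no self-adjoint invertible, and both deduce the two statements from this claim exactly as you do (pairing for odd $|\Gamma|$; Cauchy for even $|\Gamma|$; perfect $\Rightarrow$ even via Feit--Thompson for $n\ge 2$).

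Where you diverge is in the proof of the key claim. The paper does not work in $H^{\otimes(n+1)}$ at all. For $n=1$ it simply applies the $*$-homomorphism $\id\otimes\varepsilon$ pointwise to a hypothetical self-adjoint invertible $U_\gamma\in(E_1H)_\gamma$, producing a continuous path of self-adjoint invertibles in $H=\bC\Gamma$ joining a nonzero real multiple of $\gamma$ (whose spectrum is $\{c,-c\}$) to a nonzero real scalar. Since the positive spectral projection has locally constant rank along such a path, this is impossible. For $n\ge 2$ the paper reduces to $n=1$ by a quotient $E_nH\to E_1H$. So the paper's obstruction lives in $H$, not in $H^{\otimes(n+1)}$, and needs no choice of irreducible representation or character-table argument.

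Your route---pinning down the vertex values $v_p=1^{\otimes p}\otimes\gamma^{\otimes(n+1-p)}$, forming the spectral projection on the contractible simplex, and comparing ranks in the block $\pi^{\boxtimes(n+1)}$ for an irrep with $\chi_\pi(\gamma)<0$---is valid and handles all $n\ge1$ in one stroke, avoiding the separate quotient step. The cost is the extra machinery: the recursive identification of the vertex conditions, the regular-character trick to locate a suitable $\pi$, and the block-rank computation. The paper's counit trick short-circuits all of this: applying $\varepsilon$ to all but one tensor factor collapses your rank comparison to the observation that the single self-adjoint unitary $\gamma\in\bC\Gamma$ has eigenvalues of both signs, which is immediate.
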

\begin{proof}
A cleaving map $H \to E_n H$ is determined by a selection of invertible elements $U_\gamma$ in the $\gamma$-isotypic subspace of $E_n H$. If the cleaving map respects the adjoint, then this just means $U_{\gamma^{-1}} = U_\gamma^*$.

Suppose $n = 1$ and $|\Gamma|$ is odd. Then the only element $\gamma \in \Gamma$ with $\gamma^{-1} = \gamma$ is the identity, so we may partition $\Gamma \setminus \{e\} = X_1 \cup X_2$ by splitting inverse pairs. By defining $U_\gamma \in (E_n H)_\gamma$ for $\gamma \in X_1$ first, we may ensure that $U_{\gamma^{-1}} = U_\gamma^*$ holds by design.

On the other hand, if $n = 1$ and $|\Gamma|$ is even, then there exists an order two element $\gamma \in \Gamma$, so the condition $U_{\gamma^{-1}} = U_\gamma^*$ implies that $U_\gamma$ is self-adjoint. Since $U_\gamma$ is also an invertible element of $(E_1 H)_\gamma$, application of the counit pointwise on the right tensorand produces a path of self-adjoint invertibles in $H$. This path of self-adjoint invertibles connects an element $h \in H$ with $\sigma(h) = -\sigma(h)$ to a scalar, which is a contradiction.

Similarly, if $n \geq 2$, then a cleaving map $H \to E_n H$ only exists when $\Gamma$ is perfect, which implies $|\Gamma|$ is even. Since any self-adjoint invertible in $(E_n H)_\gamma$ produces a self-adjoint invertible in $(E_1 H)_\gamma$ through a quotient, a similar argument as above implies that no cleaving map will respect the adjoint.
\end{proof}


\section*{Acknowledgments}

We are grateful to Piotr M. Hajac and Mariusz Tobolski for helpful comments. The anonymous referees' suggestions were also of great help in improving the draft.

\bibliography{invariantsrefs}{}
\bibliographystyle{plain}
\addcontentsline{toc}{section}{References}

\end{document}